\newtheorem{theorem}{Theorem}[section]
\newtheorem{lemma}[theorem]{Lemma}
\newtheorem{proposition}[theorem]{Proposition}
\numberwithin{equation}{section}
\theoremstyle{remark}
\newtheorem{remark}[theorem]{Remark}
\newcommand{\grad}{\mathop{\mathrm{grad}}}
\newcommand{\II}{\mathop{\mathrm{II}}}
\newcommand{\Ric}{\mathop{\mathrm{Ric}}}
\newcommand{\tens}{\mathop{\mathrm{tens}}}
\newcommand{\YM}{\mathop{\mathrm{YM}}}
\newcommand{\dist}{\mathop{\mathrm{dist}}\nolimits}
\newcommand{\Ad}{\mathop{\mathrm{Ad}}}
\newcommand{\End}{\mathop{\mathrm{End}}}
\newcommand{\trace}{\mathop{\mathrm{trace}}}
\newcommand{\norm}{\mathop{\mathrm{norm}}}
\newcommand{\e}{\mathrm{e}}
\title{The Li-Yau-Hamilton estimate and the Yang-Mills heat equation on manifolds with boundary}
\author{Artem Pulemotov}
\begin{document}

\maketitle

\begin{center}
Department of Mathematics, Cornell University, \\ 310 Malott Hall,
Ithaca, NY 14853-4201, USA \\ E-mail: artem@math.cornell.edu
\end{center}

\begin{abstract}
The paper pursues two connected goals. Firstly, we establish the
Li-Yau-Hamilton estimate for the heat equation on a manifold $M$
with nonempty boundary. Results of this kind are typically used to
prove monotonicity formulas related to geometric flows. Secondly, we
establish bounds for a solution~$\nabla(t)$ of the Yang-Mills heat
equation in a vector bundle over $M$. The Li-Yau-Hamilton estimate
is utilized in the proofs. Our results imply that the curvature
of~$\nabla(t)$ does not blow up if the dimension of~$M$ is less
than~4 or if the initial energy of~$\nabla(t)$ is sufficiently
small.
\end{abstract}

\section{Introduction}

The present paper considers two related subjects. Section~\ref{sec
LYH} establishes the Li-Yau-Hamilton estimate for the heat equation
on a manifold with boundary. Results of this kind are known to be
useful in the study of geometric flows. Sections~\ref{sec YMH}
and~\ref{sec exit time} discuss estimates for the solutions of the
Yang-Mills heat equation in a vector bundle over a manifold with
boundary. The proofs utilize a probabilistic technique. Our results
imply that the curvature of a solution does not blow up if the
dimension of the manifold is less than~4 or if the initial energy is
sufficiently small.

The Li-Yau-Hamilton estimate for the heat equation generalizes the
well-known differential Harnack inequality of~\cite{PLSTY86}. This
estimate was originally obtained on manifolds without boundary in
the paper~\cite{RH93a}. It is typically used to prove monotonicity
formulas related to various geometric evolution equations; see, for
example,~\cite{RH93b}. In their turn, such monotonicity formulas are
essential for establishing the existence of solutions.

Let us mention that~\cite{BCRH97} offers a constrained version of
the Li-Yau-Hamilton estimate from~\cite{RH93a}. The
paper~\cite{HDCLN05} adapts the result of~\cite{RH93a} to K\"ahler
manifolds. We point out that an inequality similar to the
Li-Yau-Hamilton estimate for the heat equation comes up in the
investigation of the Ricci flow. Its precise formulation and various
applications are presented in~\cite[Chapter~15]{BC_etal_unfinished}.
Analogous results hold for the K\"ahler-Ricci flow. Their
formulations and relevant references can be found
in~\cite[Chapter~2]{BC_etal07} and in~\cite{LN07}.

Suppose $M$ is a smooth compact Riemannian manifold without
boundary. Consider a positive solution $p(t,x)$ to the heat equation
on $M$ such that the integral $\int_Mp(t,x)\,\mathrm{d} x$ does not
exceed~1 for any $t\in(0,\infty)$. Then there exist constants $A>0$
and $B>0$ that depend only on the manifold $M$ and satisfy
\begin{align}\label{intro1}
D^2_{\cdot,\cdot}\log
p(t,x)&\ge-\left(\frac1{2t}+A\left(1+\log\left(\frac B{t^\frac
{\dim M}2p(t,x)}\right)\right)\right)\langle\cdot,\cdot\rangle,\nonumber \\
t&\in(0,1],~x\in M.
\end{align}
In this formula, $D^2_{\cdot,\cdot}$ is the second covariant
derivative, and $\left<\cdot,\cdot\right>$ is the Riemannian metric.
The inequality is to be understood in the sense of bilinear forms.
If~$M$ is Ricci parallel and has nonnegative sectional curvatures,
then~(\ref{intro1}) holds with $A=0$. This is the case when $M$ is,
for example, a sphere or a flat torus. Formula~(\ref{intro1})
constitutes the Li-Yau-Hamilton estimate for the heat equation. It
was originally obtained in~\cite{RH93a}.

Suppose now that $M$ is a smooth compact Riemannian manifold with
nonempty boundary $\partial M$. Section~\ref{sec LYH} of the present
paper establishes formula~(\ref{intro1}) in this case. The solution
$p(t,x)$ of the heat equation is assumed to satisfy the Neumann
boundary condition. Theorem~\ref{theorem LYH no crv}
proves~(\ref{intro1}) in the situation where no restrictions are
imposed on the curvature of $M$ away from $\partial M$. But the
boundary of $M$ must be totally geodesic for this result to hold.
Moreover, several derivatives of the curvature of $M$ have to vanish
at~$\partial M$. Theorem~\ref{theorem LYH crv} deals with a more
exclusive situation. It shows that inequality~(\ref{intro1}) holds
with $A=0$ if the manifold $M$ is Ricci parallel and has nonnegative
sectional curvatures. As before, $\partial M$ must be totally
geodesic. However, the previously mentioned derivatives of the
curvature of $M$ are no longer required to vanish at~$\partial M$.
Our proofs of Theorems~\ref{theorem LYH no crv} and~\ref{theorem LYH
crv} differ considerably in their techniques.

Both incarnations of estimate~(\ref{intro1}) appearing in
Section~\ref{sec LYH} play significant roles in establishing the
results of Section~\ref{sec YMH}. More precisely, they enable us to
obtain a monotonicity formula related to the Yang-Mills heat
equation. This formula is given by Lemma~\ref{lemma monot}. It helps
us establish an estimate for the solutions to the Yang-Mills heat
equation in dimensions~5 and higher.

In order to prove Theorem~\ref{theorem LYH no crv}, we employ the
doubling method. More precisely, we consider two identical copies of
$M$ and glue them together along the boundary. This procedure
produces a closed manifold $\mathcal M$. The desired estimate
follows by applying the results of the paper~\cite{RH93a} on
$\mathcal M$. Of course, several technical questions need to be
handled in order to make the doubling method work for our purpose.

The proof of Theorem~\ref{theorem LYH crv} relies on the Hopf
boundary point lemma for vector bundle sections appearing
in~\cite{Artem}. The technique we use resembles those employed
in~\cite{PLSTY86,RH93a}. One may also apply the doubling method to
prove Theorem~\ref{theorem LYH crv}. However, the approach adopted
in the present paper appears to be more effective. Firstly, it
enables us to avoid the assumption on the curvature of $M$ near
$\partial M$ that is required to carry out the doubling procedure.
Secondly, it does not rely on the previously known versions of the
Li-Yau-Hamilton estimate. Last but not least, our approach seems to
be more natural and to provide a better ground for further
generalizations.

Section~\ref{sec YMH} of the present paper deals with the Yang-Mills
heat equation in a vector bundle over a compact Riemannian manifold
$M$ with nonempty boundary. In order to describe our results, we
need to outline the setup. Let $E$ be a vector bundle over $M$.
Suppose the time-dependent connection $\nabla(t)$ in $E$ solves the
Yang-Mills heat equation
\begin{equation}\label{intro2}
\frac\partial{\partial
t}\nabla(t)=-\frac12\mathrm{d}^*_{\nabla(t)}R^{\nabla(t)},\qquad
t\in[0,T).
\end{equation}
Here and in what follows, $\mathrm{d}_{\nabla(t)}$ is the exterior
covariant derivative, $\mathrm{d}^*_{\nabla(t)}$ is its adjoint, and
$R^{\nabla(t)}$ is the curvature of $\nabla(t)$. By definition,
$R^{\nabla(t)}$ is a 2-form on $M$ with its values in the
endomorphism bundle $\End E$. The Yang-Mills heat equation is a
potentially powerful instrument for minimizing the Yang-Mills energy
functional; see, for example,~\cite{MARB82,JR92,MARBAT02}. It has a
number of applications in topology and in mathematical physics. Some
of these applications are comprehensively discussed in the
book~\cite{SDPK90} and the dissertation~\cite{LS87}; see
also~\cite{ZBMHLSCL87}. The existence of solutions is one of the
most important questions regarding the Yang-Mills heat equation.

Since $\partial M$ is assumed to be nonempty, we have to specify the
boundary conditions for the time-dependent connection~$\nabla(t)$.
Doing so is a delicate matter. As detailed in Remark~\ref{rem BCR vs
BCdel}, it is more natural for us to impose the boundary conditions
on the curvature $R^{\nabla(t)}$ than on $\nabla(t)$ itself. We
assume
\begin{equation}
\label{intro3}
\left(R^{\nabla(t)}\right)_{\tan}=0,\qquad\left(\mathrm{d}^*_{\nabla(t)}R^{\nabla(t)}\right)_{\tan}=0,\qquad
t\in[0,T).
\end{equation}
The subscript ``$\tan$" stands for the component of the
corresponding $\End E$-valued form that is tangent to $\partial M$.
Alternatively, we may assume
\begin{equation}\label{intro4}
\left(R^{\nabla(t)}\right)_{\norm}=0,\qquad\left(\mathrm{d}_{\nabla(t)}R^{\nabla(t)}\right)_{\norm}=0,\qquad
t\in[0,T).
\end{equation}
(Actually, the second equality always holds due to the Bianchi
identity.) The subscript ``$\norm$" signifies the component that is
normal to $\partial M$. Conditions~(\ref{intro3}) and~(\ref{intro4})
are analogous to the relative and the absolute boundary conditions
for real-valued forms. The results in Section~\ref{sec YMH} prevail
regardless of whether we choose~(\ref{intro3}) or~(\ref{intro4}) to
hold on $\partial M$. Other ways to introduce the boundary
conditions in the context of Yang-Mills theory were considered in
several works including, for
example,~\cite{AM92,JSGS94,JTJS97,WEG06,LGNCpre}. We should mention,
however, that none of these works except~\cite{LGNCpre} deals with
parabolic-type equations like~(\ref{intro2}). The relationship
between the boundary conditions utilized in the present paper and
the boundary conditions appearing elsewhere is discussed in
Remark~\ref{rem BC relation}.

Section~\ref{sec YMH} provides estimates for the curvature
$R^{\nabla(t)}$ of the solution $\nabla(t)$ to the Yang-Mills heat
equation~(\ref{intro2}) subject to~(\ref{intro3}) or~(\ref{intro4}).
Roughly speaking, we show that $R^{\nabla(t)}$ is bounded at every
point of $M$ by expressions involving the initial energy of
$\nabla(t)$. Theorem~\ref{thm YMH dim23} considers the case where
the dimension of $M$ is either~2 or~3. It yields an estimate on
$R^{\nabla(t)}$ and demonstrates that $R^{\nabla(t)}$ does not blow
up. Theorem~\ref{thm YMH dim4} deals with the case where the
dimension is equal to~4. It requires that the initial energy of
$\nabla(t)$ be smaller than a constant depending on~$M$. If this
assumption is satisfied, the theorem produces a bound on
$R^{\nabla(t)}$. It is easy to see that $R^{\nabla(t)}$ does not
blow up when this bound holds. Theorem~\ref{thm YMH dim5+} considers
the situation where the dimension of $M$ is greater than or equal
to~5. It produces an estimate on $R^{\nabla(t)}$ under a rather
sophisticated condition. The theorem implies that the curvature of a
solution to Eq.~(\ref{intro2}) cannot blow up after time~$\rho$ if
the initial energy is smaller than a number depending on $\rho$.

When the dimension of $M$ equals~2, 3, or~4, the boundary $\partial
M$ has to be convex for the results in Section~\ref{sec YMH} to
hold. No other assumptions on the geometry of $M$ are required.
However, if the dimension is~5 or higher, the situation is
different. In this case, $\partial M$ has to be totally geodesic,
and restrictions have to be imposed on the curvature of $M$. The
reason for such a phenomenon lies in the fact that, when the
dimension is~5 or higher, our arguments involve the Li-Yau-Hamilton
estimate~(\ref{intro1}). Both Theorems~\ref{theorem LYH no crv}
and~\ref{theorem LYH crv} are exploited.

We thus observe a trichotomy in the behavior of the
solution~$\nabla(t)$ to Eq.~(\ref{intro2}). Theorems~\ref{thm YMH
dim23}, \ref{thm YMH dim4}, and~\ref{thm YMH dim5+} provide three
different sets of conditions ensuring that $R^{\nabla(t)}$ does not
blow up. Each of these sets corresponds to a certain range of
dimensions of~$M$. A similar trichotomy occurs on closed manifolds;
see, for instance,~\cite{MARBAT02}. However, the difference in the
geometric assumptions that was discussed in the previous paragraph
is not observed in this case.

Let us make a comment as to the practical importance of the results
in Section~\ref{sec YMH}. Proving that the curvature does not blow
up is the principal ingredient in establishing the long-time
existence of solutions to the Yang-Mills heat equation. The list of
relevant references includes but is not limited
to~\cite{SDPK90,JR92,MS94,MARBAT02,LGNCpre}. We should point out
that all these works except~\cite{LGNCpre} restrict their attention
to manifolds without boundary.

The proofs of Theorems~\ref{thm YMH dim23}, \ref{thm YMH dim4},
and~\ref{thm YMH dim5+} rely on the probabilistic technique
developed in~\cite{MARBAT02}. The origin of this technique lies in
the theory of harmonic maps; see~\cite{AT96}. The pivotal stochastic
process in our considerations is a reflecting Brownian motion on the
manifold $M$. Let us mention that the probabilistic approach to
Yang-Mills theory was investigated rather extensively. The
paper~\cite{MARBAT02} contains a series of results and a list of
references on the subject.

While establishing the theorems in Section~\ref{sec YMH}, we prove a
noteworthy property of $\End E$-valued forms on $M$. The precise
phrasing of this property is given by Lemma~\ref{lemma Neumann
norm}. Roughly speaking, it states that, if $\partial M$ is convex
and an $\End E$-valued form $\phi$ satisfies~(\ref{intro3})
or~(\ref{intro4}), then the derivative of the squared absolute value
of $\phi$ in the direction of the outward normal to $\partial M$
must be nonpositive. A simpler version was established
in~\cite{LGNCpre}.

Section~\ref{sec exit time} of the present paper provides an exit
time estimate for a reflecting Brownian motion on a manifold with
convex boundary. This result helps us prove another inequality for
the curvature of the connection~$\nabla(t)$ discussed above.

\section{The Li-Yau-Hamilton estimate}\label{sec LYH}

Consider a smooth, compact, connected, oriented, $n$-dimensional
Riemannian manifold $M$ with nonempty boundary $\partial M$. We
suppose $n\ge2$. This section aims to study the solutions of the
heat equation on $M$ with the Neumann boundary condition. More
precisely, we will obtain two versions of the Li-Yau-Hamilton
estimate for such solutions.

The Riemannian curvature tensor will be designated by $R(X,Y)Z$ when
applied to the vectors $X$, $Y$, and $Z$ from the tangent space
$T_xM$ at the point $x\in M$. We use the usual notation
\[R(X,Y,Z,W)=\left<R(X,Y)Z,W\right>,\qquad X,Y,Z,W\in T_xM.\]
The angular brackets with no lower index refer to the scalar product
in the space $T_xM$ given by the Riemannian metric. The Ricci tensor
will be written as $\Ric(X,Y)$ when applied to $X,Y\in T_xM$. We
will impose substantial assumptions on the curvature of $M$ in
Theorem~\ref{theorem LYH crv} below.

The Levi-Civita connection $D$ in the tangent bundle $TM$ induces
connections in the tensor bundles over $M$. We preserve the notation
$D$ for all of them. Our further arguments require introducing
higher-order differential operators. Let us describe the
corresponding procedure. Fix a tensor field $T$ and two or more
vector fields $Y_1,\ldots,Y_k$ on $M$. Set $D_{Y_1}^1T$ equal to
$D_{Y_1}T$. We define the $k$th covariant derivative
$D^k_{Y_1,\ldots,Y_k}T$ inductively by the formula
\[D^k_{Y_1,\ldots,Y_k}T=D_{Y_k}\left(D^{k-1}_{Y_1,\ldots,Y_{k-1}}T\right)
-\sum_{i=1}^{k-1}D^{k-1}_{Y_1,\ldots,Y_{i-1},D_{Y_k}Y_i,Y_{i+1},\ldots,Y_{k-1}}T.\]
One can verify that the value of $D^k_{Y_1,\ldots,Y_k}T$ at the
point $x\in M$ does not depend on the values of $Y_1,\ldots,Y_k$
away from $x$.

Let $\nu$ be the outward unit normal vector field on $\partial M$.
The differentiation of real-valued functions in the direction of
$\nu$ will be denoted by $\frac\partial{\partial\nu}$. If the point
$x$ lies in $\partial M$, then the space $T_xM$ contains the
subspace $T_x\partial M$ tangent to $\partial M$. We write
$\II(X,Y)$ for the second fundamental form of $\partial M$ applied
to $X,Y\in T_x\partial M$. By definition,
$\II(X,Y)=\left<D_X\nu,Y\right>$. Some of the statements below
require that $\partial M$ be totally geodesic. In this case,
$\II(X,Y)=0$ for all $X,Y\in T_x\partial M$ at every point
$x\in\partial M$.

Suppose the smooth positive function $p(t,x)$ defined on
$(0,\infty)\times M$ solves the heat equation
\begin{equation}\label{heat eq p}
\left(\frac\partial{\partial t}-\Delta_M\right)p(t,x)=0, \qquad
t\in(0,\infty),~x\in M,
\end{equation}
with the Neumann boundary condition
\begin{equation}\label{Neumann BC p}
\frac\partial{\partial\nu}p(t,x)=0, \qquad t\in(0,\infty),~x\in
\partial M.
\end{equation}
The notation $\Delta_M$ represents the Laplace-Beltrami operator on
$M$. It should be mentioned that Theorem~\ref{theorem LYH no crv}
and Remark~\ref{rem p<Ct^n2} below assume the inequality
$\int_Mp(t,x)\,\mathrm{d} x\le1$ for all $t\in(0,\infty)$. Here and
in what follows, the integration over a Riemannian manifold is to be
carried out with respect to the Riemannian volume measure on the
manifold.

We are now in a position to formulate the first result of this
section. It establishes a general version of the Li-Yau-Hamilton
estimate for the function~$p(t,x)$.

\begin{theorem}\label{theorem LYH no crv}
Let the boundary $\partial M$ be totally geodesic. Suppose the
following statements hold:
\begin{enumerate}
\item\label{assu curv bndy}
The covariant derivative
$\left(D^k_{\nu,\ldots,\nu}R\right)(\nu,X,\nu,Y)$ is equal to~0 for
all positive odd $k$ and all $X,Y\in T_xM$ at every point
$x\in\partial M$.
\item\label{assu integral}
The integral $\int_Mp(t,x)\,\mathrm{d} x$ of the solution $p(t,x)$
to the boundary value problem~(\ref{heat eq p})--(\ref{Neumann BC
p}) does not exceed~1 at any $t\in(0,\infty)$.
\end{enumerate}
Then there exist constants $A>0$ and $B>0$ independent of $p(t,x)$
such that the estimate
\begin{equation}\label{LYH no curv}D^2_{X,X}\log p(t,x)\ge-\left(\frac1{2t}
+A\left(1+\log\left(\frac B{t^\frac
n2p(t,x)}\right)\right)\right)\langle X,X\rangle
\end{equation}
holds for every $t\in(0,1]$, $x\in M$, and $X\in T_xM$. (Recall that
$n$ is the dimension of~$M$.)
\end{theorem}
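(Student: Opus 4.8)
The plan is to prove Theorem~\ref{theorem LYH no crv} by the doubling method outlined in the introduction. First I would form the doubled manifold $\mathcal M$ by gluing two copies $M_1$ and $M_2$ of $M$ along $\partial M$. Since $\partial M$ is totally geodesic, the obvious $C^0$ Riemannian metric obtained by reflection is in fact $C^1$, and one checks using the reflection symmetry that all even-order normal derivatives of the metric automatically match across $\partial M$; the role of Assumption~\ref{assu curv bndy} is precisely to force the odd-order normal derivatives of the curvature (equivalently, the relevant odd derivatives of the metric) to vanish there, so that the glued metric is actually $C^\infty$ — or at least smooth enough that the results of~\cite{RH93a} apply. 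I would spell out carefully why the listed components $\left(D^k_{\nu,\ldots,\nu}R\right)(\nu,X,\nu,Y)=0$ for odd $k$ are exactly what is needed: in geodesic normal coordinates adapted to $\partial M$ (Fermi coordinates), the obstruction to smoothness of the reflected metric lies in the odd normal derivatives of the full curvature tensor, and these are controlled by the components named in the hypothesis together with the Gauss--Codazzi relations and the fact that $\partial M$ is totally geodesic.

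Next, I would transfer the solution $p(t,x)$ to $\mathcal M$. Define $\tilde p(t,\cdot)$ on $\mathcal M$ by setting it equal to $p(t,\cdot)$ on $M_1$ and to its mirror image on $M_2$. The Neumann boundary condition~(\ref{Neumann BC p}) guarantees that $\tilde p$ is $C^1$ across $\partial M$, and the heat equation~(\ref{heat eq p}) together with the Neumann condition implies (by a standard reflection argument, differentiating the equation and using that tangential derivatives of $\partial_\nu p$ vanish on $\partial M$) that $\tilde p$ is a smooth solution of the heat equation on all of $\mathcal M$. One also has $\int_{\mathcal M}\tilde p(t,x)\,\mathrm d x = 2\int_M p(t,x)\,\mathrm d x\le 2$, so after rescaling $\tilde p$ by $1/2$ we obtain a positive solution of the heat equation on the closed manifold $\mathcal M$ whose integral is at most~$1$, which is exactly the hypothesis of the Li-Yau-Hamilton estimate of~\cite{RH93a}.

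Then I would apply~(\ref{intro1}) on $\mathcal M$ to $\frac12\tilde p$, obtaining constants $\tilde A,\tilde B>0$ depending only on $\mathcal M$ with
\begin{equation*}
D^2_{X,X}\log\!\big(\tfrac12\tilde p(t,x)\big)\ge-\left(\frac1{2t}+\tilde A\left(1+\log\left(\frac{\tilde B}{t^{n/2}\,\tfrac12\tilde p(t,x)}\right)\right)\right)\langle X,X\rangle,\qquad t\in(0,1].
\end{equation*}
Restricting to $x\in M_1\cong M$ and $X\in T_xM$, the additive constant $\log 2$ inside the logarithm gets absorbed into a redefinition of $\tilde B$, and $D^2\log(\tfrac12\tilde p)=D^2\log\tilde p=D^2\log p$ on $M_1$ since the reflection identifies the Levi-Civita connections and the metrics. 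This yields~(\ref{LYH no curv}) with $A=\tilde A$ and a suitable $B$, both independent of $p$ because they depend only on $\mathcal M$, which in turn depends only on $M$.

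The main obstacle I anticipate is the smoothness of the doubled metric: verifying rigorously that Assumption~\ref{assu curv bndy}, combined with the totally geodesic hypothesis, is sufficient to make the glued metric $C^\infty$ (and hence to legitimately invoke~\cite{RH93a}, whose proof presumably uses bounds on the curvature and finitely many of its derivatives). This requires a careful bookkeeping argument in Fermi coordinates: one must show that each odd normal derivative of every component of the metric tensor is determined by — and vanishes because of — the named curvature components and their even tangential-and-normal derivatives. A secondary technical point is confirming that the reflected function $\tilde p$ is genuinely $C^\infty$ rather than merely $C^1$ or $C^2$ across $\partial M$; this follows by repeatedly differentiating the heat equation and using parabolic regularity together with the Neumann condition, but it deserves an explicit argument. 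Once these two regularity issues are settled, the remainder of the proof is a direct transcription of the closed-manifold estimate.
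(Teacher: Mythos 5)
Your proposal matches the paper's proof essentially step for step: form the double $\mathcal M$, transfer $p$ to $\tilde p=\frac12 p$, verify that $\tilde p$ is a sufficiently regular solution of the heat equation on $\mathcal M$ with $\int_{\mathcal M}\tilde p\,\mathrm{d}x\le1$, apply Theorem~4.3 of~\cite{RH93a}, and restrict to one copy of $M$, taking $A=\tilde A$ and $B=2\tilde B$. The two technical obstacles you correctly identify are precisely the ones the paper handles---smoothness of the doubled metric is delegated to Mori's proposition in~\cite{HM91} (together with Assumption~\ref{assu curv bndy}) rather than reproved in Fermi coordinates, and regularity of $\tilde p$ across $\mathcal M^\partial$ is established by an explicit coordinate computation to order $C^2$, after which full smoothness follows from standard heat-equation regularity---so your plan is sound.
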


Conceptually, the proof consists in doubling $M$ to get a manifold
without boundary and exploiting the results of~\cite{RH93a}. A few
technical aspects need to be handled. The most essential problem is
to make sure the function to which we apply the theorem
in~\cite{RH93a} possesses the necessary differentiability
properties.

\begin{proof}
Let $\mathcal M$ be the double of $M$. More precisely, $\mathcal M$
appears as the quotient $(M\times\{1,2\})/\sim$~. The equivalence
relation $\sim$ is given as follows: Two distinct pairs, $(x,i)$ and
$(y,j)$, satisfy $(x,i)\sim(y,j)$ if and only if $x$ coincides with
$y$ and lies in $\partial M$. We preserve the notation $(x,i)$ for
the equivalence class of $(x,i)\in M\times\{1,2\}$. As described
in~\cite{HM91}, $\mathcal M$ carries the canonical smooth structure.
One may also obtain this structure by using Theorem~5.77
in~\cite{JWBRBL95} and the diffeomorphism $\mu(r,x)$ defined below.
We explain further in the proof how to introduce a local coordinate
system around $(x,i)\in\mathcal M$ when $x\in\partial M$. Note that
$\mathcal M$ is a manifold without boundary. The map $\mathcal
E_i(x)$ taking $x\in M$ to $(x,i)\in\mathcal M$ is an embedding for
both $i=1$ and $i=2$.

The Riemannian metric on $M$ induces a Riemannian metric on
$\mathcal M$ in a natural fashion. More precisely, the scalar
product $\left<X,Y\right>_{\mathcal M}$ of the vectors $X,Y\in
T_{(x,i)}\mathcal M$ is given by the formula
$\left<X,Y\right>_{\mathcal M}=\left<(\mathrm{d}\mathcal
E_i)^{-1}X,(\mathrm{d}\mathcal E_i)^{-1}Y\right>$. It is not
difficult to verify that $\left<\cdot,\cdot\right>_{\mathcal M}$ is
well-defined at every $(x,i)\in\mathcal M$. The proposition
in~\cite{HM91}, along with Assumption~\ref{assu curv bndy} of our
theorem, implies that $\left<\cdot,\cdot\right>_{\mathcal M}$
depends smoothly on $(x,i)\in\mathcal M$.

Introduce a positive function $\tilde p(t,z)$ on $(0,\infty)\times
\mathcal M$ by setting $\tilde p(t,(x,i))=\frac12p(t,x)$. Its
integral over the manifold $\mathcal M$ is bounded by 1. Our next
goal is to demonstrate that $\tilde p(t,z)$ solves the heat equation
on $\mathcal M$. This would allow us to apply the results
of~\cite{RH93a} and obtain estimate~(\ref{LYH no curv}) for this
function. Theorem~\ref{theorem LYH no crv} would then follow as a
direct consequence.

First and foremost, we need to prove that $\tilde p(t,z)$ is twice
continuously differentiable in the second variable. Consider the set
$\mathcal M^\partial\subset\mathcal M$ equal to $\mathcal
E_1(\partial M)$. Of course, this set is also equal to $\mathcal
E_2(\partial M)$. Using the smoothness of the function $p(t,x)$ on
$M$, one can easily establish the smoothness of $\tilde p(t,z)$
outside of $\mathcal M^\partial$. In consequence, it suffices to
show that $\tilde p(t,z)$ is twice continuously differentiable in a
neighborhood of an arbitrarily picked point $\tilde z\in\mathcal
M^\partial$.

There exists a unique $\tilde x\in\partial M$ satisfying $\tilde
z=\mathcal E_1(\tilde x)=\mathcal E_2(\tilde x)$. We need to
introduce local coordinates in $M$ around $\tilde x$. Suppose
$\epsilon>0$ is small enough to ensure that the mapping $\mu(r,x)$
defined on $[0,\epsilon)\times\partial M$ by the formula
$\mu(r,x)=\exp_x(-r\nu)$ is a diffeomorphism onto its image. The
existence of such an $\epsilon>0$ is justified
in~\cite[Chapter~11]{JMJS74}. Fix a coordinate neighborhood
$U^\partial$ of $\tilde x$ in the boundary $\partial M$ with a local
coordinate system $y_1,\ldots,y_{n-1}$ in~$U^\partial$ centered
at~$\tilde x$. Define the set $U$ as the image of
$[0,\epsilon)\times U^\partial$ under $\mu(r,x)$. Clearly, $U$ is a
neighborhood of $\tilde x$ in $M$. We extend $y_1,\ldots,y_{n-1}$ to
a coordinate system $x_1,\ldots,x_n$ in $U$ by demanding that the
equalities
\begin{align*}x_k(\mu(r,x))&=y_k(x),~x_n(\mu(r,x))=r,\\
r&\in[0,\epsilon),~x\in U^\partial,~k=1,\ldots,n-1,
\end{align*}
hold true; cf.~\cite{HM91}. Importantly, $\frac\partial{\partial
x_i}$ is tangent to the boundary on $U^\partial$ for every
$i=1,\ldots,n-1$. The vector field $\frac\partial{\partial x_n}$
coincides with $-\nu$ on this set.

The coordinate system $x_1,\ldots,x_n$ in $U$ gives rise to a
coordinate system $z_1,\ldots,z_n$ in the neighborhood $\mathcal
U=\mathcal E_1(U)\cup\mathcal E_2(U)$ of~$\tilde z$. Namely, suppose
$z\in\mathcal U$ equals $\mathcal E_i(x)$ with $x\in U$. Define
$z_k(z)=x_k(x)$ when $k=1,\ldots,n-1$ and $z_n(z)=(-1)^{i+1}x_n(x)$.
We will now analyze the partial derivatives of $\tilde p(t,z)$ with
respect to the newly introduced local coordinates. By doing so, we
will establish the desired differentiability properties of this
function.

It is easy to understand that $\frac{\partial}{\partial z_k}\tilde
p(t,z)$ exists and coincides with $\frac12\frac{\partial}{\partial
x_k}p(t,x)$ if $z=(x,i)\in\mathcal U$ and $k=1,\ldots,n-1$.
Furthermore, $\frac{\partial}{\partial z_k}\tilde p(t,z)$ is
continuous on $\mathcal U$ for these $k$. The situation is slightly
more complicated when we differentiate with respect to the last
coordinate. A straightforward argument shows
\[\frac{\partial}{\partial z_n}\tilde p(t,z)=\frac{(-1)^{i+1}}2\frac{\partial}{\partial x_n}p(t,x)\] when
$z=(x,i)\in\mathcal U\setminus\mathcal M^\partial$. The one-sided
derivatives $\frac{\partial^+}{\partial z_n}\tilde p(t,z)$ and
$\frac{\partial^-}{\partial z_n}\tilde p(t,z)$ coincide with
$\frac12\frac{\partial}{\partial x_n}p(t,x)$ and
$-\frac12\frac{\partial}{\partial x_n}p(t,x)$, respectively, if
$z=(x,i)\in\mathcal M^\delta$. The boundary condition~(\ref{Neumann
BC p}) ensures that $\frac{\partial}{\partial z_n}\tilde p(t,z)$ is
well-defined and equal to 0 on $\mathcal M^\partial$. We conclude
that $\frac{\partial}{\partial z_n}\tilde p(t,z)$ exists in
$\mathcal U$. Furthermore, it is continuous on $\mathcal U$.

Let us turn our attention to the second derivatives. Analogous
reasoning can be used here. The existence and the continuity of
$\frac{\partial^2}{\partial z_k\partial z_l}\tilde p(t,z)$ on
$\mathcal U$ are clear for $k=1,\ldots,n-1$ and $l=1,\ldots,n$. In
order to analyze $\frac{\partial^2}{\partial z_n\partial z_k}\tilde
p(t,z)$ with $k=1,\ldots,n-1$, observe that the formula
\[
\frac{\partial^+}{\partial z_n}\frac{\partial}{\partial z_k}\tilde
p(t,z)=\frac12\frac{\partial^2}{\partial x_n\partial
x_k}p(t,x)=\frac12\frac{\partial^2}{\partial x_k\partial
x_n}p(t,x)=0
\]
holds when $z=(x,i)\in\mathcal M^\delta$. A similar calculation
suggests the equality $\frac{\partial^-}{\partial
z_n}\frac{\partial}{\partial z_k}\tilde p(t,z)=0$ on $\mathcal
M^\delta$. As a consequence, $\frac{\partial^2}{\partial z_n\partial
z_k}\tilde p(t,z)$ is well-defined and continuous on $\mathcal U$.
The same can be said about $\frac{\partial^2}{\partial z_n^2}\tilde
p(t,z)$. Indeed, the formula
\[
\frac{\partial^2}{\partial z_n^2}\tilde
p(t,z)=\frac{(-1)^{2i+2}}2\frac{\partial^2}{\partial
x_n^2}p(t,x)=\frac12\frac{\partial^2}{\partial x_n^2}p(t,x)
\] holds when $z=(x,i)\in\mathcal U$.

Summarizing the arguments above, we arrive at the following verdict:
The function $\tilde p(t,z)$ is twice continuously differentiable in
$z$ on the manifold $\mathcal M$. The smoothness of $\tilde p(t,z)$
in $t$ is evident. With this in mind, one can readily verify that
the heat equation
\begin{equation}\label{heat mathcal M}
\left(\frac\partial{\partial t}-\Delta_\mathcal M\right)\tilde
p(t,z)=0, \qquad t\in(0,\infty),~z\in\mathcal M,\end{equation} is
satisfied ($\Delta_\mathcal M$ denoting the Laplace-Beltrami
operator on $\mathcal M$). In addition, the integral of $\tilde
p(t,z)$ over $\mathcal M$ is bounded by~1. These observations enable
us to apply Theorem~4.3 of~\cite{RH93a}. As a result, we get the
existence of constants $\tilde A>0$ and $\tilde B>0$ such that
\[
\tilde D^2_{X,X}\log\tilde p(t,z)\ge-\left(\frac1{2t} +\tilde
A\left(1+\log\left(\frac{\tilde B}{t^\frac n2\tilde
p(t,z)}\right)\right)\right)\langle X,X\rangle\] for every
$t\in(0,1]$, $z\in\mathcal M$, and $X\in T_z\mathcal M$. Here,
$\tilde D^2_{X,X}$ refers to the second covariant derivative given
by the Levi-Civita connection in~$T\mathcal M$. Inequality~(\ref{LYH
no curv}) follows immediately with $A=\tilde A$ and $B=2\tilde B$.
 \end{proof}

\begin{remark}
As in the proof of Theorem~\ref{theorem LYH no crv}, let $\mathcal
M$ be the double of the manifold~$M$. Given $z\in\mathcal M$, the
tangent space $T_z\mathcal M$ carries a natural scalar product
induced by the Riemannian metric on $M$. This scalar product depends
smoothly on $z\in\mathcal M$ if and only if the boundary $\partial
M$ is totally geodesic and Assumption~1 of Theorem~\ref{theorem LYH
no crv} is fulfilled. The justification of this fact can be found
in~\cite{HM91}.
\end{remark}

\begin{remark}
Since the function $\tilde p(t,z)$ appearing in the proof
satisfies~(\ref{heat mathcal M}), it must be smooth on
$(0,\infty)\times\mathcal M$. In order to verify this, one may use
the uniqueness and the integral representation of solutions to the
heat equation; see, e.g.,~\cite[Proposition~4.1.2]{EH02b}.
\end{remark}

\begin{remark}
Estimate~(\ref{LYH no curv}) means that $D^2_{\cdot,\cdot}\log
p(t,x)$ is greater than or equal to
\[-\left(\frac1{2t}+A\left(1+\log\left(\frac B{t^\frac
n2p(t,x)}\right)\right)\right)\langle\cdot,\cdot\rangle\] in the
sense of bilinear forms for every $t\in(0,1]$ and $x\in M$.
\end{remark}

\begin{remark}\label{rem p<Ct^n2}
If Assumption~\ref{assu integral} of Theorem~\ref{theorem LYH no
crv} is fulfilled, then there exists a constant $C>0$ independent of
$p(t,x)$ such that
\begin{equation}\label{p<Ct^-n/2}
p(t,x)\le Ct^{-\frac n2},\qquad t\in(0,1],~x\in M.
\end{equation}
Note that $\partial M$ does not have to be totally geodesic for this
to hold. In the case where $p(t,x)$ tends to a delta function as $t$
tends to~0, formula~(\ref{p<Ct^-n/2}) follows from the parametrix
construction for the Neumann heat kernel. This observation was made
in~\cite[Proof of Lemma~3.2]{EH02a}. We also refer to~\cite{JW97}
for relevant results. In the general case, formula~(\ref{p<Ct^-n/2})
can be established by using the integral representation of the
solution to the heat equation; see,
e.g.,~\cite[Proposition~4.1.2]{EH02b}. Importantly, if all the
assumptions of Theorem~\ref{theorem LYH no crv} are fulfilled and
$C$ satisfies~(\ref{p<Ct^-n/2}), then there exists a constant
$A_C>0$ such that~(\ref{LYH no curv}) holds with $A=A_C$ and $B=C$.
\end{remark}

We now state a more specific version of the Li-Yau-Hamilton estimate
for the function~$p(t,x)$. It shows how~(\ref{LYH no curv})
simplifies when the appropriate curvature restrictions are imposed
on~$M$ away from the boundary. Note that the inequality
$\int_Mp(t,x)\mathrm{d} x\le1$ is no longer required for our
arguments.

\begin{theorem}\label{theorem LYH crv}
Let the boundary $\partial M$ be totally geodesic. Suppose the
following statements hold at every point $x\in M$:
\begin{enumerate}
\item\label{assu Ricci par}
The covariant derivative $(D_X\Ric)(Y,Z)$ is equal to 0 for all
$X,Y,Z\in T_xM$.
\item\label{assu nneg sect}
The sectional curvature of every plane in $T_xM$ is nonnegative.
That is, $R(X,Y,Y,X)\ge0$ for all $X,Y\in T_xM$.
\end{enumerate}
Then the solution $p(t,x)$ of the boundary value problem~(\ref{heat
eq p})--(\ref{Neumann BC p}) satisfies the inequality
\begin{equation}\label{LYH crv}
D^2_{X,X}\log p(t,x)\ge-\frac1{2t}\langle X,X\rangle\end{equation}
for every $t\in(0,\infty)$, $x\in M$, and $X\in T_xM$.
\end{theorem}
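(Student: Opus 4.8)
The plan is to adapt the classical Li-Yau-Hamilton argument of \cite{RH93a,PLSTY86} directly on $M$, without doubling, using the Hopf boundary point lemma for vector bundle sections from \cite{Artem} to control the behavior at $\partial M$. First I would introduce the tensor
\[
Z_{X,X}(t,x)=D^2_{X,X}\log p(t,x)+\frac1{2t}\langle X,X\rangle,
\]
viewed as a section of the bundle $\mathrm{Sym}^2(T^*M)$, and aim to show $Z\ge0$ as a bilinear form for all $t>0$. The strategy is a tensor maximum principle: compute the evolution equation satisfied by $Z$ under the heat flow, show that $\frac{\partial}{\partial t}Z-\Delta_M Z$ equals a sum of terms that are nonnegative whenever $Z$ has reached the boundary of the positive cone (i.e.\ terms involving a null eigenvector of $Z$), plus curvature terms that are controlled by Assumptions~\ref{assu Ricci par} and~\ref{assu nneg sect}. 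The Ricci-parallel hypothesis is what makes the Bochner-type commutator terms for $D^2\log p$ clean, and nonnegative sectional curvature (together with the trace Harnack $\Delta\log p+\frac n{2t}\ge0$, which follows from integrating) is what kills the leading curvature contribution, exactly as on closed manifolds.

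The new ingredient is the boundary analysis. Since $\partial M$ is totally geodesic, $\nu$ is parallel along $\partial M$ in the normal direction in a strong sense, and the Neumann condition $\frac{\partial}{\partial\nu}p=0$ propagates to the right derivatives of $\log p$. I would check that $\frac{\partial}{\partial\nu}Z_{X,X}\le 0$ on $\partial M$ whenever $X$ is a null eigenvector of $Z$ at a boundary point — this is precisely the hypothesis needed to run the vector-bundle Hopf lemma of \cite{Artem}, which forbids a first touching of the zero cone from happening on $\partial M$. This step uses that $\partial M$ is totally geodesic to differentiate the Neumann condition tangentially (so that second fundamental form terms drop out) and to relate $D^2_{\nu,\nu}$, $D^2_{X,X}$, and their normal derivatives without error terms coming from $\II$. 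Combined with the interior strong maximum principle, one concludes that if $Z<0$ somewhere for some $t>0$, then tracing the minimal eigenvalue backward in time contradicts either the interior evolution inequality or the boundary lemma, and since $Z_{X,X}\to+\infty$ as $t\to0^+$ (the $\frac1{2t}$ term dominates the smooth $D^2\log p$), $Z$ cannot have been negative to begin with.

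The main obstacle will be the boundary derivative computation: verifying $\frac{\partial}{\partial\nu}Z_{X,X}\le0$ at a null eigenvector requires carefully differentiating the Neumann condition twice and keeping track of which terms survive when $\partial M$ is totally geodesic, as well as handling the case where the null eigenvector $X$ is neither tangent nor normal to $\partial M$ (so one must decompose $X=X^{\tan}+X^{\perp}$ and control the cross terms). A secondary technical point is justifying the tensor maximum principle up to $t=0$ despite the singularity of $\log p$; this is handled by working on $[\tau,\infty)$ for small $\tau>0$, using that $D^2\log p$ is bounded there, and letting $\tau\to 0$, or equivalently by first establishing the estimate with $\frac1{2t}$ replaced by $\frac1{2t}+\varepsilon$ and $\langle X,X\rangle$ perturbed, then removing the perturbation. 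I expect the curvature algebra in the interior to be essentially identical to \cite{RH93a}, so the write-up can cite that computation and concentrate the new work at the boundary.
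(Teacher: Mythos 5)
Your proposal takes essentially the same route as the paper: introduce the tensor $L^\epsilon_t=(t+\epsilon)D^2\log p(t+\epsilon,\cdot)+\tfrac12\langle\cdot,\cdot\rangle$ (a time-shifted, rescaled version of your $Z$, with the perturbation absorbing the $t\to0$ singularity), derive its evolution using Ricci-parallelism, verify the inward-pointing condition of the vector-bundle Hopf lemma from~\cite{Artem} using nonnegative sectional curvature, run the boundary-derivative check, and let $\epsilon\to0$. The obstacle you flag about a null eigenvector being neither tangent nor normal is dispatched in the paper by the observation that the Neumann condition forces $\grad\log p$ to be tangent to $\partial M$, so total geodesicity gives $L^\epsilon(v,\nu)=-(t+\epsilon)\II(v,\grad\log p)=0$ for all $v\in T\partial M$; hence $\nu$ is automatically an eigenvector and a diagonalizing orthonormal basis $\{v_1,\dots,v_{n-1},\nu\}$ with each $v_i$ tangent to $\partial M$ can always be chosen.
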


In many situations, estimate~(\ref{LYH crv}) can be established by
the same technique we used to establish Theorem~\ref{theorem LYH no
crv}. One just has to exploit Corollary~4.4 in~\cite{RH93a} instead
of Theorem~4.3 in~\cite{RH93a}. However, we prefer to adduce a
direct method of proving~(\ref{LYH crv}) here based on the Hopf
lemma for vector bundle sections; see~\cite{Artem}. Firstly, because
this method does not require the equality
$\left(D^k_{\nu,\ldots,\nu}R\right)(\nu,X,\nu,Y)=0$ to hold on
$\partial M$. Secondly, because it avoids using the results
of~\cite{RH93a}. Last but not least, we believe the direct method is
more illuminating and gives a more fertile ground for
generalizations.

\begin{proof}
Take a number $\epsilon>0$. Given $t\in[0,\infty)$, introduce the
two times covariant tensor field $L^\epsilon_t$ by the formula
\[L^\epsilon_t(X,Y)=(t+\epsilon)D^2_{X,Y}\log
p(t+\epsilon,x)+\frac12\langle X,Y\rangle, \qquad X,Y\in T_xM.
\] Our plan is to use the Hopf boundary point lemma
of~\cite{Artem} for showing that $L^\epsilon_t$ is positive
semidefinite at every point of $M$. The theorem will then be proved
by taking the limit as $\epsilon$ goes to 0.

In what follows, we assume $p(t,x)$ is defined and smooth on
$[0,\infty)\times M$. This does not lead to any loss of generality.
Indeed, we can always establish the desired estimate for the
function $p_\delta(t,x)=p(t+\delta,x)$, $\delta>0$, and pass to the
limit as $\delta$ tends to 0.

Firstly, let us compute $\left(\frac\partial{\partial
t}-\Delta_{\tens}\right)L^\epsilon_t$. The Laplacian
$\Delta_{\tens}$ in this expression appears as the trace of the
second covariant derivative $D^2$ in the bundle $T^*M\otimes T^*M$.
Recall that the connection in this bundle is induced by the
Levi-Civita connection in $TM$.

The Riemannian metric on $M$ yields a scalar product of tensors over
a point $x\in M$. The notation $\langle\cdot,\cdot\rangle$ is
preserved for this scalar product. Set $P^\epsilon(t,x)=\grad\log
p(t+\epsilon,x)$. We omit the $(t,x)$ at $P^\epsilon(t,x)$ when this
does not lead to ambiguity. Introduce the mapping $\Phi(t,w)$ acting
from $[0,\infty)\times(T^*_xM\otimes T^*_xM)$ to $T^*_xM\otimes
T^*_xM$ by the equality
\begin{align*}
\Phi(t,w)(X,Y)&=2\langle\mathcal
R_{X,Y},w\rangle-\langle\iota_X\Ric,\iota_Yw\rangle
-\langle\iota_Y\Ric,\iota_Xw\rangle
\\ &\hphantom{=}~+\frac2{t+\epsilon}\langle\iota_Xw,\iota_Yw\rangle
+2(t+\epsilon)R(X,P^\epsilon,P^\epsilon,Y) \\
&\hphantom{=}~-\frac1{t+\epsilon}w(X,Y),\qquad X,Y\in T_xM.
\end{align*}
Here, the tensor $\mathcal R_{X,Y}$ is defined as $\mathcal
R_{X,Y}(Z,W)=R(X,Z,W,Y)$ for $Z,W\in T_xM$, and $\iota$ denotes the
interior product. A standard calculation, together with
Assumption~\ref{assu Ricci par} of our theorem, shows that
\[
\left(\frac\partial{\partial
t}-\Delta_{\tens}\right)L^\epsilon_t=D_{2P^\epsilon}L^\epsilon_t+\Phi(t,L^\epsilon_t),\qquad
t\in[0,\infty),
\]
at every $x\in M$. For relevant arguments,
see~\cite{RH93a,BCRH97,HDCLN05} and~\cite[Section~2.5]{BCPLLN06}.

Let $W\subset T^*M\otimes T^*M$ be the set of two times covariant,
symmetric, positive semidefinite tensors. Suppose $\epsilon$ is
chosen sufficiently small to ensure that $L_t^\epsilon$ belongs to
$W$ at every point of $M$ when $t=0$. The existence of such an
$\epsilon$ follows from the smoothness of $p(t,x)$ on
$[0,\infty)\times M$. Fixing $T>0$, we will apply Theorem~2.1
in~\cite{Artem} (the Hopf lemma) to demonstrate that $L^\epsilon_t$
must belong to $W$ at every point of $M$ for all $t\in[0,T]$.

Some more notation has to be introduced here. Given $x\in M$, define
the set $W_x$ as the intersection of $W$ with $T_x^*M\otimes
T_x^*M$. Evidently, $W_x$ is closed and convex in $T^*_xM\otimes
T^*_xM$. Let $\omega(w)$ stand for the point in $W_x$ nearest to
$w\in T_x^*M\otimes T_x^*M$. More precisely, the minimum of the
scalar product $\langle w-v,w-v\rangle$ over $v\in W_x$ must be
attained at $v=\omega(w)$. Denote $\lambda(w)=w-\omega(w)$.

We now verify the assumptions of Theorem~2.1 from~\cite{Artem}. It
was already noted that $L_t^\epsilon\in W$ at every point of $M$
when $t=0$ and that $W_x$ was closed and convex for all $x\in M$.
The set $W$ is invariant under the parallel translation in
$T^*M\otimes T^*M$; see~\cite[The arguments preceding
Corollary~10.12]{BC_etal_unfinished}. The mapping $\Phi(t,w)$,
obviously, satisfies inequality~(2.1) in~\cite{Artem}. Thus,
Requirement~2 of Theorem~2.1 in that paper remains the only
statement to be checked. Considering Remark~2.1 of~\cite{Artem}, it
suffices to prove the inequality
\begin{equation}\label{point inward}\langle\Phi(t,\omega(L^\epsilon_t)),\lambda(L^\epsilon_t)\rangle\le0,\qquad t\in[0,T],\end{equation}
over every point of $M$.

Fix $t\in[0,T]$. We omit the subscript $t$ at $L^\epsilon_t$ in
order to simplify the notation. Pick an orthonormal basis
$\{e_1,\ldots,e_n\}$ of the space $T_x M$ for some $x\in M$. Without
loss of generality, suppose this basis diagonalizes $L^\epsilon$ at
$x$. One can easily understand that
\begin{align*}
\omega(L^\epsilon)(e_i,e_j)&=\max\{L^\epsilon(e_i,e_j),0\},
\\ \lambda(L^\epsilon)(e_i,e_j)&=\min\{L^\epsilon(e_i,e_j),0\},\qquad i,j=1,\ldots,n.
\end{align*}
Hence
\[\langle\Phi(t,\omega(L^\epsilon)),\lambda(L^\epsilon)\rangle=\sum_{i=1}^n
\Phi(t,\omega(L^\epsilon))(e_i,e_i)\min\{L^\epsilon(e_i,e_i),0\}.
\]
If $L^\epsilon(e_i,e_i)<0$, then $\omega(L^\epsilon)(e_i,e_j)=0$ for
all $j=1\ldots,n$. Using this fact along with our
Assumption~\ref{assu nneg sect}, one can readily prove that
\[\Phi(t,\omega(L^\epsilon))(e_i,e_i)\ge0
\]
when $L^\epsilon(e_i,e_i)<0$. Thus, estimate~(\ref{point inward})
holds true.

We are now in a position to apply Theorem~2.1 of~\cite{Artem}. More
precisely, we apply Corollary~2.3 of that theorem. Let us establish
the equality $\langle\lambda(L^\epsilon_t),D_\nu
L^\epsilon_t\rangle=0$ over an arbitrarily chosen point
$x\in\partial M$ for all $t\in[0,T]$. This would lead us to the
conclusion that $L^\epsilon_t$ is always positive semidefinite.

As before, we fix $t\in[0,T]$ and write $L^\epsilon$ instead of
$L^\epsilon_t$. Pick an orthonormal basis $\{v_1,\ldots,v_{n-1}\}$
of the space $T_x
\partial M$ tangent to the boundary. Suppose this basis diagonalizes
the restriction of $L^\epsilon$ to $T_x \partial M\otimes T_x
\partial M$. A straightforward verification shows
\[ L^\epsilon(v_i,\nu)=-(t+\epsilon)\II(v_i,P^\epsilon), \qquad i=1,\ldots,n-1.
\]
(Remark that $P^\epsilon$ is tangent to $\partial M$ due to the
Neumann boundary condition~(\ref{Neumann BC p}).) The right-hand
side of the above formula is equal to 0 because $\partial M$ is
totally geodesic. Hence $L^\epsilon(v_i,\nu)=0$ for
$i=1,\ldots,n-1$. We conclude that the orthonormal basis
$\{v_1,\ldots,v_{n-1},\nu\}$ diagonalizes $L^\epsilon$ at $x$ and
\begin{align}\label{bnd cond basis}
\langle\lambda(L^\epsilon),D_\nu
L^\epsilon\rangle=&\sum_{i=1}^{n-1}\min\{L^\epsilon(v_i,v_i),0\}\left(D_\nu
L^\epsilon\right)(v_i,v_i) \nonumber
\\ &+\min\{L^\epsilon(\nu,\nu),0\}\left(D_\nu
L^\epsilon\right)(\nu,\nu).
\end{align}

Each of the summands on the right-hand side of~(\ref{bnd cond
basis}) is 0. Indeed, since $\partial M$ is totally geodesic, we can
introduce the normal coordinates $x_1,\ldots,x_n$ around $x$ so that
$\frac\partial{\partial x_i}$ and $\frac\partial{\partial x_n}$
coincide with $v_i$ and $-\nu$, respectively, at the origin. A
calculation in these coordinates yields
\begin{align}\label{third der zero} \left(D_\nu
L^\epsilon\right)(v_i,v_i)=&-(t+\epsilon)(D_{v_i}(\iota_{P^\epsilon}\II))(v_i)
\nonumber \\ &-(t+\epsilon)\II(v_i,D_{v_i}P^\epsilon) \nonumber \\
&+(t+\epsilon)R(v_i,P^\epsilon,v_i,\nu),\qquad i=1,\ldots,n-1.
\end{align}
(The vector $D_{v_i}P^\epsilon$ is tangent to the boundary because
$\left<D_{v_i}P^\epsilon,\nu\right>=\frac1{t+\epsilon}\,L^\epsilon(v_i,\nu)=0$.)
The second fundamental form $\II$ vanishes identically. Therefore,
the first two terms in~(\ref{third der zero}) equal 0. Given
$X,Y,Z\in T_x\partial M$, it is easy to see that $R(X,Y)Z$ coincides
with the Riemannian curvature tensor of $\partial M$ applied to
these vectors. Hence $R(v_i,P^\epsilon)v_i$ is tangent to $\partial
M$, and the third term in~(\ref{third der zero}) equals 0, as well.
As a result, $(D_\nu L^\epsilon)(v_i,v_i)=0$ for $i=1\ldots,n-1$.

Another calculation (cf.~\cite{PLSTY86}) yields
\begin{align*}
(D_\nu
L^\epsilon)(\nu,\nu)&=(t+\epsilon)\frac\partial{\partial\nu}\Delta_M
\log p(t+\epsilon,x)-\sum_{i=1}^{n-1}\left(D_\nu
L^\epsilon\right)(v_i,v_i)
\\ &=(t+\epsilon)\frac\partial{\partial\nu}\Delta_M\log
p(t+\epsilon,x)=2(t+\epsilon)\II(P^\epsilon,P^\epsilon).
\end{align*} Since $\II$ vanishes identically, the above implies
$(D_\nu L^\epsilon)(\nu,\nu)=0$. In view of~(\ref{bnd cond basis}),
we conclude $\langle\lambda(L^\epsilon),D_\nu L^\epsilon\rangle$
equals 0 over our arbitrarily chosen $x\in\partial M$.

Corollary~2.3 of Theorem~2.1 in~\cite{Artem} now suggests that
$L^\epsilon_t$ is positive semidefinite at every point of $M$ for
all $t\in[0,T]$. Since no restrictions were imposed on the number
$T$, this tensor field must be positive semidefinite at every point
for all $t\in[0,\infty)$. Taking the limit as $\epsilon$ tends to 0
proves~(\ref{LYH crv}).  \end{proof}

\section{The Yang-Mills heat equation}\label{sec YMH}

This section aims to study the solutions to the Yang-Mills heat
equation in a vector bundle over the manifold $M$. Roughly speaking,
we show that the curvature of such a solution is bounded if the
dimension of~$M$ is less than~4 or if the initial energy is
sufficiently small. The proofs utilize a probabilistic method. When
the dimension of $M$ is greater than or equal to~5, our technique
requires the Li-Yau-Hamilton estimate established in
Section~\ref{sec LYH}. Notably, this reflects on the assumptions we
impose on the geometry of~$M$.

Many statements below demand that the boundary $\partial M$ be
convex. The concept of convexity is quite delicate for Riemannian
manifolds. Different definitions and the relations between them are
surveyed in~\cite{MS01}. The paper~\cite{SK79} is also relevant. In
what follows, when saying $\partial M$ is convex, we mean that the
formula \begin{equation}\label{def convex}\II(X,X)\ge0,\qquad X\in
T\partial M,\end{equation} must hold for the second fundamental form
of $\partial M$.

The next few paragraphs provide a description of the structure
required to formulate the Yang-Mills heat equation. For a detailed
exposition of the background material,
see~\cite{JPBBL81,BL85,JJ88,SDPK90,DFKU84}.

Recall that the manifold $M$ is assumed to be compact. Let $E$ be a
vector bundle over $M$ with the standard fiber $\mathbb R^d$ and the
structure group $G$. We suppose $G$ appears as a Lie subgroup of
$\mathrm O(d)$ and acts naturally on $\mathbb R^d$. The symbol
$\mathfrak g$ stands for the Lie algebra of $G$. In what follows, we
assume $\mathbb R^d$ is equipped with the standard scalar product.
Every element of $\mathfrak g$ appears as a skew-symmetric
endomorphism of $\mathbb R^d$. Define the scalar product in this Lie
algebra by the formula
\[
\left<A,B\right>_{\mathfrak g}=-\trace AB,\qquad A,B\in\mathfrak g.
\]
The adjoint bundle $\Ad E$, whose standard fiber is equal to
$\mathfrak g$, carries the fiber metric induced by~$\left<\cdot,
\cdot\right>_\mathfrak g$.

Let $\nabla$ be a connection in~$E$. We understand $\nabla$ as a
mapping that takes a section $\tau$ of $E$ to a section $\nabla\tau$
of the bundle $T^*M\otimes E$. It is customary to interpret
$\nabla\tau$ as an $E$-valued 1-form on the manifold $M$. Consider a
vector field $X$ on $M$. We write $\nabla_X\tau$ to indicate the
application of $\nabla\tau$ to $X$. Given a smooth real-valued
function $f(x)$ on $M$, the formula
\[
\nabla_X(f\tau)=(Xf)\tau+f\nabla_X\tau
\]
must be satisfied. We suppose $\nabla$ is compatible with the
structure group $G$. The curvature of $\nabla$ will be denoted by
$R^{\nabla}$. Let us mention that $R^{\nabla}$ appears as a 2-form
on $M$ with its values in the bundle $\Ad E$. Our goal is to write
down the Yang-Mills heat equation. In order to do this, we need to
introduce the operators of covariant exterior differentiation
corresponding to a connection in~$E$.

Consider the bundle $\Lambda^pT^*M\otimes\Ad E$ for a nonnegative
integer $p$. Its sections are interpreted as $\Ad E$-valued
$p$-forms on the manifold $M$. The set of all these sections will be
designated by $\Omega^p(\Ad E)$. The Riemannian metric on $M$ and
the fiber metric in $\Ad E$ give rise to a scalar product in the
fibers of $\Lambda^pT^*M\otimes\Ad E$. We use the notation
$\left<\cdot,\cdot\right>_E$ for this scalar product and the
notation $|\cdot|_E$ for the corresponding norm.

The connections $D$ in $TM$ and $\nabla$ in $E$ induce a connection
in the bundle $\Lambda^pT^*M\otimes\Ad E$. It appears as a mapping
from $\Omega^p(\Ad E)$ to the set of sections of
$T^*M\otimes\Lambda^pT^*M\otimes\Ad E$. We preserve the notation
$\nabla$ for this connection in $\Lambda^pT^*M\otimes\Ad E$. Define
the operator $\mathrm{d}_{\nabla}$ acting from $\Omega^p(\Ad E)$ to
$\Omega^{p+1}(\Ad E)$ by the formula
\begin{align*}
\left(\mathrm{d}_{\nabla}\phi\right)&(X_1,\ldots,X_{p+1}) \\ &=
\sum_{i=1}^{p+1}(-1)^{i+1}\left(\nabla_{X_i}\phi\right)(X_1,\ldots,
X_{i-1},X_{i+1},\ldots,X_{p+1}).
\end{align*}
Here, $\phi$ belongs to $\Omega^p(\Ad E)$, and $X_1,\ldots,X_{p+1}$
belong to $T_xM$ for some $x\in M$. It is easy to understand that
$\mathrm{d}_{\nabla}$ plays the role of the covariant exterior
derivative corresponding to $\nabla$. The operator
$\mathrm{d}_{\nabla}^*$ acting from $\Omega^{p+1}(\Ad E)$ to
$\Omega^p(\Ad E)$ is defined by the equality
\[
\left(\mathrm{d}_{\nabla}^*\psi\right)(X_1,\ldots,X_p) =
-\sum_{i=1}^n\left(\nabla_{e_i}\psi\right)(e_i,X_1,\ldots,X_p).
\]
Here, $\psi$ belongs to $\Omega^{p+1}(\Ad E)$, the vectors
$X_1,\ldots,X_p$ belong to $T_xM$ for some $x\in M$, and
$\{e_1,\ldots,e_n\}$ is an orthonormal basis of $T_xM$. We set
$\mathrm{d}^*_{\nabla}$ to be equal to zero on $\Omega^0(\Ad E)$. In
view of Lemma~\ref{lemma int parts} below, this operator may be
understood as the formal adjoint of $\mathrm{d}_{\nabla}$.

Fix a number $T>0$. Consider a connection~$\nabla(t)$ in~$E$
depending on $t\in[0,T)$. The parameter $t$ will be interpreted as
time. We require that $\nabla(t)$ be compatible with the structure
group $G$ for all $t\in[0,T)$. Suppose $\nabla(t)$ satisfies the
Yang-Mills heat equation
\begin{equation}\label{YM heat eq}
\frac\partial{\partial
t}\nabla(t)=-\frac12\mathrm{d}^*_{\nabla(t)}R^{\nabla(t)},\qquad
t\in[0,T).
\end{equation}
In particular, this connection must be once continuously
differentiable in $t\in[0,T)$. The factor~$\frac12$ appears in the
right-hand side because we want to achieve maximum conformity with
the probabilistic results employed below. In interpreting
$\frac\partial{\partial t}\nabla(t)$, one should remember that
$\nabla(t)$ lies, for each $t\in[0,T)$, in the linear space of
mappings taking sections of $E$ to sections of $T^*M\otimes E$. Our
next step is to specify the boundary conditions for~$\nabla(t)$.
Doing this is quite a delicate matter. We discuss some of the
nuances in Remarks~\ref{rem BCR vs BCdel} and~\ref{rem BC relation}
in the end of this section.

Every $\Ad E$-valued $p$-form $\phi\in\Omega^p(\Ad E)$ can be
decomposed into the sum of its the tangential component
$\phi_{\tan}$ and its normal component $\phi_{\norm}$ on the
boundary of $M$. Roughly speaking, $\phi_{\tan}$ coincides with the
restriction of $\phi$ to the vectors from $T\partial M$. If $\phi$
lies in $\Omega^0(\Ad E)$, then $\phi_{\tan}$ equals $\phi$ on
$\partial M$. We are now ready to impose the boundary conditions on
$\nabla(t)$. Assume the equalities
\begin{equation}\label{relative BC}
\left(R^{\nabla(t)}\right)_{\tan}=0,\qquad\left(\mathrm{d}^*_{\nabla(t)}R^{\nabla(t)}\right)_{\tan}=0
\end{equation} hold on $\partial M$ for all $t\in[0,T)$. One should view~(\ref{relative BC}) as a
version of the relative boundary conditions on real-valued forms;
see, for example,~\cite{DRIS71}. Alternatively, we may assume the
formulas
\begin{equation}\label{absolute BC}
\left(R^{\nabla(t)}\right)_{\norm}=0,\qquad\left(\mathrm{d}_{\nabla(t)}R^{\nabla(t)}\right)_{\norm}=0
\end{equation} hold on $\partial M$ for all $t\in[0,T)$. (Actually, the second one is always satisfied due to the Bianchi
identity.) These should be viewed as a version of the absolute
boundary conditions; again,~\cite{DRIS71} is a good reference. The
arguments in the present paper will prevail regardless of whether we
choose Eqs.~(\ref{relative BC}) or Eqs.~(\ref{absolute BC}) to hold
on $\partial M$. For other problems and techniques, however, only
one of the choices may be appropriate.

We should make an important comment at this point. In essence,
Eqs.~(\ref{relative BC}) and~(\ref{absolute BC}) are restrictions on
the curvature form~$R^{\nabla(t)}$. Another possible strategy is to
impose the boundary conditions directly on the
connection~$\nabla(t)$. We postpone a discussion of this issue until
after the proofs of our results; see Remarks~\ref{rem BCR vs BCdel}
and~\ref{rem BC relation}.

Introduce the function
\[
\YM(t)=\int_M\left|R^{\nabla(t)}\right|_E^2\mathrm{d} x\] for
$t\in[0,T)$. In accordance with the conventions of Section~\ref{sec
LYH}, the integration is to be carried out with respect to the
Riemannian volume measure on $M$. It is reasonable to call $\YM(t)$
the energy at time $t$. A standard argument involving
Lemma~\ref{lemma int parts} below shows that $\YM(t)$ is
non-increasing in $t\in[0,T)$; see~\cite{LGNCpre} and also, for
example,~\cite{JJ88,JR92,YCCLS94}.

We now state the main results of Section~\ref{sec YMH}. Our first
theorem concerns the lower-dimensional case. It offers a bound for
$R^{\nabla(t)}$ in terms of the initial energy $\YM(0)$ and
demonstrates that $R^{\nabla(t)}$ does not blow up at time $T$. In
what follows, the notation $R^{\nabla(t)}(x)$ refers to the
curvature of $\nabla(t)$ at the point $x\in M$.

\begin{theorem}\label{thm YMH dim23}
Let the dimension $\dim M$ equal~2 or~3. Suppose~$\partial M$ is
convex in the sense of~(\ref{def convex}). Then the solution
$\nabla(t)$ of Eq.~(\ref{YM heat eq}), subject to the boundary
conditions~(\ref{relative BC}) or~(\ref{absolute BC}), satisfies the
estimate
\begin{equation}\label{bound ex23}
\sup_{x\in
M}\left|R^{\nabla(\rho)}(x)\right|^2_E\le\max\left\{\frac{4\YM(0)}{\rho^2}\,,
\theta_1\e^{\theta_2\sqrt{\YM(0)}}\YM(0)\right\}
\end{equation}
for all $\rho\in(0,T)$. Here, $\theta_1>0$ and $\theta_2>0$ are
constants depending only on the manifold $M$.
\end{theorem}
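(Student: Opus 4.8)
The plan is to run the probabilistic representation of $R^{\nabla(t)}$ along a reflecting Brownian motion on $M$, exactly in the spirit of the technique of~\cite{MARBAT02} and~\cite{AT96}, and to control the resulting expression using the convexity of $\partial M$. The starting point is the Weitzenb\"ock-type identity satisfied by $\phi(t)=R^{\nabla(t)}$: along the Yang-Mills heat flow one has $\frac{\partial}{\partial t}\phi=-\frac12\bigl(\nabla^*\nabla\phi+\mathcal{W}\phi\bigr)$, where $\mathcal{W}$ is a zeroth-order term built from the Riemann and Ricci curvatures of $M$ together with a quadratic term in $\phi$ coming from the curvature of $\nabla(t)$. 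Fixing $\rho\in(0,T)$ and a point $x\in M$, I would let $(X_s)_{s\ge0}$ be a reflecting Brownian motion on $M$ started at $x$, with boundary local time $\ell_s$, and let $Q_s$ be the solution of the (matrix) ODE along the paths that incorporates both the parallel transport $/\!/_s$ for $\nabla(t)$ and the zeroth-order term $\mathcal{W}$, so that $s\mapsto Q_s^{-1}\bigl(/\!/_s\bigr)^{-1}\phi(\rho-s,X_s)$ becomes (the boundary terms aside) a local martingale. Evaluating the expectation at $s=\rho$ and at $s=0$ gives a formula expressing $|R^{\nabla(\rho)}(x)|_E$ as the expectation of $|R^{\nabla(0)}(X_\rho)|_E$ weighted by the propagator $Q$, \emph{plus} a boundary contribution accumulated against $d\ell_s$.

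The crucial structural input is Lemma~\ref{lemma Neumann norm}: because $\partial M$ is convex and $R^{\nabla(t)}$ satisfies~(\ref{relative BC}) or~(\ref{absolute BC}), the derivative of $|R^{\nabla(t)}|_E^2$ in the outward normal direction is nonpositive on $\partial M$. This is precisely what is needed to make the boundary local-time integral work in our favor rather than against us: when I apply It\^o's formula to $s\mapsto |/\!/_s^{-1}\phi(\rho-s,X_s)|_E^2$ (or to the comparison function $e^{cs}|\phi(\rho-s,X_s)|_E^2$ with $c$ chosen to dominate $\mathcal{W}$ and the Ricci term), the $d\ell_s$ term has the sign $\frac{\partial}{\partial\nu}|\phi|_E^2\le0$, so it can simply be discarded in the upper bound. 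After dropping it, a Gronwall estimate in $s$ controls $Q$ by $\e^{c\rho}$ for a constant $c$ depending on the geometry of $M$ and on $\sup|R^{\nabla}|$; but here the low dimension enters decisively. In dimension $2$ or $3$ one has the Sobolev/Moser-type control that lets the quadratic-in-$\phi$ part of $\mathcal{W}$ be absorbed: more precisely, integrating the Weitzenb\"ock identity and using $\YM(t)\le\YM(0)$ (monotonicity of the energy, already established before the theorem) gives an a priori $L^\infty$ bound of the shape $\sup_M|R^{\nabla(t)}|_E^2\le \theta_1\e^{\theta_2\sqrt{\YM(0)}}\YM(0)$ on a time interval, which closes the Gronwall loop.

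Finally, the two alternatives inside the $\max$ on the right of~(\ref{bound ex23}) come from a dichotomy on how large $\rho$ is relative to the natural parabolic scale. Using the representation above together with the crude pointwise bound $|R^{\nabla(\rho)}(x)|_E^2 \le \frac{1}{\rho}\int_0^\rho\bigl(\text{something}\bigr)$ and Cauchy--Schwarz against the heat semigroup, one gets $\sup_M|R^{\nabla(\rho)}|_E^2\le \frac{4}{\rho^2}\YM(0)$ directly whenever that quantity already dominates; otherwise $\rho$ is bounded below in terms of $\YM(0)$ and one runs the Gronwall argument over the interval $[0,\rho]$ to get the exponential term. Taking the maximum of the two bounds yields~(\ref{bound ex23}), and since neither bound depends on $T$, $|R^{\nabla(\rho)}|_E$ cannot blow up as $\rho\uparrow T$. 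I expect the main obstacle to be the careful bookkeeping of the boundary local-time term and verifying that the hypotheses of Lemma~\ref{lemma Neumann norm} are genuinely what make that term have the right sign under \emph{both} sets of boundary conditions~(\ref{relative BC}) and~(\ref{absolute BC}); the rest is a fairly standard stochastic-parallel-transport computation plus a dimensional Sobolev absorption.
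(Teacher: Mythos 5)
Your high-level setup is right: run a reflecting Brownian motion from a point of interest, apply It\^o's formula to the scalar $q(t,x)=\left|R^{\nabla(t)}(x)\right|_E^2$ (the paper does not in fact need the full vector-valued Feynman--Kac apparatus with $Q_s$ and $/\!/_s$, only a scalar Bochner inequality of the form $(\partial_t-\tfrac12\Delta_M)q\le C_2(1+\sqrt q\,)q$), and note that Lemma~\ref{lemma Neumann norm} makes the $\mathrm{d}L_s$ term nonpositive so it can be discarded, under either~(\ref{relative BC}) or~(\ref{absolute BC}). That part is faithful.

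The gap is in how the low dimension is used. You invoke a ``Sobolev/Moser-type control'' that lets the quadratic term be absorbed, and say that ``integrating the Weitzenb\"ock identity and using $\YM(t)\le\YM(0)$ gives an a priori $L^\infty$ bound.'' As written that is circular: you are asserting the conclusion you want, not deriving it, and Moser iteration is not actually invoked in the paper. What the paper does instead is a parabolic-scaling trick: fix $\alpha\in(0,1)$, pick the optimal time scale $\sigma_0$ maximizing $\sigma^2\sup_{t\in[\rho_0+\sigma,\rho]}\sup_M q$, let $(t_*,x_*)$ attain that sup, and observe that $\sup_{[t_*-\alpha\sigma_0,t_*]}\sup_M q\le\tilde\alpha^2 q_0$ so the exponential factor from Gronwall is uniformly controlled over the backward window. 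Then one feeds in the pointwise heat-kernel bound $g(t,\cdot,y)\le C_1t^{-n/2}$ from Remark~\ref{rem p<Ct^n2}, giving $\zeta^{t_*,x_*}(t)\le C_1 t^{-n/2}\YM(0)$. Evaluating at the comparison time $t_0=\sqrt{\YM(0)/q_0}$ produces the self-improving inequality
\begin{equation*}
q_0\ \le\ \e^{C_2\tilde\alpha\sqrt{\YM(0)}}\,\tilde C\, q_0^{\,n/4}\,\YM(0)^{(4-n)/4},
\end{equation*}
and the point of the assumption $\dim M\le 3$ is precisely that the exponent $n/4<1$, so this inequality can be solved for $q_0$, yielding $q_0\le(\e^{C_2\tilde\alpha\sqrt{\YM(0)}}\tilde C)^{4/(4-n)}\YM(0)$. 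That is where the exponential term $\theta_1\e^{\theta_2\sqrt{\YM(0)}}\YM(0)$ comes from. The dichotomy in the $\max$ is governed by whether $t_0\ge\alpha\sigma_0$ (which directly gives $\sup_M q(\rho,\cdot)\le\YM(0)/(\alpha^2\rho^2)$ via the definition of $\sigma_0$) or $t_0<\alpha\sigma_0$ (which triggers the self-improving inequality above) --- not by the ``crude bound $\frac1\rho\int_0^\rho(\cdots)$ and Cauchy--Schwarz'' you suggest. Without the maximizing-point construction and the $t^{-n/2}$ kernel decay, your Gronwall loop does not close, and the Sobolev route, besides not being what is needed, would require an entirely separate regularity machinery to make rigorous on a manifold with boundary.
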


A similar result can be obtained in dimension~4 provided that the
initial energy $\YM(0)$ is smaller than a certain value $\xi$. We
emphasize that $\xi$ depends on nothing but~$M$.

\begin{theorem}\label{thm YMH dim4}
Let the dimension $\dim M$ equal~4. Suppose the boundary $\partial
M$ is convex in the sense of~(\ref{def convex}). Then there exists a
constant $\xi>0$ depending only on the manifold~$M$ and satisfying
the following statement: The solution $\nabla(t)$ of Eq.~(\ref{YM
heat eq}) with the boundary conditions~(\ref{relative BC})
or~(\ref{absolute BC}) obeys the estimate
\begin{equation}\label{bound ex4}\sup_{x\in
M}\left|R^{\nabla(\rho)}(x)\right|^2_E\le\max\left\{\frac{4\sqrt{\YM(0)}}{\rho^2}\,,\sqrt{\YM(0)}\,\right\},\qquad
\rho\in(0,T),
\end{equation}
if the initial energy $\YM(0)$ is smaller than $\xi$.
\end{theorem}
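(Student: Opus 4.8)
The plan is to run the same Bochner--Weitzenb\"ock/stochastic argument that underpins Theorem~\ref{thm YMH dim23}, but to close the differential inequality using the Sobolev inequality on $M$ instead of a crude pointwise bound, which is exactly where the dimension $4$ becomes borderline and forces the smallness hypothesis on $\YM(0)$. First I would recall the Weitzenb\"ock formula for the curvature: along a solution of Eq.~(\ref{YM heat eq}), the quantity $u(t,x)=\left|R^{\nabla(t)}(x)\right|_E^2$ satisfies a differential inequality of the form $\left(\frac{\partial}{\partial t}-\Delta_M\right)u\le c_1\,|R^{\nabla(t)}|_E\,u$ for a constant $c_1$ depending only on the geometry of $M$ (this is the standard reaction term $\langle R\#R,R\rangle$ bounded via the algebraic structure of the curvature). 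The convexity hypothesis~(\ref{def convex}) on $\partial M$, together with the boundary conditions~(\ref{relative BC}) or~(\ref{absolute BC}), is what guarantees via Lemma~\ref{lemma Neumann norm} that $\frac{\partial}{\partial\nu}u\le0$ on $\partial M$, so that the reflecting Brownian motion argument of~\cite{MARBAT02} applies with no boundary contribution.

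Next I would set up the probabilistic representation: letting $X_t$ be a reflecting Brownian motion on $M$ (run at the appropriate speed to match the normalization $\Delta_M$ versus $\frac12\Delta_M$), the sign condition at the boundary gives, for $0<s<\rho<T$,
\begin{equation}\label{prob rep dim4}
u(\rho,x)\le \mathbb E_x\!\left[u(\rho-s,X_s)\exp\!\left(c_1\int_0^s \big|R^{\nabla(\rho-s+r)}(X_r)\big|_E\,\mathrm{d}r\right)\right].
\end{equation}
The exponential factor is the obstacle: to control it I must bound $\int_0^s|R^{\nabla}(X_r)|_E\,\mathrm{d}r$, and here is where the energy $\YM$ enters. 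In dimension $n=4$, H\"older's inequality in space gives, for each fixed time, $\int_M|R^{\nabla}|_E\,\mathrm{d}x\le \mathrm{vol}(M)^{3/4}\,\YM^{1/4}\cdot\YM^{1/4}$ — more precisely one uses $\|R^{\nabla}\|_{L^1}\lesssim \|R^{\nabla}\|_{L^2}=\YM^{1/2}$ — so the time-integral of the spatial $L^1$-norm is at most $s\cdot C\,\YM(0)^{1/2}$ since $\YM$ is non-increasing. Converting this into a bound on the expectation in~(\ref{prob rep dim4}) requires the standard heat-kernel/occupation estimate: $\mathbb E_x\big[\int_0^s f(X_r)\,\mathrm{d}r\big]\le C_M\|f\|_{L^1(M)}$ when $n\ge3$, valid for the reflecting Brownian motion on a compact manifold with the parametrix bound $p(t,x,y)\le C t^{-n/2}$ of Remark~\ref{rem p<Ct^n2}. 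One then uses Jensen's inequality, or an exponential-moment refinement of this occupation bound, to pull the exponential through: provided $c_1\cdot C_M\cdot C\,\YM(0)^{1/2}$ is small enough — this is the definition of the threshold $\xi$ — the exponential expectation is controlled by a fixed constant times $\mathbb E_x[u(\rho-s,X_s)]$.

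Finally I would optimize over $s$ and iterate. The factor $\mathbb E_x[u(\rho-s,X_s)]=\int_M p(s,x,y)u(\rho-s,y)\,\mathrm{d}y$ is bounded, using $p(s,x,y)\le C s^{-n/2}=Cs^{-2}$, by $Cs^{-2}\YM(\rho-s)^{1/2}\cdot\YM(\rho-s)^{1/2}$... — more carefully, by $Cs^{-2}\int_M u(\rho-s,y)\,\mathrm{d}y\le Cs^{-2}\mathrm{vol}(M)^{1/2}\YM(0)^{1/2}$ after one more H\"older step, and the competition between the two regimes $s\sim\rho$ (small $\rho$, giving the $\rho^{-2}$ term) and $s$ of unit size (giving the bare $\sqrt{\YM(0)}$ term) produces exactly the maximum appearing in~(\ref{bound ex4}). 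The main obstacle, and the step I expect to require the most care, is the exponential-moment estimate: getting a clean statement that $\mathbb E_x\big[\exp(c_1\int_0^s|R^{\nabla}(X_r)|_E\,\mathrm{d}r)\big]$ is bounded by a universal constant when $\YM(0)<\xi$, uniformly in $s\in(0,\rho)$ and $x\in M$. This is where the borderline nature of $n=4$ is felt — in lower dimensions the Sobolev exponent gives room to spare and no smallness is needed, whereas here one must track constants carefully through Khas'minskii's lemma (or a Gronwall-type bootstrap on $\mathbb E_x[\exp(\cdots)]$) to extract the explicit threshold $\xi$ depending only on $M$.
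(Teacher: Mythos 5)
Your broad framework matches the paper's: Bochner--Weitzenb\"ock to get a reaction--diffusion inequality for $q=|R^{\nabla(t)}|^2_E$, a reflecting Brownian motion representation, Lemma~\ref{lemma Neumann norm} to kill the boundary local time term, and the heat kernel bound of Remark~\ref{rem p<Ct^n2} to bring in $\YM(0)$. You also correctly identify that $n=4$ is the borderline dimension forcing the smallness hypothesis. However, the central mechanism you propose does not close, and the paper closes it in a genuinely different way.

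The difficulty is your treatment of the reaction term. You leave the random integral $\int_0^s|R^{\nabla}(X_r)|_E\,\mathrm{d}r$ inside the exponential in~(\ref{prob rep dim4}) and propose to control its exponential moment via Khas'minskii. The occupation estimate you invoke, $\mathbb E_x\bigl[\int_0^s f(X_r)\,\mathrm{d}r\bigr]\le C_M\|f\|_{L^1(M)}$, is false: writing the left-hand side as $\int_0^s\int_M g(r,x,y)f(y)\,\mathrm{d}y\,\mathrm{d}r$ and pairing $g(r,x,\cdot)$ with $f$ in $L^\infty$-$L^1$ costs $\|g(r,x,\cdot)\|_\infty\sim r^{-n/2}$, which is not integrable at $r=0$ for $n\ge2$. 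The correct pairing for our data is $L^2$-$L^2$, since $\YM$ controls $\|R^{\nabla}\|_{L^2}$, and the semigroup property gives $\|g(r,x,\cdot)\|_{L^2}^2=g(2r,x,x)\lesssim r^{-n/2}$, hence $\|g(r,x,\cdot)\|_{L^2}\lesssim r^{-n/4}$. But then $\int_0^s r^{-n/4}\,\mathrm{d}r$ diverges exactly when $n=4$. So even the first moment $\mathbb E_x\bigl[\int_0^s|R^{\nabla}(X_r)|_E\,\mathrm{d}r\bigr]$ is not controlled by $\YM$ in dimension four, and the Khas'minskii route is blocked at the very first step, not just at the exponential-moment stage where you anticipated trouble.

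The paper avoids this issue entirely by never letting the reaction term be random. It runs a parabolic maximality (Schoen-type doubling) argument: one chooses $\sigma_0,t_*,x_*$ to maximize $\sigma^2\sup_{[\rho_0+\sigma,\rho]\times M}q$ as in~(\ref{ex23 aux5})--(\ref{star def}), sets $q_0=q(t_*,x_*)$, and deduces from the maximality the deterministic uniform bound $\sup_{[t_*-\alpha\sigma_0,t_*]\times M}q\le\tilde\alpha^2 q_0$, estimate~(\ref{ex23 aux3}). Substituting this into the Weitzenb\"ock inequality~(\ref{ex23 aux1}) makes the reaction coefficient a \emph{constant} $C_2(1+\tilde\alpha\sqrt{q_0})$ on the relevant parabolic cylinder, so the exponential weight in the It\^o/supermartingale argument is deterministic and the representation reduces to $q_0\le\e^{C_2(1+\tilde\alpha\sqrt{q_0})t}\zeta^{t_*,x_*}(t)$. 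Choosing the specific time $t'=\sqrt{\YM(0)^\beta/q_0}$ and applying~(\ref{g<C_1t^-n/2}) with $n=4$ gives $\zeta^{t_*,x_*}(t')\le C_1 q_0\YM(0)^{1-\beta}$, and the coefficient of $q_0$ is $<1$ once $\YM(0)$ is small; the resulting contradiction yields $\sigma_0^2 q_0\le\YM(0)^\beta/\alpha^2$ and hence~(\ref{bound ex4}). This maximality step is the essential idea missing from your proposal; without it, the random exponential cannot be tamed using only $L^2$ (energy) control in the critical dimension.
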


We turn our attention to dimensions~5 and higher. In this case, the
proof of the result will require the Li-Yau-Hamilton estimate
established in Section~\ref{sec LYH}. This forces us to impose
stronger geometric assumptions on the manifold $M$.

The following theorem yields a bound on $R^{\nabla(\rho)}$ provided
$\YM(0)$ is smaller than a certain value $\xi(\rho)$ depending
on~$\rho\in[0,T)$. This result implies that the curvature of a
solution to Eq.~(\ref{YM heat eq}) cannot blow up after time~$\rho$
if the initial energy does not exceed $\xi(\rho)$. In the above
setting, the connection $\nabla(t)$ is defined for each $t\in[0,T)$
and depends differentiably on $t$ on this interval. Therefore,
$R^{\nabla(t)}$ does not blow up at time $T$ if $\YM(0)<\xi(\rho)$
for some $\rho\in(0,T)$.

\begin{theorem}\label{thm YMH dim5+}
Let the dimension $\dim M$ be greater than or equal to~5. Suppose
the boundary $\partial M$ is totally geodesic. Moreover, suppose
either Assumption~\ref{assu curv bndy} of Theorem~\ref{theorem LYH
no crv} or Assumptions~\ref{assu Ricci par} and~\ref{assu nneg sect}
of Theorem~\ref{theorem LYH crv} are fulfilled for $M$. Then there
exists a positive non-decreasing function $\xi(s)$ on $(0,\infty)$
that depends on nothing but $M$ and satisfies the following
statement: Given $\rho\in(0,T)$, the solution $\nabla(t)$ of
Eq.~(\ref{YM heat eq}) with the boundary conditions~(\ref{relative
BC}) or~(\ref{absolute BC}) obeys the estimate
\begin{equation}\label{bound ex5+}\sup_{x\in
M}\left|R^{\nabla(\rho)}(x)\right|^2_E\le\max\left\{\frac{16\sqrt{\YM(0)}}{\rho^2}\,,\sqrt{\YM(0)}\,\right\}
\end{equation}
if the initial energy $\YM(0)$ is smaller than $\xi(\rho)$.
\end{theorem}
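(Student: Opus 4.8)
The plan is to reduce Theorem~\ref{thm YMH dim5+} to a pointwise bound on $u(t,x):=|R^{\nabla(t)}(x)|_E^2$ and to extract that bound from a Feynman--Kac representation along a reflecting Brownian motion on $M$, whose short-time part is controlled by the monotonicity formula of Lemma~\ref{lemma monot}. The first step is the Bochner--Weitzenb\"ock inequality for $u$: using Eq.~(\ref{YM heat eq}), the Bianchi identity $\mathrm{d}_{\nabla(t)}R^{\nabla(t)}=0$, and the Weitzenb\"ock formula relating the Hodge Laplacian on $\Ad E$-valued $2$-forms to $\nabla^*\nabla$ and the curvature of $M$, one obtains
\[
\left(\frac{\partial}{\partial t}-\tfrac12\Delta_M\right)u\le c_1 u+c_2 u^{3/2},
\]
where $c_1$ depends only on curvature bounds for the compact manifold $M$ and $c_2$ is universal; the factor $\tfrac12$ in Eq.~(\ref{YM heat eq}) makes the diffusion term coincide with the generator $\tfrac12\Delta_M$ of Brownian motion. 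In addition, by Lemma~\ref{lemma Neumann norm} the outward normal derivative $\frac{\partial}{\partial\nu}u$ is nonpositive on $\partial M$, since $\partial M$ is convex (here totally geodesic) and $R^{\nabla(t)}$ satisfies~(\ref{relative BC}) or~(\ref{absolute BC}).

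Next I would set up the probabilistic representation, following the technique of~\cite{MARBAT02}. Let $(X_s)_{s\in[0,\rho]}$ be a reflecting Brownian motion on $M$ with generator $\tfrac12\Delta_M$ started at $x$, with boundary local time $L_s$. Applying It\^o's formula to $u(\rho-s,X_s)$ together with a Feynman--Kac exponential weight, and using that the boundary local-time term has exactly the sign ensured by Lemma~\ref{lemma Neumann norm} (which is what makes the relevant process a submartingale), one arrives at
\[
\left|R^{\nabla(\rho)}(x)\right|_E^2\le\mathbb E_x\!\left[\left|R^{\nabla(0)}(X_\rho)\right|_E^2\exp\!\left(c_1\rho+c_2\!\int_0^\rho\!\left|R^{\nabla(\rho-r)}(X_r)\right|_E\,\mathrm{d}r\right)\right].
\]
To make It\^o's formula and the Feynman--Kac manipulation legitimate one first works on $[0,\rho]$, where $\nabla(t)$ is smooth and $u$ is a priori bounded, and then checks that the final bound is independent of that a priori bound (or argues by continuity in $\rho$); the exit time estimate of Section~\ref{sec exit time} may also be used here.

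The heart of the proof, and the reason the dimension restriction $n\ge5$ and the extra geometric hypotheses appear, is controlling the exponential functional. By a Khasminskii-type estimate it suffices to make $\sup_{x\in M}\mathbb E_x\big[\int_0^\rho|R^{\nabla(\rho-r)}(X_r)|_E\,\mathrm{d}r\big]$ small. Writing this as $\int_0^\rho\!\int_M p_r(x,y)|R^{\nabla(\rho-r)}(y)|_E\,\mathrm{d}y\,\mathrm{d}r$ and applying the Cauchy--Schwarz inequality with $\int_M p_r(x,y)\,\mathrm{d}y=1$, I would bound the inner integral by $\big(\int_M p_r(x,y)|R^{\nabla(\rho-r)}(y)|_E^2\,\mathrm{d}y\big)^{1/2}$, a heat-kernel-weighted local energy. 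In dimensions $2$, $3$, $4$ the crude bound ($\|p_r\|_\infty\le Cr^{-n/2}$ from Remark~\ref{rem p<Ct^n2}, together with $\YM$ non-increasing) already makes the time integral converge; for $n\ge5$ it diverges as $r\to0$, and here I would invoke the monotonicity formula of Lemma~\ref{lemma monot} --- whose proof rests precisely on the Li--Yau--Hamilton estimate, Theorem~\ref{theorem LYH no crv} or Theorem~\ref{theorem LYH crv} according to which of the curvature hypotheses on $M$ holds --- to bound $\int_M p_r(x,y)|R^{\nabla(\rho-r)}(y)|_E^2\,\mathrm{d}y$ uniformly in $r\in(0,\rho]$ by essentially $\|p_\rho\|_\infty\YM(0)\le C\rho^{-n/2}\YM(0)$, again using $\YM$ non-increasing and the kernel bound~(\ref{p<Ct^-n/2}). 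Feeding this into the Khasminskii estimate yields $\mathbb E_x[\,\cdot\,]\le 2\e^{c_1\rho}$ provided $\YM(0)$ is below a threshold $\xi(\rho)$, which may be taken positive and non-decreasing because the condition needed is of the form ``$\rho^{-n/2}\YM(0)$ small''.

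Finally I would assemble the estimate: the same monotonicity formula also gives the pointwise bound $|R^{\nabla(\rho)}(x)|_E^2\le C\rho^{-n/2}\YM(0)$ directly (let $r\to0$ in the weighted energy, whose limit is $u(\rho,x)$ and which is dominated by its value at $r=\rho$), or, equivalently, combine the bound on the exponential functional with the Cauchy--Schwarz estimate for $\mathbb E_x[|R^{\nabla(0)}(X_\rho)|_E^2]$ in the representation above. Shrinking $\xi(\rho)$ so that $C\rho^{2-n/2}\sqrt{\YM(0)}\le16$ and $C\rho^{-n/2}\sqrt{\YM(0)}\le1$ whenever $\YM(0)<\xi(\rho)$, the bound $C\rho^{-n/2}\YM(0)$ is dominated by $\max\{16\rho^{-2}\sqrt{\YM(0)},\sqrt{\YM(0)}\}$, which is~(\ref{bound ex5+}); the two branches of the maximum correspond to $\rho$ small versus large relative to the energy scale. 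I expect the main obstacle to be the third step --- the uniform short-scale control of the weighted curvature energy for $n\ge5$, i.e.\ proving and correctly applying Lemma~\ref{lemma monot} in the presence of the totally geodesic boundary and the reflecting Brownian motion, and verifying that the error terms in the (only approximate, on a manifold) monotonicity are genuinely absorbed by the Li--Yau--Hamilton correction term. A secondary, more routine difficulty is the justification of the Feynman--Kac representation up to the boundary and the removal of the a priori bound used to set it up.
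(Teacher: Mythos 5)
Your proposal identifies the correct ingredients---the Bochner--Weitzenb\"ock differential inequality for $q=|R^{\nabla(t)}|_E^2$, It\^o's formula along a reflecting Brownian motion, Lemma~\ref{lemma Neumann norm} for the sign of the boundary local-time term, the heat-kernel bound from Remark~\ref{rem p<Ct^n2}, and Lemma~\ref{lemma monot} as the point where the Li--Yau--Hamilton estimate enters---but the Khasminskii step in your third paragraph contains a genuine gap, and the paper uses a different mechanism precisely to avoid it.

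The problem is the claim that Lemma~\ref{lemma monot} bounds $\zeta^{\rho,x}(r)=\int_M g(r,y,x)\,q(\rho-r,y)\,\mathrm{d}y$ \emph{uniformly} in $r\in(0,\rho]$ by something like $C\rho^{-n/2}\YM(0)$. It does not. The lemma only asserts near-monotonicity of $r\mapsto r^2\zeta^{r_0,x}(r)$: the inequality reads $\zeta^{r_0,x}(t_1)\le t_1^{-2}\bigl(t_2^2\e^{u(t_2)}\zeta^{r_0,x}(t_2)+C_3(t_2-t_1)\YM(0)\bigr)$, so the bound it supplies on $\zeta^{r_0,x}(t_1)$ blows up like $t_1^{-2}$ as $t_1\to0$. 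That is unavoidable, because $\zeta^{r_0,x}(t_1)\to q(r_0,x)$ as $t_1\to 0$, and $q(r_0,x)$ is exactly the quantity being estimated. Feeding $\zeta^{\rho,x}(r)\lesssim r^{-2}$ into your Cauchy--Schwarz step gives $\sqrt{\zeta^{\rho,x}(r)}\lesssim r^{-1}$, and $\int_0^\rho r^{-1}\,\mathrm{d}r$ diverges logarithmically; the crude heat-kernel bound $\zeta\lesssim r^{-n/2}\YM(0)$ fares even worse for $n\ge5$. So the Khasminskii condition $\sup_x\mathbb E_x\bigl[\int_0^\rho|R^{\nabla(\rho-r)}(X_r)|_E\,\mathrm{d}r\bigr]<1$ cannot be verified this way, and the fourth-paragraph claim that ``$\zeta$ is dominated by its value at $r=\rho$'' by letting $r\to0$ is likewise wrong.

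The missing idea, which the paper's proof supplies, is a parabolic point-picking (Moser-type) argument identical to the one used in the proofs of Theorems~\ref{thm YMH dim23} and~\ref{thm YMH dim4}: choose $\sigma_0$, $t_*\in[\rho_0+\sigma_0,\rho]$, and $x_*\in M$ so that $(t_*,x_*)$ realizes $\sup_\sigma\bigl(\sigma^2\sup_{t\ge\rho_0+\sigma}\sup_M q\bigr)$. This gives the a priori bound $q\le\tilde\alpha^2 q_0$ on a parabolic neighborhood of $(t_*,x_*)$, with $q_0=q(t_*,x_*)$, which controls the Feynman--Kac exponential directly (no Khasminskii lemma needed) and yields $q_0\le\e^{C_2(1+\tilde\alpha\sqrt{q_0})t}\,\zeta^{t_*,x_*}(t)$ for $t<\alpha\sigma_0$. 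The monotonicity formula is then applied only at the single $q_0$-dependent scale $t'=\sqrt{\YM(0)^\beta/q_0}$, relating $\zeta^{t_*,x_*}(t')$ to $\zeta^{t_*,x_*}(T_0)$ for a fixed $T_0$, which is where the factor $t'^{-2}$ is absorbed. A contradiction argument then forces $\sigma_0^2 q_0\le\YM(0)^\beta/\alpha^2$ once $\YM(0)$ is below a $\rho$-dependent threshold, and the theorem follows. Without the point-picking step, the Feynman--Kac representation over the full interval $[0,\rho]$ is not controllable by the tools available.
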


The assertions of Theorems~\ref{thm YMH dim23}, \ref{thm YMH dim4},
and~\ref{thm YMH dim5+} may be refined. We present them here in the
less general form in order to ensure that the technical details do
not obscure the qualitative meaning. The possible refinements are
explained in Remarks~\ref{refine 23}, \ref{refine 4},
and~\ref{refine 5+}.

To prove the three theorems above, we employ the probabilistic
technique developed in~\cite{MARBAT02}. The main stochastic process
to be used for our arguments is a reflecting Brownian motion on the
manifold $M$. Its transition density is the Neumann heat kernel on
$M$. Before introducing the probabilistic machinery, we need to
state two geometric results.

First of all, it is necessary to formulate a version of the
integration by parts formula. Let us recollect some conventions and
notation. The boundary of $M$ carries a natural Riemannian metric
inherited from $M$. The orientation of $\partial M$ is induced by
that of $M$. The integration over~$\partial M$ is to be carried out
with respect to the Riemannian volume measure on~$\partial M$. We
write $\nu$ for the outward unit normal vector field on the
boundary. The letter $\iota$ stands for the interior product.

We are now ready to lay down integration by parts formula. Our
source for this result is the paper~\cite{LGNCpre}.

\begin{lemma}\label{lemma int parts} Let $\nabla$ be a connection in $E$
compatible with the structure group~$G$. Consider $\Ad E$-valued
forms $\phi\in\Omega^p(\Ad E)$ and $\psi\in\Omega^{p+1}(\Ad E)$ with
$p=0,\ldots,\dim M-1$. The equality
\[
\int_M\left(\left<\mathrm{d}_\nabla\phi,\psi\right>_E
-\left<\phi,\mathrm{d}^*_\nabla\psi\right>_E\right)\mathrm{d}
x=\int_{\partial M}\left<\phi,\iota_\nu\psi\right>_E\mathrm{d} x
\] holds true.
\end{lemma}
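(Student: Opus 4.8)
The plan is to reduce the statement to the classical integration by parts formula for the exterior derivative and its adjoint acting on \emph{real-valued} differential forms, applied componentwise. The key observation is that since the structure group $G$ sits inside $\mathrm O(d)$, the connection $\nabla$ is metric-compatible, and hence the operators $\mathrm{d}_\nabla$ and $\mathrm{d}^*_\nabla$ interact with the fiber metrics exactly the way the ordinary $\mathrm d$ and $\delta$ do. Concretely, I would first establish the pointwise ``Leibniz'' identity: for $\phi\in\Omega^p(\Ad E)$ and $\psi\in\Omega^{p+1}(\Ad E)$, the real-valued $p$-form $\alpha$ defined by contracting $\phi$ and $\psi$ with the $\Ad E$ fiber metric satisfies
\[
\mathrm d\alpha \;=\; \left<\mathrm d_\nabla\phi,\psi\right>_E\,\omega \;-\;(-1)^{?}\left<\phi,\mathrm d^*_\nabla\psi\right>_E\,\omega \;+\;(\text{a total divergence-type term}),
\]
where $\omega$ is the Riemannian volume form; here one uses that $\nabla$ preserves $\left<\cdot,\cdot\right>_\mathfrak g$ so that differentiating the fiber pairing only hits $\phi$ and $\psi$ themselves. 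More precisely I would work with the ``interior-product one-form'' $\eta$ whose value on a vector field $X$ is $\left<\phi,\iota_X\psi\right>_E$, and show by a direct computation in a local orthonormal frame, using the definitions of $\mathrm d_\nabla$ and $\mathrm d^*_\nabla$ given in the text together with metric compatibility of $\nabla$, that the divergence of the corresponding vector field equals $\left<\mathrm d_\nabla\phi,\psi\right>_E-\left<\phi,\mathrm d^*_\nabla\psi\right>_E$ pointwise on $M$.

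Once this pointwise identity is in hand, the second step is simply to integrate it over $M$ and invoke the divergence theorem for manifolds with boundary. The boundary term produced is $\int_{\partial M}\left<\phi,\iota_\nu\psi\right>_E\,\mathrm dx$, since the flux of the relevant vector field through $\partial M$ is exactly the pairing of $\phi$ with $\iota_\nu\psi$; no orientation or sign subtlety survives because $\nu$ is the \emph{outward} unit normal, which is the convention already fixed in the text. The compactness of $M$ guarantees all integrals are finite and the divergence theorem applies without decay hypotheses.

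The main obstacle, and really the only nontrivial point, is bookkeeping the signs and the contraction indices in the pointwise Leibniz identity correctly, especially the interplay between the $(-1)^{i+1}$ factors in the definition of $\mathrm d_\nabla$, the sign in the definition of $\mathrm d^*_\nabla$, and the antisymmetrization inherent in $p$-forms. A clean way to organize this is to fix a point $x\in M$, choose a local orthonormal frame $\{e_1,\dots,e_n\}$ that is parallel at $x$ (so all Christoffel symbols vanish there and $D_{e_i}e_j=0$ at $x$), and choose a local parallel orthonormal frame for $\Ad E$ at $x$ as well; then at $x$ every covariant derivative reduces to an ordinary directional derivative of components, and the identity collapses to the standard algebraic fact about $\mathrm d$ and $\delta$ on $\mathbb R$-valued forms, applied coordinatewise. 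Since both sides of the claimed identity are tensorial (frame-independent), verifying it at an arbitrary point in such a frame suffices. I would also note explicitly that the convention $\mathrm d^*_\nabla=0$ on $\Omega^0(\Ad E)$ is exactly what makes the $p=0$ case consistent: there the left side reads $\int_M\left<\mathrm d_\nabla\phi,\psi\right>_E\,\mathrm dx$ and the identity becomes the covariant divergence theorem $\int_M\left<\nabla\phi,\psi\right>_E\,\mathrm dx=-\int_M\left<\phi,\mathrm d^*_\nabla\psi\right>_E\,\mathrm dx+\int_{\partial M}\left<\phi,\iota_\nu\psi\right>_E\,\mathrm dx$ for a section $\phi$ and a $1$-form $\psi$. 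Finally, I would remark that this is precisely the formula proved in \cite{LGNCpre}, so one may alternatively just cite it; but the self-contained frame computation above is short enough to include.
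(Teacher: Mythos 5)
The paper does not actually prove this lemma; it simply cites~\cite{LGNCpre} as the source of the formula. Your proposal supplies a self-contained proof, so there is nothing to compare against in the text itself. The proof you sketch is correct and follows the standard route: take the vector field $V$ dual to the $1$-form $X\mapsto\left<\phi,\iota_X\psi\right>_E$, compute its divergence at a point in a frame that is parallel there, use metric compatibility of the induced connection on $\Lambda^pT^*M\otimes\Ad E$ to split the derivative between $\phi$ and $\psi$, regroup the first sum into $\left<\mathrm d_\nabla\phi,\psi\right>_E$ (the $(-1)^{k-1}$ arising from moving $e_{i_k}$ to the front of $\psi$ cancels the $(-1)^{k+1}$ in the definition of $\mathrm d_\nabla$), and recognize the second sum as $-\left<\phi,\mathrm d^*_\nabla\psi\right>_E$ directly from the paper's definition $\mathrm d^*_\nabla\psi=-\sum_i\iota_{e_i}\nabla_{e_i}\psi$. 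Integrating and applying the divergence theorem with outward normal $\nu$ yields the stated boundary term. This is the right argument and the frame-at-a-point device is exactly the clean way to handle the combinatorics you correctly identify as the only delicate part.

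One small correction: your closing remark about the $p=0$ case is off. When $p=0$ we have $\psi\in\Omega^1(\Ad E)$, so $\mathrm d^*_\nabla\psi$ is computed from the $\Omega^1\to\Omega^0$ piece of the operator and is generically nonzero; the convention $\mathrm d^*_\nabla=0$ on $\Omega^0(\Ad E)$ is never invoked in this lemma, since $\psi$ is always at least a $1$-form. The left side for $p=0$ therefore still contains the $-\left<\phi,\mathrm d^*_\nabla\psi\right>_E$ term, contrary to what you wrote. This does not affect the main computation, which does not rely on that convention, but the remark as stated is misleading and should be dropped or corrected.
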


As mentioned above, an argument involving Lemma~\ref{lemma int
parts} proves that $\YM(t)$ is non-increasing in $t\in[0,T)$; see,
for instance, \cite{JR92,LGNCpre}. This fact is crucial for our
further considerations.

The next step is to understand what Eqs.~(\ref{relative BC})
and~(\ref{absolute BC}) can tell us about the behavior of
$\left|R^{\nabla(t)}(x)\right|_E^2$ near the boundary of $M$. In
order to do this, we present the following result. It may be viewed
as a variant of Lemma~3.1\footnote{This statement was labeled
Lemma~3.1 in a preliminary version of~\cite{LGNCpre}. It may appear
under a different tag in the final manuscript.} in~\cite{LGNCpre}
for manifolds with convex boundary. The proof utilizes a computation
carried out in~\cite{LGNCpre}. Given $\phi\in\Omega^p(\Ad E)$ and
$x\in M$, the notation $\phi(x)$ refers to the restriction of $\phi$
to $(T_xM)^p$.

\begin{lemma}\label{lemma Neumann norm}
Let the boundary $\partial M$ be convex in the sense of~(\ref{def
convex}). Suppose $\nabla$ is a connection in $E$ compatible with
the structure group $G$. Consider an $\Ad E$-valued $p$-form
$\phi\in\Omega^p(\Ad E)$ with $p=0,\ldots,\dim M$. If either the
equations
\begin{equation}\label{rel BC phi}
\phi_{\tan}=0,\qquad\left(\mathrm{d}^*_{\nabla}\phi\right)_{\tan}=0
\end{equation} or the equations
\begin{equation}\label{abs BC phi}
\phi_{\norm}=0,\qquad\left(\mathrm{d}_{\nabla}\phi\right)_{\norm}=0
\end{equation}
are satisfied on $\partial M$, then the formula
\begin{equation}\label{est Neumann norm}
\frac\partial{\partial\nu}\left|\phi(x)\right|^2_E\le0,\qquad
x\in\partial M,
\end{equation}
holds true.
\end{lemma}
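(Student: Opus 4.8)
The plan is to reduce the statement to a pointwise computation at a boundary point $x\in\partial M$ involving the covariant derivative of $\phi$ in the normal direction. Fix $x\in\partial M$ and choose a local orthonormal frame $e_1,\ldots,e_{n-1},e_n=\nu$ adapted to the boundary, with $e_1,\ldots,e_{n-1}$ tangent to $\partial M$ and parallel along the normal geodesic through $x$ (normal coordinates for $\partial M$). Writing $|\phi(x)|^2_E=\sum_{|I|=p}\langle\phi(e_I),\phi(e_I)\rangle_{\mathfrak g}$ over multi-indices $I$ drawn from $\{e_1,\ldots,e_n\}$, differentiation gives
\[
\tfrac12\,\frac{\partial}{\partial\nu}\left|\phi(x)\right|^2_E=\sum_{|I|=p}\bigl\langle(\nabla_\nu\phi)(e_I),\phi(e_I)\bigr\rangle_{\mathfrak g},
\]
so the task is to show the right-hand side is $\le0$. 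One splits the sum according to whether the multi-index $I$ contains the normal direction $e_n$ or consists purely of tangential directions.

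For the purely tangential multi-indices, the idea is to use the boundary conditions to control $(\nabla_\nu\phi)(e_{i_1},\ldots,e_{i_p})$ with all $i_j\le n-1$. Under the relative conditions~(\ref{rel BC phi}), $\phi_{\tan}=0$ means $\phi(e_I)=0$ for every tangential $I$, so these terms vanish outright; the surviving terms are those where $I$ contains $\nu$, and for those one expands $(\nabla_\nu\phi)(\nu,e_J)$ and rewrites $\nabla_\nu$ in terms of tangential derivatives using $\bigl(\mathrm{d}^*_\nabla\phi\bigr)_{\tan}=0$, i.e. $\sum_{i}(\nabla_{e_i}\phi)(e_i,\cdot)=0$ on $\partial M$ restricted to tangential slots. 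Commuting derivatives and using that $e_1,\ldots,e_{n-1}$ are parallel along $\nu$ at $x$, the normal derivative of the tangential part of $\phi(\nu,e_J)$ gets traded for tangential derivatives of $\phi_{\tan}$, which again vanishes; the leftover terms are precisely the ones producing the second fundamental form, and convexity $\II(X,X)\ge0$ makes them nonpositive. Under the absolute conditions~(\ref{abs BC phi}) the roles of tangential and normal multi-indices are interchanged, and one argues dually using $\phi_{\norm}=0$ together with $\bigl(\mathrm{d}_\nabla\phi\bigr)_{\norm}=0$ and the Bianchi-type expansion of $\mathrm{d}_\nabla$; the structure of the computation is the same modulo a Hodge-star duality, which is why the paper can treat both cases uniformly.

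The key input is the explicit computation from~\cite{LGNCpre}, which expresses $\frac{\partial}{\partial\nu}|\phi(x)|^2_E$ (under either set of boundary conditions) as a sum of two contributions: one that vanishes identically because of the first equation in~(\ref{rel BC phi}) or~(\ref{abs BC phi}) together with the second (a divergence-type identity), and one of the form $-\sum_{i}\langle\II(e_i,\cdot)\,\text{-type term},\,\phi(\cdot)\rangle$ built from the second fundamental form. The point of the present lemma, as opposed to the totally-geodesic version in~\cite{LGNCpre}, is that the residual $\II$-term need not vanish but has a definite sign: since $\partial M$ is convex, $\II(X,X)\ge0$, and the $\II$-term enters with a minus sign, yielding~(\ref{est Neumann norm}).

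The main obstacle is bookkeeping: correctly isolating, among the many terms produced by differentiating $|\phi|^2_E$ and expanding the boundary conditions, exactly the combination that assembles into a manifestly nonnegative quadratic form in $\phi$ with coefficients given by $\II$. This requires care with the anti-symmetrization inherent in $p$-forms, with the commutator of $\nabla_\nu$ and the tangential derivatives (which brings in curvature terms that must be shown to cancel or to be absorbed), and with the sign conventions in $\mathrm{d}_\nabla$ and $\mathrm{d}^*_\nabla$. Since the underlying computation is already in~\cite{LGNCpre}, the proof can largely cite it and then observe that the one term dropped there because $\II\equiv 0$ is, in our setting, $\le 0$ by convexity.
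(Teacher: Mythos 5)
Your proposal follows essentially the same route as the paper: both rely on the explicit boundary computation from the cited work of Charalambous--Gross, and both hinge on the observation that the residual term involving the second fundamental form has a definite sign once $\partial M$ is convex rather than totally geodesic. The paper streamlines the casework by applying the Hodge star to pass from the relative to the absolute boundary conditions, writes $\phi=\alpha\wedge\mathrm{d}x_n+\beta$ in Fermi coordinates, quotes the identity $\tfrac12\,\partial_\nu|\phi|^2_E=\sum\langle\beta_J,\beta_K\rangle_E\,\langle D_\nu \mathrm{d}x^J,\mathrm{d}x^K\rangle_\Lambda$, and then just evaluates $\langle D_\nu\mathrm{d}x^J,\mathrm{d}x^K\rangle_\Lambda$ in a frame diagonalizing $\II$; you instead propose to expand $\langle\nabla_\nu\phi,\phi\rangle_E$ slot by slot and trade the normal derivative for tangential ones via $(\mathrm{d}^*_\nabla\phi)_{\tan}=0$ (or dually $(\mathrm{d}_\nabla\phi)_{\norm}=0$). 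Your version works, but two of the complications you flag do not actually arise: no commutation of $\nabla_\nu$ with tangential derivatives is needed (the boundary condition already exchanges the derivatives without producing a commutator), and consequently no curvature terms of $M$ appear---the only leftover terms are those built from $\II$, which the diagonalizing frame collapses to diagonal contributions with the correct sign. In short, the idea and the decisive step are the same as in the paper; the paper's use of the Hodge star and the quoted formula from the reference simply makes the bookkeeping you worry about disappear.
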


\begin{proof}
We begin by selecting a local coordinate system on $M$ convenient
for our arguments. Choose a point $\tilde x\in\partial M$. Let
$\{e_1,\ldots,e_{n-1}\}$ be an orthonormal basis of the space
$T_{\tilde x}\partial M$ such that
\[\II(e_i,e_j)=\delta_i^j\lambda_i,\qquad i,j=1,\ldots,n-1.\]
In this formula, $\delta_i^j$ is the Kronecker symbol, and
$\lambda_i$ are the principal curvatures at $\tilde x$. Since
$\partial M$ is convex, $\lambda_i$ must be nonnegative for all
$i=1,\ldots,n-1$. Take a coordinate neighborhood $U^\partial$ of
$\tilde x$ in $\partial M$ with a coordinate system
$y_1,\ldots,y_{n-1}$ in~$U^\partial$ centered at $\tilde x$. We
assume $\frac\partial{\partial y_i}$ coincides with $e_i$ at $\tilde
x$ for each $i=1,\ldots,n-1$. As in the proof of
Theorem~\ref{theorem LYH no crv}, consider the mapping $\mu(r,x)$
defined on $[0,\epsilon)\times\partial M$ by the formula
$\mu(r,x)=\exp_x(-r\nu)$. The number $\epsilon>0$ is chosen small
enough for $\mu(r,x)$ to be a diffeomorphism onto its image. The set
$U=\mu\left([0,\epsilon)\times U^\partial\right)$ is a neighborhood
of $\tilde x$ in the manifold $M$. We extend $y_1,\ldots,y_{n-1}$ to
a coordinate system $x_1,\ldots,x_n$ in $U$ by demanding that the
equalities
\begin{align*}x_k(\mu(r,x))&=y_k(x),~x_n(\mu(r,x))=r,\\
r&\in[0,\epsilon),~x\in U^\partial,~k=1,\ldots,n-1,
\end{align*}
hold true; cf.~\cite{HM91}. The vector $\frac\partial{\partial x_i}$
coincides with $e_i$ at $\tilde x$ for each $i=1,\ldots,n-1$. It is
easy to see that $\frac\partial{\partial x_i}$ is tangent to the
boundary on the set $U^\partial$ for $i=1,\ldots,n-1$. The vector
field $\frac\partial{\partial x_n}$ coincides with $-\nu$ at every
point of $U^\partial$.

Having fixed a suitable local coordinate system on $M$, we now
proceed to the actual proof of the lemma. Without loss of
generality, suppose Eqs.~(\ref{abs BC phi}) hold for $\phi$ on
$\partial M$. If this is not the case and Eqs.~(\ref{rel BC phi})
hold instead, we can replace $\phi$ with the form $*\,\phi$
satisfying~(\ref{abs BC phi}). (The symbol $*$ denotes the Hodge
star operator.) Since $|\phi(x)|_E$ equals $|{*\,\phi(x)}|_E$ for
all $x\in M$, proving the lemma for~$*\,\phi$ would suffice.

From the technical point of view, it is convenient for us to assume
that $\phi$ belongs to $\Omega^p(\Ad E)$ with $p$ between~1 and
$\dim M$. This restriction is not significant. Indeed, if $\phi$ is
an $\Ad E$-valued 0-form on $M$, then estimate~(\ref{est Neumann
norm}) follows directly from the second formula in~(\ref{abs BC
phi}).

Our next step is to write down an expression for the derivative
$\frac\partial{\partial\nu}\left|\phi(x)\right|^2_E$ using the
coordinate system introduced above. Observe that, in the
neighborhood $U$ of the point $\tilde x$, one can represent $\phi$
by the equality
\[
\phi(x)=\alpha(x)\wedge \mathrm{d} x_n+\beta(x).
\]
Here, $\alpha$ and $\beta$ are $\Ad E$-valued forms defined on $U$
and given by the formulas
\[
\alpha(x)=\sum\alpha_I(x)\,\mathrm{d} x^I,\qquad
\beta(x)=\sum\beta_J(x)\,\mathrm{d} x^J.
\]
The sums are taken over all the multi-indices
$I=(i_1,\ldots,i_{p-1})$ and $J=(j_1,\ldots,j_p)$ with $1\le
i_1<\cdots<i_{p-1}<n$ and $1\le j_1<\cdots<j_p<n$. The mappings
$\alpha_I(x)$ and $\beta_J(x)$ defined on $U$ are local sections of
the bundle $\Ad E$. The notations $\mathrm{d} x^I$ and $\mathrm{d}
x^J$ refer to $\mathrm{d} x_{i_1}\wedge\cdots\wedge \mathrm{d}
x_{i_{p-1}}$ and $\mathrm{d} x_{j_1}\wedge\cdots\wedge \mathrm{d}
x_{j_p}$. If $p=1$, then $\alpha$ should be interpreted as an $\Ad
E$-valued 0-form on $U$. If $p=n$, then $\beta$ equals zero.

Following the computation from~\cite[Proof of Lemma~3.1]{LGNCpre},
we arrive at the formula
\begin{align}\label{aux Nnorm}
\frac12\frac\partial{\partial\nu}|\phi(x)|_E^2&=\sum\left<\beta_J(x),\beta_K(x)\right>_E\left<D_\nu
\mathrm{d} x^J,\mathrm{d} x^K\right>_\Lambda, \nonumber \\ x&\in
U\cap\partial M.
\end{align}
The summation is now carried out over all $J=(j_1,\ldots,j_p)$ and
$K=(k_1,\ldots,k_p)$ with $1\le j_1<\cdots<j_p<n$ and $1\le
k_1<\cdots<k_p<n$. The angular brackets with the lower index
$\Lambda$ stand for the scalar product in $\Lambda T^*M$ induced by
the Riemannian metric on $M$. If $p=n$, then the sum in~(\ref{aux
Nnorm}) should be interpreted as~0.

We have thus laid down an expression for
$\frac\partial{\partial\nu}\left|\phi(x)\right|^2_E$ in our local
coordinates. The next step is to establish estimate~(\ref{est
Neumann norm}) at the point $\tilde x$ using formula~(\ref{aux
Nnorm}). The argument will rely on the properties of the coordinate
system fixed in $U$. Remark that $\tilde x$ was originally chosen as
an arbitrary point in $\partial M$. Therefore,
establishing~(\ref{est Neumann norm}) at this point would suffice to
prove the lemma.

Let us take a closer look at the scalar product $\left<D_\nu
\mathrm{d} x^J,\mathrm{d} x^K\right>_\Lambda$ in the right-hand side
of~(\ref{aux Nnorm}). The formula
\[
\left<D_\nu \mathrm{d} x^J,\mathrm{d}
x^K\right>_\Lambda=\sum_{l=1}^p\det\left(
\begin{array}{ccc}
\bigl<\mathrm{d} x_{j_1},\mathrm{d} x_{k_1}\bigr>_\Lambda &\cdots &
\left<\mathrm{d} x_{j_1},\mathrm{d} x_{k_p}\right>_\Lambda \\ \vdots
& & \vdots \\ \left<\mathrm{d} x_{j_{l-1}},\mathrm{d}
x_{k_1}\right>_\Lambda &\cdots & \left<\mathrm{d}
x_{j_{l-1}},\mathrm{d} x_{k_p}\right>_\Lambda \\
\\ \left<\vphantom{\mathrm{d} x_{j_{l-1}}} D_\nu \mathrm{d}
x_{j_l},\mathrm{d} x_{k_1}\right>_\Lambda &\cdots &
\left<\vphantom{\mathrm{d} x_{j_{l-1}}}D_\nu \mathrm{d}
x_{j_l},\mathrm{d} x_{k_p}\right>_\Lambda \\ \\
 \left<\mathrm{d} x_{j_{l+1}},\mathrm{d} x_{k_1}\right>_\Lambda &\cdots &
\left<\mathrm{d} x_{j_{l+1}},\mathrm{d} x_{k_p}\right>_\Lambda \\ \vdots & & \vdots \\
\left<\mathrm{d} x_{j_p},\mathrm{d} x_{k_1}\right>_\Lambda &\cdots &
\left<\mathrm{d} x_{j_p},\mathrm{d} x_{k_p}\right>_\Lambda
\end{array}\right)
\]
holds on $U\cap\partial M$. Our choice of the coordinate system
provides the identities
\begin{align*}
\left<\mathrm{d} x_l,\mathrm{d} x_m\right>_\Lambda&=\delta_l^m, \\
\langle D_\nu \mathrm{d} x_l,\mathrm{d}
x_m\rangle_\Lambda&=-\II\left(\frac\partial{\partial
x_l},\frac\partial{\partial x_m}\right)=-\delta_l^m\lambda_l, \qquad
l,m=1,\ldots,n-1,
\end{align*}
at the point $\tilde x$. (Recall that $\delta_l^m$ is the Kronecker
symbol, and $\lambda_l$ are the principal curvatures.) As a
consequence,
\[
\left<D_\nu \mathrm{d} x^J,\mathrm{d}
x^K\right>_\Lambda=-\left(\lambda_{j_1}+\cdots+\lambda_{j_p}\right)
\] at $\tilde x$ when $J$ coincides with $K$, and \[
\left<D_\nu \mathrm{d} x^J,\mathrm{d} x^K\right>_\Lambda=0 \] at
$\tilde x$ when $J$ differs from~$K$.

Let us substitute the obtained equalities into~(\ref{aux Nnorm}). We
conclude that
\[
\frac12\frac\partial{\partial\nu}|\phi(x)|_E^2=-\sum\left<\beta_J(x),\beta_J(x)\right>_E\left(\lambda_{j_1}+\cdots+\lambda_{j_p}\right).
\]
at the point $\tilde x$. The summation is carried out over all the
multi-indices $J$ as described above. The scalar product
$\left<\beta_J(x),\beta_J(x)\right>_E$ is greater than or equal to~0
for every $J$. The principal curvatures
$\lambda_{j_1},\ldots,\lambda_{j_p}$ are all nonnegative because
$\partial M$ is convex. As a result, estimate~(\ref{est Neumann
norm}) holds at the point $\tilde x$. This proves the lemma because
$\tilde x$ can be chosen arbitrarily.  \end{proof}

Our intention is to employ the technique developed
in~\cite{MARBAT02} for establishing Theorems~\ref{thm YMH dim23},
\ref{thm YMH dim4}, and~\ref{thm YMH dim5+}. We now introduce the
required probabilistic machinery. Consider the bundle $O(M)$ of
orthonormal frames over $M$. The letter $\pi$ denotes the projection
in this bundle. Let $u_t^Y$ be a horizontal reflecting Brownian
motion on $O(M)$ starting at the frame $Y\in O(M)$. We assume
$u_t^Y$ is defined on the filtered probability space
$(\Omega,\mathcal F,(\mathcal F_t)_{t\in[0,\infty)},\mathbb P)$
satisfying the ``usual hypotheses." The symbol $\mathbb E$ will be
used for the expectation. The rigorous definition of a horizontal
reflecting Brownian motion on the bundle of orthonormal frames can
be found in~\cite[Chapter~V]{NISW89} and in~\cite{EH02a}.

Introduce the process $X^y_t=\pi\left(u^Y_t\right)$. Here, we denote
$y=\pi(Y)$. It is well-known that $X_t^y$ is a reflecting Brownian
motion on $M$ starting at the point $y$. Details can be found
in~\cite[Chapter~V]{NISW89}.

By definition, the process $u^Y_t$ satisfies the equation
\begin{align}\label{Ito u_t}
\mathrm{d} f\left(t,u_t^Y\right)=&\sum_{i=1}^n(\mathcal
H_if)\left(t,u_t^Y\right)\mathrm{d} B_t^i \nonumber \\
&+\left(\frac{\partial}{\partial t}
+\frac12\Delta_{O(M)}\right)f\left(t,u_t^Y\right)\mathrm{d}
t-(\mathcal Nf)\left(t,u_t^Y\right)\mathrm{d} L_t
\end{align}
for every smooth real-valued function $f(t,u)$ on $[0,\infty)\times
O(M)$. Let us describe the objects occurring in the right-hand side.
As before, $n\ge2$ is the dimension of $M$. The notation $\mathcal
H_i$ refers to the canonical horizontal vector fields on~$O(M)$. The
process $(B_t^1,\ldots,B_t^n)$ is an $n$-dimensional Brownian motion
defined on $(\Omega,\mathcal F,(\mathcal
F_t)_{t\in[0,\infty)},\mathbb P)$. The operator $\Delta_{O(M)}$ is
Bochner's horizontal Laplacian. It appears as the sum of $\mathcal
H_i^2$ with $i=1,\ldots,n$. The symbol $\mathcal N$ stands for the
horizontal lift of the vector field $\nu$ on $\partial M$. The
non-decreasing process $L_t$ is the boundary local time. It only
increases when $\pi(u_t^Y)$ belongs to $\partial M$.

Consider a smooth real-valued function $h(t,x)$ on $[0,\infty)\times
M$. Applying~(\ref{Ito u_t}) with $f(t,u)=h(t,\pi(u))$, we obtain an
equation for the process $h(t,X_t^y)$. This simple observation is
important to the proofs of Theorems~\ref{thm YMH dim23}, \ref{thm
YMH dim4}, and~\ref{thm YMH dim5+}. It is also used for establishing
Proposition~\ref{proposition exit time} in the next section. When
$f(t,u)=h(t,\pi(u))$, the formulas
\begin{align}\label{lift of operators}
\Delta_{O(M)}f(t,u)&=\left.\Delta_Mh(t,x)\right|_{x=\pi(u)}, \nonumber \\
(\mathcal Nf)(t,u)
&=\frac{\partial}{\partial\nu}\left.h(t,x)\right|_{x=\pi(u)},\qquad
t\in[0,\infty),~u\in O(M),
\end{align}
hold true.

Let $g(t,x,y)$ denote the transition density of the reflecting
Brownian motion~$X_t^y$. The function $\tilde g_y(t,x)=g(2t,x,y)$ is
a smooth positive solution to the heat equation~(\ref{heat eq p})
with the Neumann boundary condition~(\ref{Neumann BC p}). Note that
the density $g(t,x,y)$ will be playing a significant role in our
further considerations. The estimates required to establish
Theorems~\ref{thm YMH dim23}, \ref{thm YMH dim4}, and~\ref{thm YMH
dim5+} rely on those known for~$g(t,x,y)$.

All the probabilistic objects we will need are now at hand.
Introduce the notation
\[
q(t,x)=\left|R^{\nabla(t)}(x)\right|_E^2,\qquad t\in[0,T),~x\in M.
\]
Given $r\in(0,T)$, define
\[
\zeta^{r,y}(t)=\int_M q\left(r-t,x\right)
g\left(t,x,y\right)\mathrm{d} x,\qquad t\in(0,r].
\]
The quantity $\zeta^{r,y}(t)$ may be interpreted as $\mathbb
E\left(q\left(r-t,X_t^y\right)\right)$. Applying Remark~\ref{rem
p<Ct^n2} to the function~$\tilde g_{y}(t,x)$ and taking the
monotonicity of $\YM(t)$ into account, one concludes that
\begin{equation}\label{g<C_1t^-n/2}
\zeta^{r,y}(t)\le C_1t^{-\frac{\dim M}2}\YM(0),\qquad
t\in(0,\min\{r,1\}],
\end{equation}
with $C_1>0$ determined by~(\ref{p<Ct^-n/2}). We are now in a
position to prove Theorems~\ref{thm YMH dim23} and \ref{thm YMH
dim4}. Two more lemmas are required to consider the case where $\dim
M$ is~5 or higher. We will state them afterwards.

\begin{proof}[Proof of Theorem~\ref{thm YMH dim23}.]
Fix $\rho\in\left(0,T\right)$. Our goal is to obtain a bound on
$\sup_{x\in M}q(\rho,x)$. Choose $\alpha\in(0,1)$ and denote
$\rho_0=\max\left\{0,\rho-\frac1\alpha\right\}$. Let the number
$\sigma_0\in(0,\rho-\rho_0]$ satisfy the equality
\begin{equation}\label{ex23 aux5}
\sigma_0^2\sup_{t\in\left[\rho_0+\sigma_0,\rho\right]}\sup_{x\in
M}q(t,x)=\sup_{\sigma\in[0,\rho-\rho_0]}\left(\sigma^2\sup_{t\in\left[\rho_0+\sigma,\rho\right]}\sup_{x\in
M}q(t,x)\right).
\end{equation}
There exist $t_*\in\left[\rho_0+\sigma_0,\rho\right]$ and $x_*\in M$
such that
\begin{equation}\label{star def}q(t_*,x_*)=\sup_{t\in\left[\rho_0+\sigma_0,\rho\right]}\sup_{x\in M}q(t,x).\end{equation}
It is convenient for us to write $q_0$ instead of $q(t_*,x_*)$. Our
next step is to estimate the number $q_0$. The desired bound on
$\sup_{x\in M}q(\rho,x)$ will follow therefrom.

Using the heat equation~(\ref{YM heat eq}) and the
Bochner-Weitzenb\"{o}ck formula, we can prove the existence of a
constant $C_2>0$ such that
\begin{equation}\label{ex23 aux1}
\left(\frac{\partial}{\partial t}-\frac12\Delta_M\right)q(t,x)\le
C_2\left(1+\sqrt{q(t,x)}\,\right)q(t,x)
\end{equation}
for $t\in[0,T)$ and $x\in M$; see~\cite[Lemma~2.2]{YCCLS94}. The
definition of $\sigma_0$ implies
\begin{align}\label{ex23 aux3}
\sup_{t\in\left[t_*-\alpha\sigma_0,t_*\right]}\sup_{x\in
M}q(t,x)&\le\sup_{t\in\left[\rho_0+(1-\alpha)\sigma_0,\rho\right]}\sup_{x\in
M}q(t,x) \nonumber
\\ &\le\frac{\sigma_0^2}{(1-\alpha)^2\sigma_0^2}\sup_{t\in\left[\rho_0+\sigma_0,\rho\right]}\sup_{x\in
M}q(t,x)=\tilde{\alpha}^2\,q_0
\end{align}
with $\tilde\alpha=\frac1{1-\alpha}$. Inequalities~(\ref{ex23 aux1})
and~(\ref{ex23 aux3}) will play an essential role in estimating the
number~$q_0$. Let $u_t^Y$ be a horizontal reflecting Brownian motion
in the bundle $O(M)$. We suppose $u_t^Y$ starts at a frame $Y$
satisfying $\pi(Y)=x_*$. Define $X^{x_*}_t=\pi\left(u^Y_t\right)$
and consider the process
\[
Z_t=\e^{C_2(1+\tilde\alpha\sqrt{q_0}\,)t}q\left(t_*-t,X_t^{x_*}\right)
\]
for $t\in\left[0,\alpha\sigma_0\right)$. Formulas~(\ref{Ito u_t})
and~(\ref{lift of operators}) yield
\begin{align*}
q_0&=Z_0=\mathbb E(Z_t) \\
&\hphantom{=}~-\mathbb E\left(\int_0^t\left(-\frac\partial{\partial
r}+\frac12\Delta_M\right)
\left.\e^{C_2(1+\tilde\alpha\sqrt{q_0}\,)(t_*-r)}\,q\left(r,X_{t_*-r}^{x_*}\right)\right|_{r=t_*-s}\mathrm{d} s\right)\\
&\hphantom{=}~+\mathbb
E\left(\int_0^t\e^{C_2(1+\tilde\alpha\sqrt{q_0}\,)s}
\frac{\partial}{\partial\nu}\,q\left(t_*-s,X_s^{x_*}\right)\mathrm{d}
L_s\right).
\end{align*}
In view of~(\ref{ex23 aux1}), (\ref{ex23 aux3}), and
Lemma~\ref{lemma Neumann norm}, this implies $q_0\le\mathbb E(Z_t)$
for $t\in\left[0,\alpha\sigma_0\right)$. As a consequence, the
formula
\begin{equation}\label{major est pf23}
q_0\le\e^{C_2(1+\tilde\alpha\sqrt{q_0}\,)t}\zeta^{t_*,x_*}(t),\qquad
t\in\left[0,\alpha\sigma_0\right),
\end{equation}
holds true. We will now use it to prove that
\begin{equation}\label{ex23 aux4}
\sup_{x\in M}q(\rho,x)\le\max\left\{\frac{\YM(0)}{\alpha^2\rho^2}\,,
\theta_1\e^{\theta_{2,\alpha}\sqrt{\YM(0)}}\YM(0)\right\}
\end{equation}
with $\theta_1>0$ and $\theta_{2,\alpha}>0$. Estimate~(\ref{bound
ex23}) will follow by looking at the case where $\alpha=\frac12$.

Let us assume $q_0>0$ and $\YM(0)>0$. This does not lead to any loss
of generality. Indeed, if $q_0=0$, then the supremum $\sup_{x\in
M}q(\rho,x)$ is equal to~0 and~(\ref{ex23 aux4}) holds for any
$\theta_1$ and~$\theta_{2,\alpha}$. When $\YM(0)=0$, we have
$\YM(t_*)=0$ due to the fact that $\YM(t)$ is non-increasing in
$t\in[0,T)$. In this case, $q_0$ equals~0, and~(\ref{ex23 aux4}) is
again satisfied for any $\theta_1$ and~$\theta_{2,\alpha}$.

Denote $t_0=\sqrt{\frac{\YM(0)}{q_0}}$. If $t_0\ge\alpha\sigma_0$,
then
\[
(\rho-\rho_0)^2\sup_{x\in
M}q(\rho,x)\le\sigma_0^2q_0\le\frac{\YM(0)}{\alpha^2}
\]
by virtue of the definitions of $\sigma_0$ and $t_0$. In this case,
the estimate
\begin{align}\label{ex23 aux2}
\sup_{x\in M}q(\rho,x)&\le\frac{\YM(0)}{\alpha^2(\rho-\rho_0)^2}
\nonumber \\
&=\frac{\YM(0)}{\alpha^2\left(\min\left\{\rho,\frac1\alpha\right\}\right)^2}=\max
\left\{\frac{\YM(0)}{\alpha^2\rho^2},\YM(0)\right\}
\end{align}
holds true, which means~(\ref{ex23 aux4}) is satisfied for all
$\theta_1\ge1$ and $\theta_{2,\alpha}>0$. If $t_0<\alpha\sigma_0$
(note that $\alpha\sigma_0\le\alpha(\rho-\rho_0)\le1$), then
formulas~(\ref{major est pf23}) and~(\ref{g<C_1t^-n/2}) yield
\[
q_0\le\e^{C_2(1+\tilde\alpha\sqrt{q_0}\,)t_0}\zeta^{t_*,x_*}\left(t_0\right)\le\e^{C_2\tilde\alpha\sqrt{\YM(0)}}\tilde
Cq_0^{\frac{\dim M}4}\YM(0)^{\frac{4-\dim M}4}
\]
with $\tilde C=\e^{C_2}C_1$. Hence
\begin{align*}
\sup_{x\in M}q(\rho,x)&\le
q_0\le\left(\e^{C_2\tilde\alpha\sqrt{\YM(0)}}\tilde
C\YM(0)^{\frac{4-\dim
M}4}\right)^{\frac4{4-\dim M}} \\
&=\left(\e^{C_2\tilde\alpha\sqrt{\YM(0)}}\tilde
C\right)^{\frac4{4-\dim M}}\YM(0).
\end{align*}
Combined with~(\ref{ex23 aux2}), this estimate shows that~(\ref{ex23
aux4}) holds for
\[ \theta_1=\max\left\{\tilde C^{\frac4{4-\dim
M}},1\,\right\},\qquad
 \theta_{2,\alpha}=\frac4{4-\dim M}\,C_2\tilde\alpha.
\]
We now assume $\alpha=\frac12$. The desired result follows at once.
The role of the constant $\theta_2$ is to be played by
$\theta_{2,\frac12}$.  \end{proof}

\begin{remark}\label{refine 23}
While proving the theorem, we have actually established a stronger
result. Namely, take a number $\alpha$ from the interval $(0,1)$.
Suppose the conditions of Theorem~\ref{thm YMH dim23} are satisfied.
Then the estimate
\[
\sup_{x\in
M}\left|R^{\nabla(\rho)}(x)\right|^2_E\le\max\left\{\frac{\YM(0)}{\alpha^2\rho^2}\,,
\theta_1\e^{\theta_{2,\alpha}\sqrt{\YM(0)}}\YM(0)\right\},\qquad\rho\in(0,T),
\]
holds true. In the right-hand side, $\theta_1>0$ is a constant
depending only on $M$, whereas $\theta_{2,\alpha}>0$ is determined
by $\alpha$ and $M$. When formulating Theorem~\ref{thm YMH dim23},
we restricted our attention to the case where $\alpha=\frac12$. This
was done for the sake of simplicity and understandability.
\end{remark}

\begin{proof}[Proof of Theorem~\ref{thm YMH dim4}.]
Fix $\rho\in\left(0,T\right)$, $\alpha\in(0,1)$, and
$\beta\in(0,1)$. Denote
$\rho_0=\max\left\{0,\rho-\frac1\alpha\right\}$. Let
$\sigma_0\in(0,\rho-\rho_0]$,
$t_*\in\left[\rho_0+\sigma_0,\rho\right]$, and $x_*\in M$ satisfy
Eqs.~(\ref{ex23 aux5}) and~(\ref{star def}). We write $q_0$ instead
of $q(t_*,x_*)$. Our next step is to demonstrate that
\begin{equation}\label{ex4 aux1}
\sigma_0^2q_0\le\frac{\YM(0)^\beta}{\alpha^2}
\end{equation} provided $\YM(0)$ is smaller than a number $\xi_{\alpha,\beta}>0$
depending only on $\alpha$,~$\beta$, and the manifold $M$. The
assertion of the theorem will be deduced from this estimate.

Suppose $\YM(0)=0$. Then $\YM(t_*)=0$ due to the monotonicity
of~$\YM(t)$ in $t\in[0,T)$. Ergo, $q_0$ is equal to~0. It becomes
evident that $\sigma_0^2q_0=\frac{\YM(0)^\beta}{\alpha^2}$.

We have thus proved~(\ref{ex4 aux1}) in the case where $\YM(0)=0$.
Let us consider the general situation. Assume~(\ref{ex4 aux1}) fails
to hold. Then $q_0>0$, $\YM(0)>0$, and the number
$t'=\sqrt{\frac{\YM(0)^\beta}{q_0}}$ lies in the interval
$(0,\alpha\sigma_0)\subset(0,1)$. Repeating the arguments from the
proof of Theorem~\ref{thm YMH dim23} and using~(\ref{g<C_1t^-n/2}),
we conclude that the inequality
\[
q_0\le\e^{C_2(1+\tilde\alpha\sqrt{q_0}\,)t'}\,\zeta^{t_*,x_*}\left(t'\right)
\le\e^{C_2\tilde\alpha\sqrt{\YM(0)^\beta}}\tilde
Cq_0\YM(0)^{1-\beta}
\]
must be satisfied. Here, $\tilde\alpha$ stands for
$\frac1{1-\alpha}$. The constant $\tilde C$ appears as
$\e^{C_2}C_1$. It is easy to see, however, that the above inequality
fails when
\[\YM(0)<\xi_{\alpha,\beta}=\min\left\{\left(\e^{C_2\tilde\alpha}\tilde C\right)^{-\frac1{1-\beta}},1\right\}.\] This contradiction
establishes~(\ref{ex4 aux1}) under the condition
$\YM(0)<\xi_{\alpha,\beta}$.

In order to complete the proof of the theorem, we estimate
$\sup_{x\in M}q(\rho,x)$. The definition of $\sigma_0$ suggests that
\[
(\rho-\rho_0)^2\sup_{x\in M}q(\rho,x)\le\sigma_0^2q_0.\] In view
of~(\ref{ex4 aux1}), this implies
\begin{align*}
\sup_{x\in
M}q(\rho,x)&\le\frac{\YM(0)^\beta}{\alpha^2(\rho-\rho_0)^2}
\\ &=\frac{\YM(0)^\beta}{\alpha^2\left(\min\left\{\rho,\frac1\alpha\right\}\right)^2}=\max\left\{
\frac{\YM(0)^\beta}{\alpha^2\rho^2}\,,\YM(0)^\beta\right\}
\end{align*}
provided $\YM(0)<\xi_{\alpha,\beta}$. The assertion of the theorem
follows by assuming $\alpha=\beta=\frac12$. Inequality~(\ref{bound
ex4}) holds when $\YM(0)<\xi=\xi_{\frac12,\frac12}$.
\end{proof}

\begin{remark}\label{refine 4}
In the course of the proof, we have actually established a result
stronger than Theorem~\ref{thm YMH dim4}. Namely, fix
$\alpha\in(0,1)$ and $\beta\in(0,1)$. Suppose the conditions of
Theorem~\ref{thm YMH dim4} are satisfied. If $\YM(0)$ is smaller
than $\xi_{\alpha,\beta}$, then the estimate
\[\sup_{x\in
M}\left|R^{\nabla(\rho)}(x)\right|^2_E
\le\max\left\{\frac{\YM(0)^\beta}{\alpha^2\rho^2}\,,\YM(0)^\beta\right\},\qquad\rho\in(0,T),
\]
holds true. Here, $\xi_{\alpha,\beta}$ is a number depending on
$\alpha$, $\beta$, and $M$. When formulating Theorem~\ref{thm YMH
dim4}, we restricted our attention to $\alpha=\beta=\frac12$. This
was done in order to make the statement more understandable.
\end{remark}

Let us concentrate on the case where $\dim M$ is~5 or higher. First
of all, we need a few auxiliary identities. Their purpose is to help
us obtain a monotonicity formula related to the Yang-Mills heat
equation~(\ref{YM heat eq}). We establish these identities in
Lemma~\ref{lemma formulas for monot} below. The proof is quite
transparent yet worthy of attention. It demonstrates vividly how the
boundary conditions imposed on $R^{\nabla(t)}$ interact with those
satisfied by $g(t,x,y)$. In a way, this interplay of boundary
conditions explains why the Brownian motion used to implement the
probabilistic technique in our context should be reflected at
$\partial M$.

Desiring to remain at the higher level of abstraction, we state
Lemma~\ref{lemma formulas for monot} for a generic $\Ad E$-valued
form $\phi$ and a generic function $f(x)$ on $M$. In our further
arguments, it will be applied with $\phi$ equal to the curvature
$R^{\nabla(r-t)}$ and $f(x)$ equal to the density $g(t,x,y)$.

\begin{lemma}\label{lemma formulas for monot} Let $\nabla$ be a
connection in $E$ compatible with the structure group~$G$. Suppose
$f(x)$ is a real-valued function on $M$ such that
$\frac\partial{\partial\nu}f(x)=0$ on $\partial M$. Consider an $\Ad
E$-valued $p$-form $\phi\in\Omega^p(\Ad E)$ with $p=1,\ldots,\dim
M$. If either Eqs.~(\ref{rel BC phi}) or Eqs.~(\ref{abs BC phi}) are
satisfied for $\phi$ on $\partial M$, then the following formulas
hold true:
\begin{align}\label{int monot}
\int_M\left|\phi\right|_E \Delta_Mf\,\mathrm{d} x&=-\int_M\left<
\grad\left|\phi\right|_E,\grad f\right>\mathrm{d} x, \nonumber
\\
\int_M\left<\mathrm{d}_\nabla \mathrm{d}_\nabla^*\phi,f\phi\right>_E
\mathrm{d}
x&=\int_M\left<\mathrm{d}_\nabla^*\phi,\mathrm{d}_\nabla^*(f\phi)\right>_E\mathrm{d}
x,
\nonumber \\
\int_M\left<\mathrm{d}_\nabla\left(\iota_{\grad\log
f}\phi\right),f\phi\right>_E\mathrm{d} x
&=\int_M\left<\iota_{\grad\log
f}\phi,\mathrm{d}_\nabla^*\left(f\phi\right)\right>_E\mathrm{d} x.
\end{align}
\end{lemma}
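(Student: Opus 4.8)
The three formulas are all instances of integration by parts, so the plan is to apply Lemma~\ref{lemma int parts} three times with carefully chosen forms, and to check in each case that the boundary integral on the right-hand side vanishes under the hypotheses~(\ref{rel BC phi}) or~(\ref{abs BC phi}) together with $\frac{\partial}{\partial\nu}f=0$. The governing principle is that the pairing $\langle\psi,\iota_\nu\chi\rangle_E$ over $\partial M$ dies whenever one of the two forms is purely tangential and the other purely normal (or, dually, whenever one factor has vanishing tangential part); the boundary conditions on $\phi$ are precisely engineered to produce this complementarity.

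First I would treat the first identity, $\int_M|\phi|_E\,\Delta_M f\,\mathrm{d}x=-\int_M\langle\grad|\phi|_E,\grad f\rangle\,\mathrm{d}x$. This is the scalar Green's formula: with $h=|\phi|_E$ (a Lipschitz, indeed smooth-where-$\phi\neq0$, real-valued function) one has $\int_M(h\,\Delta_M f+\langle\grad h,\grad f\rangle)\,\mathrm{d}x=\int_{\partial M}h\,\frac{\partial f}{\partial\nu}\,\mathrm{d}x$, and the right side is $0$ because $\frac{\partial f}{\partial\nu}=0$ on $\partial M$. (One can alternatively read this off Lemma~\ref{lemma int parts} with $p=0$, $\nabla$ the Levi-Civita connection, $\phi\rightsquigarrow h$, $\psi\rightsquigarrow\mathrm{d}f$, using $\mathrm{d}^*\mathrm{d}f=-\Delta_M f$ and $\iota_\nu\mathrm{d}f=\frac{\partial f}{\partial\nu}$.) The only subtlety is the non-smoothness of $|\phi|_E$ at zeros of $\phi$; this is handled by a routine approximation, replacing $|\phi|_E$ by $\sqrt{|\phi|_E^2+\delta^2}$ and letting $\delta\to0$, or simply by noting the formula is needed only as an inequality/identity that survives the limit.

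Next, for the second identity I would apply Lemma~\ref{lemma int parts} with the pair $\phi'=\mathrm{d}_\nabla^*\phi\in\Omega^{p-1}(\Ad E)$ and $\psi'=f\phi\in\Omega^p(\Ad E)$, giving
\[
\int_M\bigl(\langle\mathrm{d}_\nabla\mathrm{d}_\nabla^*\phi,f\phi\rangle_E-\langle\mathrm{d}_\nabla^*\phi,\mathrm{d}_\nabla^*(f\phi)\rangle_E\bigr)\,\mathrm{d}x=\int_{\partial M}\langle\mathrm{d}_\nabla^*\phi,\iota_\nu(f\phi)\rangle_E\,\mathrm{d}x=\int_{\partial M}f\,\langle\mathrm{d}_\nabla^*\phi,\iota_\nu\phi\rangle_E\,\mathrm{d}x.
\]
So I must show $\langle\mathrm{d}_\nabla^*\phi,\iota_\nu\phi\rangle_E=0$ on $\partial M$ under either set of boundary conditions. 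Under~(\ref{rel BC phi}) the factor $(\mathrm{d}_\nabla^*\phi)_{\tan}=0$, and $\iota_\nu\phi$ is by construction a tangential form, so the pairing vanishes. Under~(\ref{abs BC phi}), $\phi_{\norm}=0$ forces $\iota_\nu\phi=0$ outright. For the third identity I would likewise apply Lemma~\ref{lemma int parts} with $\phi''=\iota_{\grad\log f}\phi\in\Omega^{p-1}(\Ad E)$ and $\psi''=f\phi$, producing the boundary term $\int_{\partial M}f\,\langle\iota_{\grad\log f}\phi,\iota_\nu\phi\rangle_E\,\mathrm{d}x=\int_{\partial M}\langle\iota_{\grad f}\phi,\iota_\nu\phi\rangle_E\,\mathrm{d}x$; here I use that $\frac{\partial f}{\partial\nu}=0$ means $\grad f$ is tangential along $\partial M$, so $\iota_{\grad f}\phi$ — like $\iota_\nu\phi$ — is a tangential form, and the same two arguments (either $(\iota_{\grad\log f}\phi)_{\tan}$ pairs against $(\iota_\nu\phi)$ but is killed by $\phi_{\tan}=0$ giving $(\iota_{\grad f}\phi)_{\tan}=0$ under~(\ref{rel BC phi}), or $\iota_\nu\phi=0$ directly under~(\ref{abs BC phi})) finish it.

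The main obstacle is bookkeeping the tangential/normal decomposition correctly: one must verify that $\iota_\nu(f\phi)=f\,\iota_\nu\phi$, that $\iota_\nu$ of any form is tangential, that $\phi_{\tan}=0$ implies $(\iota_X\phi)_{\tan}=0$ for $X$ tangent to $\partial M$, and (for the first identity) dispose of the mild singularity of $|\phi|_E$. None of this is deep, but it is exactly the place where the interplay of the two boundary-condition families — relative versus absolute — must be tracked case by case; I would organize the writeup around a single lemma-internal observation that $\langle\chi,\iota_\nu\phi\rangle_E=0$ on $\partial M$ whenever $\chi$ is tangential and $\phi$ satisfies~(\ref{rel BC phi}) or~(\ref{abs BC phi}), and then invoke it three times.
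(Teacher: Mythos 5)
Your proposal is correct and follows essentially the same route as the paper: all three identities come from Lemma~\ref{lemma int parts} (or Green's formula in the scalar case), with the boundary term killed by the tangential/normal complementarity that you isolate as a single governing observation, and this is exactly the mechanism the paper works out explicitly for the third identity via a chosen orthonormal basis with $e_n=\nu$. Your remark about the non-smoothness of $|\phi|_E$ at zeros of $\phi$ is a fair technical point the paper elides, and the approximation $\sqrt{|\phi|_E^2+\delta^2}$ you suggest does repair it.
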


\begin{proof}
The first identity in~(\ref{int monot}) is a direct consequence of
the Stokes theorem and the fact that
$\frac\partial{\partial\nu}f(x)=0$. The second one can be deduced
from Lemma~\ref{lemma int parts} in a straightforward fashion.
Notably, the same argument has to be used when proving $\YM(t)$ is
non-increasing in $t\in[0,T)$; see~\cite{LGNCpre}. We will now
establish the third identity in~(\ref{int monot}).

Let us assume Eqs.~(\ref{rel BC phi}) are satisfied for~$\phi$. The
case where Eqs.~(\ref{abs BC phi}) are satisfied instead can be
treated similarly. We will show that the scalar product
$\left<\iota_{\grad\log f}\phi,\iota_\nu\left(f\phi\right)\right>_E$
vanishes on $\partial M$. In view of Lemma~\ref{lemma int parts},
the third identity in~(\ref{int monot}) would follow from this fact
as an immediate consequence.

Observe that the formula $\frac\partial{\partial\nu}f(x)=0$ implies
$\frac\partial{\partial\nu}\log f(x)=0$. Accordingly, the gradient
$\grad\log f$ is tangent to $\partial M$ at every point of~$\partial
M$. This allows us to assume $\phi$ belongs to $\Omega^p(\Ad E)$
with $p$ between~2 and $\dim M$. Indeed, if $\phi$ is an $\Ad
E$-valued 1-form on $M$, then $\iota_{\grad\log f}\phi=0$ due to the
first formula in~(\ref{rel BC phi}).

Take a point $\tilde x\in\partial M$. Choose an orthonormal basis
$\{e_1,\ldots,e_n\}$ of the tangent space $T_{\tilde x}M$ demanding
that $e_n$ coincide with $\nu$. The equality
\begin{align*}
\langle\iota_{\grad\log f}&\phi,\iota_\nu(f\phi)\rangle_E \\
&=\sum\left<\phi\left(\grad\log f,e_{i_1},\ldots,e_{i_{p-1}}\right),
f\phi\left(\nu,e_{i_1},\ldots,e_{i_{p-1}}\right)\right>_E
\end{align*}
holds at $\tilde x$. The summation is to be carried out over all the
arrays $(i_1,\ldots,i_{p-1})$ with $1\le i_1<\cdots<i_{p-1}\le n$.
It is easy to see that
$f\phi\left(\nu,e_{i_1},\ldots,e_{i_{p-1}}\right)$ vanishes when
$i_{p-1}=n$. At the same time, $\phi\left(\grad\log
f,e_{i_1},\ldots,e_{i_{p-1}}\right)$ vanishes when $i_{p-1}<n$
because $\grad\log f$ is tangent to $\partial M$ and
$\phi_{\tan}=0$. We conclude that the scalar product
$\left<\iota_{\grad\log f}\phi,\iota_\nu\left(f\phi\right)\right>_E$
equals~0 at $\tilde x$. Hence the third identity in~(\ref{int
monot}).  \end{proof}

The following lemma states a monotonicity formula related to the
Yang-Mills heat equation~(\ref{YM heat eq}). It is an important step
in establishing Theorem~\ref{thm YMH dim5+} by means of the
probabilistic technique. We emphasize that the proof of the lemma
requires the Li-Yau-Hamilton estimate obtained in Section~\ref{sec
LYH}. For relevant results, see~\cite{MARBAT02} and
also~\cite{RH93b,YCCLS94}.

\begin{lemma}\label{lemma monot}
Let the boundary $\partial M$ be totally geodesic. Suppose either
Assumption~\ref{assu curv bndy} of Theorem~\ref{theorem LYH no crv}
or Assumptions~\ref{assu Ricci par} and~\ref{assu nneg sect} of
Theorem~\ref{theorem LYH crv} are fulfilled for~$M$. Given
$r\in(0,T)$ along with $y\in M$, the formula
\[
\zeta^{r,y}(t_1)\le
\frac1{t_1^2}\left(t_2^2\e^{u(t_2)}\zeta^{r,y}(t_2)+C_3(t_2-t_1)\YM(0)\right)
\]
holds for all $t_1,t_2\in(0,\min\{r,1\})$ satisfying $t_1<t_2$.
Here, $u(t)$ is a positive increasing function on $(0,1]$ such that
$\lim_{t\to0}u(t)=0$, and $C_3>0$ is a constant. Both $u(t)$ and
$C_3$ are determined solely by the manifold $M$.
\end{lemma}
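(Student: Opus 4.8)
The plan is to show that $t\mapsto t^{2}\e^{u(t)}\zeta^{r,y}(t)$ is almost non-decreasing, with an error bounded by $\YM(0)$, and then integrate. Fix $r\in(0,T)$ and $y\in M$, abbreviate $\zeta(t)=\zeta^{r,y}(t)$, $g=g(t,\cdot,y)$, and $\phi=R^{\nabla(r-t)}$, suppressing the dependence of $\nabla$, $\mathrm d_\nabla$, $\mathrm d^*_\nabla$ on $r-t$. Throughout the relevant range $t\in(0,\min\{r,1\})$ the number $r-t$ stays positive, so parabolic regularity makes $q(r-t,x)$ and $g(t,x,y)$ smooth and $\zeta$ of class $C^{1}$. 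Differentiating the Yang--Mills heat equation~(\ref{YM heat eq}) and using the Bianchi identity $\mathrm d_\nabla R^{\nabla}=0$ gives $\partial_{t}\phi=\tfrac12\mathrm d_\nabla\mathrm d^*_\nabla\phi$; the stated property of $\tilde g_{y}$ yields $\partial_{t}g=\tfrac12\Delta_{M}g$ and $\frac{\partial}{\partial\nu}g=0$. Consequently
\[
\zeta'(t)=\int_{M}\bigl\langle\mathrm d_\nabla\mathrm d^*_\nabla\phi,g\phi\bigr\rangle_{E}\,\mathrm{d}x+\tfrac12\int_{M}|\phi|_{E}^{2}\,\Delta_{M}g\,\mathrm{d}x .
\]

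The core step is to recast this as a perfect square plus a Hessian term. In the first integral, apply the second formula of Lemma~\ref{lemma formulas for monot} (applicable since $\phi=R^{\nabla(r-t)}$ satisfies~(\ref{rel BC phi}) or~(\ref{abs BC phi})) together with $\mathrm d^*_\nabla(g\phi)=g\,\mathrm d^*_\nabla\phi-\iota_{\grad g}\phi$, getting $\int_{M}g|\mathrm d^*_\nabla\phi|_{E}^{2}-\int_{M}\langle\mathrm d^*_\nabla\phi,\iota_{\grad g}\phi\rangle_{E}$. In the second integral, Stokes' theorem (the boundary contribution vanishes because $\frac{\partial}{\partial\nu}g=0$) gives $-\int_{M}\langle\nabla_{\grad g}\phi,\phi\rangle_{E}$; the covariant Cartan formula together with $\mathrm d_\nabla\phi=0$ gives $\nabla_{\grad g}\phi=\mathrm d_\nabla(\iota_{\grad g}\phi)-(D^{2}g)\bullet\phi$, where $(D^{2}g)\bullet\phi$ is the $2$-form obtained by letting the Hessian endomorphism $(D^{2}g)^{\sharp}$ act as a derivation on each slot of $\phi$; and the third formula of Lemma~\ref{lemma formulas for monot} (whose proof is exactly the statement that $\langle\iota_{\grad g}\phi,\iota_{\nu}\phi\rangle_{E}=0$ on $\partial M$) turns $\int_{M}\langle\mathrm d_\nabla(\iota_{\grad g}\phi),\phi\rangle_{E}$ into $\int_{M}\langle\iota_{\grad g}\phi,\mathrm d^*_\nabla\phi\rangle_{E}$. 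Collecting terms, completing the square in $\mathrm d^*_\nabla\phi$, and using $D^{2}g=g\,(D^{2}\log g+\mathrm d\log g\otimes\mathrm d\log g)$ together with the pointwise identity $\bigl\langle(\grad\log g\otimes\grad\log g)\bullet\phi,\phi\bigr\rangle_{E}=|\iota_{\grad\log g}\phi|_{E}^{2}$, the first-order terms cancel exactly and we obtain
\[
\zeta'(t)=\int_{M}g\,\bigl|\mathrm d^*_\nabla\phi-\iota_{\grad\log g}\phi\bigr|_{E}^{2}\,\mathrm{d}x+\int_{M}g\,\bigl\langle(D^{2}\log g)\bullet\phi,\phi\bigr\rangle_{E}\,\mathrm{d}x .
\]

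Now the Li--Yau--Hamilton estimate enters. Since $\tilde g_{y}$ solves the heat equation~(\ref{heat eq p}) with the Neumann condition~(\ref{Neumann BC p}) and $\int_{M}\tilde g_{y}\,\mathrm{d}x\le1$, Theorem~\ref{theorem LYH no crv} applied to $\tilde g_{y}$ gives, after rescaling the time variable, $D^{2}_{X,X}\log g(t,x,y)\ge-\Lambda(t,x)\langle X,X\rangle$ with $\Lambda(t,x)=\tfrac1t+A\bigl(1+\log\tfrac{B}{t^{n/2}g(t,x,y)}\bigr)$ for suitable $A,B>0$ depending only on $M$; in the Ricci-parallel, nonnegatively curved case Theorem~\ref{theorem LYH crv} gives the same with $A=0$. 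Because the derivation action of a symmetric endomorphism bounded below by $-\Lambda\,\mathrm{Id}$ pairs with a $2$-form to at least $-2\Lambda|\phi|_{E}^{2}$, discarding the nonnegative first integral yields $\zeta'(t)\ge-2\int_{M}\Lambda(t,x)\,q(r-t,x)\,g(t,x,y)\,\mathrm{d}x$. Expanding $\Lambda$, using $\int_{M}q(r-t,x)\,\mathrm{d}x=\YM(r-t)\le\YM(0)$ (monotonicity of the energy) and the elementary bound $-s\log s\le\e^{-1}$ for $s\in(0,1]$ to control $-\int_{M}(\log g)\,q\,g\,\mathrm{d}x$, we arrive at $\zeta'(t)\ge-2\Psi(t)\zeta(t)-C\YM(0)$ on $(0,\min\{r,1\})$, where $\Psi(t)=\tfrac1t+A(1+\log B)-\tfrac{nA}{2}\log t$ and $C>0$ depend only on $M$.

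Finally set $u(t)=\int_{0}^{t}\bigl(2\Psi(s)-\tfrac2s\bigr)\,\mathrm{d}s$ (adding $\epsilon t$ for a small $\epsilon>0$ to keep $u$ strictly positive in the degenerate case $A=0$), which is finite since the integrand stays integrable at $0$; then $u$ is positive and increasing on $(0,1]$, $\lim_{t\to0}u(t)=0$, and $2t+t^{2}u'(t)-2t^{2}\Psi(t)\ge0$. Hence $\frac{\mathrm d}{\mathrm dt}\bigl(t^{2}\e^{u(t)}\zeta(t)\bigr)=\e^{u(t)}\bigl((2t+t^{2}u'(t))\zeta(t)+t^{2}\zeta'(t)\bigr)\ge-C\,t^{2}\e^{u(t)}\YM(0)\ge-C_{3}\YM(0)$ for $t\in(0,1]$, with $C_{3}=C\e^{u(1)}$. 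Integrating over $[t_{1},t_{2}]\subset(0,\min\{r,1\})$ and using $\e^{u(t_{1})}\ge1$ gives $t_{1}^{2}\zeta(t_{1})\le t_{2}^{2}\e^{u(t_{2})}\zeta(t_{2})+C_{3}(t_{2}-t_{1})\YM(0)$, and dividing by $t_{1}^{2}$ is the assertion. The main obstacle is the core identity for $\zeta'(t)$: one must check that every boundary term produced by the integrations by parts vanishes — which is precisely what the three formulas of Lemma~\ref{lemma formulas for monot} are built to provide — and that the first-order terms in $\grad\log g$ assemble into $|\mathrm d^*_\nabla\phi-\iota_{\grad\log g}\phi|_{E}^{2}$ and cancel against the rank-one part of the Hessian of $g$. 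Once this is in hand, inserting the Li--Yau--Hamilton bound and running the Gr\"onwall estimate (including the minor point $-s\log s\le\e^{-1}$) is routine.
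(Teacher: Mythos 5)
Your proof is correct and is essentially the paper's argument. The paper delegates to Proposition~3.4 and Theorem~3.7 of~\cite{MARBAT02}, noting only that the boundary terms vanish by Lemma~\ref{lemma formulas for monot} and that the required bound on the heat kernel comes from Theorem~\ref{theorem LYH no crv} or Theorem~\ref{theorem LYH crv} applied to $\tilde g_y$; what you have written out is a self-contained reconstruction of precisely that argument --- the same integration-by-parts decomposition of $\zeta'(t)$ into the perfect square $\int g\,|\mathrm{d}^*_\nabla\phi-\iota_{\grad\log g}\phi|^2_E$ plus the $\int g\,\langle(D^2\log g)\bullet\phi,\phi\rangle_E$ Hessian term, the same insertion of the Li--Yau--Hamilton estimate, and the same Gr\"onwall-type integration.
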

\begin{proof}
First, suppose Assumption~\ref{assu curv bndy} of
Theorem~\ref{theorem LYH no crv} is satisfied. We will consider the
other case later. Proposition~3.4 and Theorem~3.7 in~\cite{MARBAT02}
prove the assertion of the lemma on closed manifolds. The same line
of reasoning works in our situation. However, two points need to be
clarified:
\begin{itemize}
\item
The equality between expressions~(3.10) and~(3.11)
of~\cite{MARBAT02} holds in our setting due to Lemma~\ref{lemma
formulas for monot}. The same can be said about expressions (3.14)
and~(3.15) of that paper.
\item
In order to obtain estimate~(3.22) of~\cite{MARBAT02} for the
Neumann heat kernel $g(t,x,y)$, one should apply formula~(\ref{LYH
no curv}) above to the function $\tilde g_{y}(t,x)=g(2t,x,y)$.
\end{itemize}
The other arguments from the proofs of Proposition~3.4 and
Theorem~3.7 in~\cite{MARBAT02} work in our situation without
significant modifications.

We now consider the case when there are curvature restrictions
imposed on $M$ away from the boundary. More specifically, suppose
Assumptions~\ref{assu Ricci par} and~\ref{assu nneg sect} of
Theorem~\ref{theorem LYH crv} are satisfied. Then the assertion of
the lemma can be established by repeating the arguments from the
proofs of Proposition~3.4 and Theorem~3.7 in~\cite{MARBAT02}. The
required estimate on $g(t,x,y)$ comes from formula~(\ref{LYH crv})
in the present paper applied to the function $\tilde g_y(t,x)$.
 \end{proof}

We are now ready to prove Theorem~\ref{thm YMH dim5+}. Afterwards,
three important remarks will be made.

\begin{proof}[Proof of Theorem~\ref{thm YMH dim5+}.]
Fix $\rho\in\left(0,T\right)$, $\alpha\in(0,1)$, and
$\beta\in(0,1)$. We denote
$\rho_0=\max\left\{(1-\alpha)\rho,\rho-\frac1\alpha\right\}$. Let
$\sigma_0\in(0,\rho-\rho_0]$,
$t_*\in\left[\rho_0+\sigma_0,\rho\right]$, and $x_*\in M$ obey
Eqs.~(\ref{ex23 aux5}) and~(\ref{star def}). Set $q_0=q(t_*,x_*)$.
We will show that
\begin{equation}\label{ex5+ aux2}\sigma_0^2q_0\le\frac{\YM(0)^\beta}{\alpha^2}
\end{equation}
provided $\YM(0)$ is smaller than a certain value
$\xi_{\alpha,\beta}(\rho)$ depending on $\rho$ as a non-decreasing
function. The assertion of the theorem will be deduced from this
estimate. Note that, aside from $\rho$, the value
$\xi_{\alpha,\beta}(\rho)$ only depends on $\alpha$, $\beta$, and
the manifold $M$.

Suppose $\YM(0)=0$. Then $\YM(t_*)=0$ due to the monotonicity of
$\YM(t)$ in $t\in[0,T)$. As a consequence, $q_0$ is equal to~0. We
conclude that~(\ref{ex5+ aux2}) is satisfied when $\YM(0)=0$.

Denote $T_0=\min\left\{\rho_0+\alpha\sigma_0,1\right\}$. Observe
that $\alpha\sigma_0\le T_0<t_*$. This fact is essential because it
will allow us to apply Lemma~\ref{lemma monot} further in the proof.
Assume estimate~(\ref{ex5+ aux2}) fails to hold. Then $q_0>0$,
$\YM(0)>0$, and the number $t'=\sqrt{\frac{\YM(0)^\beta}{q_0}}$ lies
in the interval $(0,\alpha\sigma_0)\subset(0,1)$. The arguments from
the proof of Theorem~\ref{thm YMH dim23} yield
\[
q_0\le\e^{C_2\left(1+\tilde\alpha\sqrt{q_0}\,\right)t'}\zeta^{t_*,x_*}\left(t'\right)
\le\e^{C_2\left(1+\tilde\alpha\sqrt{\YM(0)^\beta}\,\right)}\zeta^{t_*,x_*}\left(t'\right).
\]
Here, the number $\tilde\alpha$ equals $\frac1{1-\alpha}$.
Lemma~\ref{lemma monot} implies
\[
\zeta^{t_*,x_*}\left(t'\right)\le
C'\frac{q_0}{\YM(0)^\beta}\left(T_0^2\zeta^{t_*,x_*}(T_0)+T_0\YM(0)\right)
\]
with $C'=\max\left\{\e^{u(1)},C_3\right\}$. (Note that
Theorems~\ref{theorem LYH no crv} and~\ref{theorem LYH crv} are
being used at this point. More precisely, the proof of
Lemma~\ref{lemma monot} relies on them.) Formula~(\ref{g<C_1t^-n/2})
and the definition of $T_0$ enable us to conclude that
\begin{align*}
q_0&\le\e^{C_2\left(1+\tilde\alpha\sqrt{\YM(0)^\beta}\,\right)}C'\frac{q_0}{\YM(0)^\beta}
\left(T_0^2\zeta^{t_*,x_*}(T_0)+T_0\YM(0)\right)
\\ &\le \e^{C_2\tilde\alpha\sqrt{\YM(0)^\beta}}C''q_0\left(C_1T_0^{2-\frac{\dim
M}2}\YM(0)^{1-\beta}+\YM(0)^{1-\beta}\right)
\\ &\le\e^{C_2\tilde\alpha\sqrt{\YM(0)^\beta}}C''q_0\YM(0)^{1-\beta}\left(C_1
(\min\{(1-\alpha)\rho,1\})^{2-\frac{\dim M}2}+1\right)
\end{align*}
with $C''=\e^{C_2}C'$. However, this is impossible when
\begin{align*}\YM(0)<\xi_{\alpha,\beta}(\rho)&=\min\left\{\xi_{\alpha,\beta}^1(\rho),\xi_{\alpha,\beta}^2(\rho),1\right\}, \\
\xi_{\alpha,\beta}^1(\rho)&=\left(2\e^{C_2\tilde\alpha}C''C_1{(\min\{(1-\alpha)\rho,1\})^{2-\frac{\dim
M}{2}}}\right)^{-\frac1{1-\beta}},
\\ \xi_{\alpha,\beta}^2(\rho)&=\left(2\e^{C_2\tilde\alpha}C''\right)^{-\frac1{1-\beta}}.
\end{align*} The present contradiction establishes~(\ref{ex5+
aux2}) under the condition $\YM(0)<\xi_{\alpha,\beta}(\rho)$.

To complete the proof of the theorem, we need to estimate
$\sup_{x\in M}q(\rho,x)$. The definition of $\sigma_0$ suggests that
\[
(\rho-\rho_0)^2\sup_{x\in M}q(\rho,x)\le\sigma_0^2q_0.\] According
to formula~(\ref{ex5+ aux2}), this implies
\begin{align*}
\sup_{x\in
M}q(\rho,x)&\le\frac{\YM(0)^\beta}{\alpha^2(\rho-\rho_0)^2}
\\ &=\frac{\YM(0)^\beta}{\alpha^2\left(\min\left\{\alpha\rho,\frac1\alpha\right\}\right)^2}
=\max\left\{
\frac{\YM(0)^\beta}{\alpha^4\rho^2}\,,\YM(0)^\beta\right\}
\end{align*}
provided $\YM(0)<\xi_{\alpha,\beta}(\rho)$. We now assume
$\alpha=\beta=\frac12$. The assertion of the theorem follows at
once. Inequality~(\ref{bound ex5+}) holds when
$\YM(0)<\xi(\rho)=\xi_{\frac12,\frac12}(\rho)$.  \end{proof}

\begin{remark}\label{refine 5+}
While proving the theorem, we have really established a stronger
result. That is, suppose $\alpha\in(0,1)$ and $\beta\in(0,1)$. Let
the conditions of Theorem~\ref{thm YMH dim5+} be fulfilled. Given
$\rho\in(0,T)$, if $\YM(0)$ is smaller than
$\xi_{\alpha,\beta}(\rho)$, then the estimate
\[\sup_{x\in
M}\left|R^{\nabla(\rho)}(x)\right|^2_E
\le\max\left\{\frac{\YM(0)^\beta}{\alpha^4\rho^2}\,,\YM(0)^\beta\right\}
\]
is satisfied. Here, $\xi_{\alpha,\beta}(s)$ is a positive
non-decreasing function on $(0,\infty)$ entirely determined by
$\alpha$, $\beta$, and $M$. In the formulation of Theorem~\ref{thm
YMH dim5+}, we only dealt with the case where
$\alpha=\beta=\frac12$. This specific framework was meant to make
the statement more understandable.
\end{remark}

\begin{remark}\label{rem BCR vs BCdel}
In the beginning of Section~\ref{sec YMH}, we imposed the boundary
conditions~(\ref{relative BC}) or~(\ref{absolute BC}) on the
curvature form $R^{\nabla(t)}$. Another approach is feasible.
Namely, one may formulate the boundary conditions for the connection
$\nabla(t)$ directly. The paper~\cite{LGNCpre} takes this particular
standpoint; see also~\cite{AM92,WEG06}. It may or may not be more
natural to impose the boundary conditions on $\nabla(t)$ than to
impose ones on $R^{\nabla(t)}$ depending on the considered problem
and the chosen perspective. However, the approach adopted in the
present paper seems to be technically simpler. The reason for this
lies in the fact that, unlike $\nabla(t)$, the curvature form
$R^{\nabla(t)}$ transforms as a tensor under changes of coordinates.
In particular, it is meaningful to talk about the tangential and the
normal components of $R^{\nabla(t)}$.
\end{remark}

\begin{remark}\label{rem BC relation}
In several situations, imposing the boundary conditions on the
connection is virtually equivalent to imposing ones on its curvature
form. Let us present an example. If a time-dependent connection
satisfies the heat equation~(\ref{YM heat eq}) and the conductor
boundary condition in the sense of~\cite{LGNCpre}, then
formulas~(\ref{relative BC}) can be proved for its curvature. The
converse statement holds with an adjustment. Roughly speaking, the
first formula in~(\ref{relative BC}) ensures that $\nabla(t)$ can be
gauge transformed locally into a connection satisfying the conductor
boundary condition. We refer to~\cite{LGNCpre} for further details.
\end{remark}

\section{An exit time estimate on manifolds with convex boundary}\label{sec exit time}

Let $u_t^Y$ be a horizontal reflecting Brownian motion on the bundle
$O(M)$. It is assumed that this process starts at the frame $Y\in
O(M)$. We consider $u_t^Y$ on a filtered probability space
$(\Omega,\mathcal F,(\mathcal F_t)_{t\in[0,\infty)},\mathbb P)$
satisfying the ``usual hypotheses." The definition and the basic
properties of a horizontal reflecting Brownian motion on $O(M)$ were
discussed in Section~\ref{sec YMH}.

As before, we set $X^y_t=\pi\left(u^Y_t\right)$ with $y=\pi(Y)$. The
process $X_t^y$ is a reflecting Brownian motion on $M$ starting at
the point $y$. This section offers an exit time estimate for $X^y_t$
under the assumption that $\partial M$ is convex. Basically, we
obtain an analogue of Lemma~4.1 in~\cite{MARBAT02};
cf.~\cite[Theorem~3.6.1]{EH02b}. This result enables us to prove
another estimate for the curvature of the solution $\nabla(t)$ to
the Yang-Mills heat equation~(\ref{YM heat eq}). More precisely, we
will establish an analogue of Theorem~4.2 in~\cite{MARBAT02} for
manifolds with boundary.

Additional notation should be introduced at this stage. Let
$\dist_y(x)$ stand for the distance between $y$ and $x\in M$ with
respect to the Riemannian metric on~$M$. Given a radius $r>0$,
consider the ball $B(y,r)=\{x\in M\,|\,\dist_y(x)<r\}$. Its closure
will be denoted by $\bar B(y,r)$. Define
\[\tau(y,r)=\inf\left\{t\ge0\,\left|\,X_t^y\in M\setminus\bar B(y,r)\right.\right\}.\]
In other words, $\tau(y,r)$ is the first exit time of the reflecting
Brownian motion $X_t^y$ from $\bar B(y,r)$. We now lay down an
estimate for $\tau(y,r)$.

\begin{proposition}\label{proposition exit time}
Let the boundary $\partial M$ be convex in the sense of~(\ref{def
convex}). There exist constants $\kappa_0>0$ and $\eta>0$ depending
only on $M$ such that the estimate
\[
\mathbb P\left\{\tau(y,r)<\kappa r^2\right\}\le\e^{-\frac\eta\kappa}
\]
holds for every $r\in(0,1)$ and $\kappa\in(0,\kappa_0)$.
\end{proposition}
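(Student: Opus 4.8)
The plan is to reproduce, in the reflecting setting, the exponential‑supermartingale argument used for Lemma~4.1 in~\cite{MARBAT02}; see also~\cite[Theorem~3.6.1]{EH02b}. First I would make a harmless reduction to small radii: if the asserted estimate holds for all $r\in(0,r_0)$, for some $r_0\in(0,1)$ depending only on $M$, with certain constants $\kappa_0,\eta$, then for $r\in[r_0,1)$ one has $\tau(y,r)\ge\tau(y,r_0)$, so writing $\kappa r^2=\kappa' r_0^2$ and using $r<1$ one recovers the estimate for $r$ after shrinking $\kappa_0$ and $\eta$. Hence it suffices to treat $r<r_0$, which I choose so small that, for every $y\in M$, the intrinsic distance $\dist_y$ is smooth on $\bar B(y,r)\setminus\{y\}$; this holds once $r$ is below the injectivity radius of $M$ (defined for manifolds with boundary), because the convexity of $\partial M$ prevents minimizing geodesics from lingering along $\partial M$, so they meet $\partial M$ only at their endpoints. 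Consequently $\psi:=\dist_y^2$ is smooth on $\bar B(y,r)$.

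Next I would record three properties of $\psi$ on $\bar B(y,r)$. Since $M$ is compact, its Ricci curvature is bounded below, and the Laplacian comparison theorem (valid here because $\partial M$ is convex) gives $\tfrac12\Delta_M\psi\le C_1$ with $C_1$ depending only on $M$; here $r<1$ is used. Plainly $|\grad\psi|^2=4\dist_y^2\le 4r^2$. Finally, at a point $x\in\partial M$ one has $\frac{\partial}{\partial\nu}\psi=2\dist_y(x)\,\langle\grad\dist_y(x),\nu\rangle\ge0$: the minimizing geodesic from $y$ to $x$, traced backwards from $x$, must re‑enter $M$, which forces its velocity at $x$ to have nonnegative inner product with the outward normal $\nu$. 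This sign, the counterpart of the nonnegativity of the principal curvatures exploited in Lemma~\ref{lemma Neumann norm}, is what makes the boundary reflection work in our favour.

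Now fix $\lambda>0$, set $\mu=\lambda C_1+2\lambda^2r^2$, and consider $Z_t=\exp\!\left(\lambda\psi(X_t^y)-\mu t\right)$. Applying the It\^o formula~(\ref{Ito u_t}) to $f(t,u)=\psi(\pi(u))$ and using~(\ref{lift of operators}), one finds that $dZ_t$ equals a local‑martingale differential, plus $Z_t\big(\tfrac\lambda2\Delta_M\psi-\mu+\tfrac{\lambda^2}2|\grad\psi|^2\big)\,dt$, plus $-Z_t\lambda\frac{\partial}{\partial\nu}\psi\,dL_t$. The three properties above make the $dt$‑coefficient at most $\lambda C_1+2\lambda^2r^2-\mu=0$ and the $dL_t$‑coefficient nonpositive; hence $Z_{t\wedge\tau(y,r)}$ is a bounded supermartingale. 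Optional stopping at $\tau(y,r)\wedge s$ with $s=\kappa r^2$, combined with $\psi(y)=0$ and $\psi\!\left(X^y_{\tau(y,r)}\right)=r^2$, yields $1=\mathbb E(Z_0)\ge\mathbb E\big(Z_{\tau(y,r)\wedge s}\big)\ge\e^{\lambda r^2-\mu s}\,\mathbb P\{\tau(y,r)\le s\}$, so
\[
\mathbb P\{\tau(y,r)\le\kappa r^2\}\le\exp\!\big(\lambda r^2(C_1\kappa-1)+2\lambda^2\kappa r^4\big).
\]

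Choosing $\lambda=\frac1{4\kappa r^2}$ turns the exponent into $\frac{C_1}4-\frac1{8\kappa}$, so $\mathbb P\{\tau(y,r)<\kappa r^2\}\le\e^{C_1/4}\e^{-1/(8\kappa)}$. Taking $\kappa_0\le\min\{1,(4C_1)^{-1}\}$ and $\eta=\tfrac1{16}$ then gives $\mathbb P\{\tau(y,r)<\kappa r^2\}\le\e^{-\eta/\kappa}$ for $\kappa\in(0,\kappa_0)$, as required. The main obstacle is the content of the second paragraph: justifying the Laplacian comparison up to $\partial M$ and, above all, the sign of $\frac{\partial}{\partial\nu}\dist_y$ on the convex boundary, together with the smoothness of $\dist_y$ near $\partial M$; once these are in place, the rest is the routine exponential‑martingale exit estimate.
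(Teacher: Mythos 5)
Your argument is correct and the geometric inputs are exactly the right ones, but the route differs from the paper's in two technical respects. First, the paper does not reduce to small $r$; instead it fixes $\epsilon_0=\tfrac12\min\{1,\epsilon_M\}$, introduces the hitting time $\upsilon$ of the smaller level $\{\dist_y^2=\epsilon_0^2r^2\}$, and bounds $\mathbb P\{\tau(y,r)<\kappa r^2\}$ by $\mathbb P\{\sup_{t\le\kappa r^2}N^y_{t\wedge\upsilon}\ge\epsilon_0^2 r^2\}$; this keeps the stopped process inside $B(y,\epsilon_M)$ for all $r\in(0,1)$ without a preliminary reduction. Your reduction to $r<r_0$ is also valid (at the cost of shrinking $\eta$ and $\kappa_0$ by factors of $r_0^2$), so this is a stylistic rather than substantive difference. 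Second, the paper packages the exponential-supermartingale step by quoting Bernstein's inequality for the local martingale $\Upsilon_t$ (via a quadratic variation bound $\langle\Upsilon,\Upsilon\rangle_t\le 128\eta r^2 t$), while you run the exponential supermartingale argument by hand with $Z_t=\exp(\lambda\psi(X^y_t)-\mu t)$ and optimize over $\lambda$. Both yield the same exponential decay; your version is more self-contained, the paper's is shorter at the price of an external reference.

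The one genuine soft spot you already flag is the content of the second paragraph: the smoothness of $\dist_y^2$ on a uniformly small ball, the Laplacian comparison up to $\partial M$, and the sign $\frac{\partial}{\partial\nu}\dist_y^2\ge0$ on the convex boundary. These are not routine on a manifold with boundary, and the paper devotes Lemma~\ref{lemma dist} to them, proving all three by isometrically embedding $M$ into a closed manifold $N$ in which $e(M)$ is locally strongly convex, then pulling back the distance function, its gradient (to read off the sign of the normal derivative from the fact that minimizing geodesics to boundary points have inward-pointing initial velocity), and the Laplacian comparison from Kasue's theorem on~$N$. Your heuristic (``convexity prevents geodesics from lingering along $\partial M$'') is the right intuition but would need to be replaced by an argument of this type to be rigorous; in particular, the uniformity of the radius $\epsilon_M$ in $y$ is obtained in the paper by a finite-subcover argument on $e(M)$, which your sketch does not address.
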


The proof of the proposition will be based on Lemma~\ref{lemma dist}
below and on Bernstein's inequality related to local martingales. We
should emphasize that $\kappa_0$ and $\eta$ are fully determined by
the manifold~$M$. In particular, they do not depend on the starting
point $y$ of the reflecting Brownian motion $X_t^y$.

Consider the squared distance function
$\dist^2_y(x)=\left(\dist_y(x)\right)^2$ on~$M$. The following lemma
discusses the analytical features of $\dist_y^2(x)$. Generally
speaking, the behavior of the squared distance function on a
manifold with boundary is quite complicated. The
dissertation~\cite{FEW85} offers a series of results on the subject
and a detailed description of relevant literature. The
paper~\cite{SADBRB93} is a slightly more recent reference. In our
particular situation, however, $\dist_y^2(x)$ behaves nicely because
$\partial M$ is assumed to be convex. Many properties of
$\dist_y^2(x)$ resemble those of the squared distance function on a
closed manifold.

\begin{lemma}\label{lemma dist}
Let the boundary $\partial M$ be convex in the sense of~(\ref{def
convex}). There exists a constant $\epsilon_M>0$ independent of $y$
such that the following statements are satisfied:
\begin{enumerate}
\item\label{lem dist st1}
The squared distance function $\dist_{y}^2(x)$ is smooth in $x$ on
the ball~$B(y,\epsilon_M)$.
\item\label{lem dist st2}
The normal derivative $\frac\partial{\partial\nu}\dist^2_{y}(x)$ is
nonnegative for all $x\in B(y,\epsilon_M)\cap\partial M$.
\item\label{lem dist st3}
There is a constant $K_{\epsilon_M}>0$ independent of $y$ such that
$\Delta_M\dist^2_{y}(x)$ is less than or equal to $K_{\epsilon_M}$
for all $x\in B(y,\epsilon_M)$.
\end{enumerate}
\end{lemma}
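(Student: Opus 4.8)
The plan is to deduce all three statements from the following uniform fact, which will be the technical heart of the argument: there is a constant $\epsilon_M>0$, depending only on $M$, such that for every $y\in M$ and every $x\in B(y,\epsilon_M)$ there is a \emph{unique} minimizing geodesic $\gamma_{y,x}\colon[0,\ell]\to M$ from $y$ to $x$, where $\ell=\dist_y(x)$, this $\gamma_{y,x}$ is a genuine geodesic of the Riemannian metric that meets $\partial M$ only at whichever of its two endpoints happen to lie on $\partial M$, and $x$ is not conjugate to $y$ along it. Granting this, Statement~\ref{lem dist st1} is immediate: near such an $x$ one has $\dist_y^2(\cdot)=|\exp_y^{-1}(\cdot)|^2$, and the exponential map of $M$ — defined through the geodesic equation and hence insensitive to $\partial M$ — is a local diffeomorphism near $\exp_y^{-1}(x)$ because $x$ is not conjugate to $y$, so $\dist_y^2$ is smooth on a full neighborhood of $x$, hence on $B(y,\epsilon_M)$. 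Statement~\ref{lem dist st2} follows from the formula $\grad_x\dist_y^2(x)=2\ell\,\gamma_{y,x}'(\ell)$: since $\gamma_{y,x}(t)$ lies in $M$ for $t<\ell$ while $\gamma_{y,x}(\ell)=x\in\partial M$, the terminal velocity cannot point strictly into $M$, that is $\langle\gamma_{y,x}'(\ell),\nu\rangle\ge0$, and therefore $\frac\partial{\partial\nu}\dist_y^2(x)=2\ell\,\langle\gamma_{y,x}'(\ell),\nu\rangle\ge0$.

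To prove the uniform fact I would combine compactness of $M$ with convexity of $\partial M$. Compactness provides, in the usual way, a uniform positive lower bound — over all base points $y$, including those on $\partial M$ — on the distance before conjugate points can occur and before two distinct minimizing geodesics issuing from a common point can meet; taking $\epsilon_M$ below such a bound secures the uniqueness and the absence of conjugate points. Convexity is what forces a minimizing path between two points of $M$ to be an honest geodesic rather than a broken path bending at $\partial M$: were there a corner at a boundary point, replacing the corner by the short geodesic joining its incoming and outgoing points — a geodesic that stays in $M$ precisely because $\partial M$ is convex — would strictly shorten the path, contradicting minimality; and a minimizing path running along a piece of $\partial M$ that fails to be totally geodesic would be strictly longer than the interior chord joining its endpoints, again a contradiction. (Along a totally geodesic stretch of $\partial M$ a minimizing path is already a geodesic of $M$, so there is nothing to rule out.) The same tangency-versus-convexity consideration — a geodesic tangent to the convex $\partial M$ at an interior point leaves $M$ on one side of the tangency unless it lies in $\partial M$ — shows $\gamma_{y,x}$ touches $\partial M$ only at its endpoints.

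The main obstacle is exactly this step: producing $\epsilon_M$ uniformly in $y$ and controlling the distance function in the presence of a boundary that is convex but possibly not totally geodesic; I would lean on the structure theory of the distance function near a convex boundary developed in~\cite{FEW85,SADBRB93} rather than redevelop it. There remains Statement~\ref{lem dist st3}. Since $M$ is compact, its sectional curvature is bounded below, say by $-\kappa^2$ with $\kappa>0$ depending only on $M$, so the Laplacian comparison theorem applies along the minimizing geodesic $\gamma_{y,x}$ — which lies in the interior of $M$ away from its endpoints — and gives $\Delta_M\dist_y(x)\le(n-1)\,\kappa\coth(\kappa\,\dist_y(x))$. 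Combining this with $\Delta_M\dist_y^2=2\dist_y\,\Delta_M\dist_y+2|\grad\dist_y|^2=2\dist_y\,\Delta_M\dist_y+2$ on $B(y,\epsilon_M)$, and using that $s\mapsto s\coth s$ stays bounded on $(0,\kappa\epsilon_M]$, one gets $\Delta_M\dist_y^2(x)\le K_{\epsilon_M}$ on $B(y,\epsilon_M)$ with $K_{\epsilon_M}$ depending only on $n$, on the curvature bound, and on $\epsilon_M$, hence only on $M$. This finishes the proof.
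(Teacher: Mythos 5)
Your route is genuinely different from the paper's. The paper isometrically embeds $M$ into a smooth connected Riemannian manifold $N$ \emph{without boundary} of the same dimension (the existence of such an $N$ is the standard consequence of convexity it cites to~\cite{SK79,MS01}), chooses $\epsilon_M$ by a finite-cover compactness argument so that each $e(B(y,\epsilon_M))$ coincides with $B_N(e(y),\epsilon_M)\cap e(M)$ and is strongly convex in $N$, and then reads off all three statements as properties of $\dist_{N;e(y)}$ on $N$: smoothness and the gradient formula are the usual local facts on a manifold without boundary, Statement~\ref{lem dist st2} follows from the sign of $\langle T_x,\nu\rangle$ at the boundary endpoint of the minimizing segment, and Statement~\ref{lem dist st3} follows from Kasue's Laplacian comparison theorem~\cite{AK82} applied in $N$ with a Ricci lower bound. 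You work intrinsically on $M$: you posit a uniform $\epsilon_M$ below which minimizing geodesics are unique, conjugate-free, and meet $\partial M$ only at endpoints, and deduce the three statements from that. What the embedding buys the paper is that your ``uniform fact'' becomes essentially automatic (strong convexity of $B_N(z,\epsilon_1(z))\cap e(M)$ is exactly the uniqueness-within-$e(M)$ you need, and shrinking $\epsilon_M$ below the relevant injectivity radii of $N$ kills conjugate points), and, more importantly, the local differential-geometric tools you invoke are then applied on a manifold \emph{without} boundary, where they are entirely standard. Your intrinsic version has to justify these on $M$ itself, and that is where some care is being elided: for $y\in\partial M$ the exponential map $\exp_y$ is a priori defined only on a half-space of $T_yM$, so ``defined through the geodesic equation and hence insensitive to $\partial M$'' is really an appeal to extending the metric past $\partial M$ --- which is the embedding in disguise --- and the same remark applies to your use of the Laplacian comparison theorem, which is stated for manifolds without boundary. (A small slip: you ask for a sectional-curvature lower bound where Ricci is what the comparison theorem uses; harmless, since the former implies the latter, but the paper works directly with Ricci.) None of this is fatal, and deferring the uniform fact to~\cite{FEW85,SADBRB93} is a legitimate alternative to the paper's deferral of the embedding to~\cite{SK79,MS01}; but the embedding is doing genuine work in the paper's argument, and your sketch would need to surface the metric-extension step explicitly rather than treat it as cost-free.
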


\begin{proof}
Our reasoning will be based on embedding $M$ isometrically into a
smooth connected Riemannian manifold $N$ without boundary. Let us
introduce some notation and state a definition. We write
$\dist_{N;z'}(z)$ for the distance from $z'\in N$ to $z\in N$ with
respect to the Riemannian metric on $N$. Accordingly, $B_N(z',r)$
denotes the ball $\{z\in N\,|\,\dist_{N;z'}(z)<r\}$ in $N$ of radius
$r>0$. A set $Q\subset N$ is said to be strongly convex in $N$ if
every pair of distinct points from $Q$ can be joined by a unique (up
to parametrization) minimizing geodesic segment that lies in $Q$.

The first step of the proof is to specify the constant $\epsilon_M$.
Afterwards, we will establish Statements~\ref{lem dist st1},
\ref{lem dist st2}, and~\ref{lem dist st3} for this constant. There
exist a smooth connected Riemannian manifold $N$ without boundary
and a mapping $e$ from $M$ to $N$ such that the following
requirements are satisfied:
\begin{itemize}
\item
The dimension $\dim N$ is equal to $\dim M$.
\item
The mapping $e$ is an isometric embedding.
\item
For every $z\in e(M)$, there is a number $\epsilon_1(z)>0$ such that
the set $B_N(z,\epsilon_1(z))\cap e(M)$ is strongly convex in $N$.
\end{itemize}
The existence of $N$ and $e$ is a standard consequence of~(\ref{def
convex}); see, for instance,~\cite{SK79} and also~\cite{MS01}. In
identifying~$\epsilon_M$, it will be more convenient for us to work
with the image $e(M)$ than with $M$ itself.

We need to state one basic fact about the manifold $N$. Namely,
given a point $z'\in N$, there is a number $\epsilon_2(z')>0$ such
that the following requirement is fulfilled: For every $z''\in
B_N(z',\epsilon_2(z'))$, the inverse exponential map
$\exp^{-1}_{z''}$ is a diffeomorphism from $B_N(z',\epsilon_2(z'))$
onto $\exp^{-1}_{z''}(B_N(z',\epsilon_2(z')))$. It is easy to see
that $\dist_{N;z''}(z)$ is smooth in $z$ on the set
$B_N(z',\epsilon_2(z'))\setminus\{z''\}$, and so is the function
$\dist^2_{N;z''}(z)=\left(\dist_{N;z''}(z)\right)^2$ on the ball
$B_N(z',\epsilon_2(z'))$. This concludes the preparations we had to
make before identifying $\epsilon_M$.

Set $\epsilon_M(z)$ equal to $\frac12\min\{\epsilon_1(z),
\epsilon_2(z)\}$ for $z\in e(M)$. The open cover
$\left(B_N(z,\epsilon_M(z))\right)_{z\in e(M)}$ of the compact set
$e(M)$ has a finite subcover
$\left(B_N(z_i,\epsilon_M(z_i))\right)_{i=1,\ldots,m}$. Define
\begin{equation}\label{epsilon M}\epsilon_M=
\min\left\{\epsilon_M(z_1),\ldots,\epsilon_M(z_m)\right\}.\end{equation}
For every point $z\in e(M)$, there is an index $i$ between~1 and $m$
such that the formulas
\begin{align}\label{dist inclusion} B_N(z,\epsilon_M)&\subset
B_N(z_i,\epsilon_1(z_i)), \nonumber \\ B_N(z,\epsilon_M)&\subset
B_N(z_i,\epsilon_2(z_i))
\end{align}
hold true. This obvious fact plays an important role in our further
arguments.

We will now prove Statement~\ref{lem dist st1} of the lemma for the
constant $\epsilon_M$ specified by~(\ref{epsilon M}). Using the
first inclusion in~(\ref{dist inclusion}), one can show that the
image $e(B(y,\epsilon_M))$ coincides with $B_N(e(y),\epsilon_M)\cap
e(M)$. Furthermore, the equality
\begin{equation}\label{dist eq}
\dist_{y}(x)=\dist_{N;e(y)}(e(x)),\qquad x\in
B(y,\epsilon_M),\end{equation} is satisfied. The second inclusion
in~(\ref{dist inclusion}) implies that the function
$\dist^2_{N;e(y)}(z)$ is smooth on the ball $B_N(e(y),\epsilon_M)$.
The embedding $e$ is a diffeomorphism onto its image. As a result,
$\dist^2_{y}(x)$ must be smooth on $e^{-1}(B_N(e(y),\epsilon_M))$.
Since $e(B(y,\epsilon_M))$ coincides with $B_N(e(y),\epsilon_M)\cap
e(M)$, the preimage $e^{-1}\left(B_N(e(y),\epsilon_M)\right)$ is
equal to $B(y,\epsilon_M)$. Hence the desired smoothness
of~$\dist^2_{y}(x)$.

Let us establish Statement~\ref{lem dist st2} of the lemma. In order
to prove the nonnegativity of
$\frac\partial{\partial\nu}\dist^2_{y}(x)$, we need to compute the
gradient $\grad\dist^2_{y}(x)$. Formula~(\ref{dist eq}) yields
\begin{equation}\label{grad dist sq}\grad\dist^2_{y}(x)=
(\mathrm{d} e)^{-1}
\left.\left(\grad\dist_{N;e(y)}^2(z)\right)\right|_{z=e(x)}\end{equation}
when $x\in B(y,\epsilon_M)$. Our next step is to identify
$\grad\dist_{N;e(y)}^2(z)$ on the image of the ball
$B(y,\epsilon_M)$ under the embedding~$e$.

As stated above, $e(B(y,\epsilon_M))$ is equal to
$B_N(e(y),\epsilon_M)\cap e(M)$. By virtue of the first inclusion
in~(\ref{dist inclusion}), this fact implies the existence of an
index $i$ between~1 and $m$ such that $e(B(y,\epsilon_M))$ is
contained in $B_N(z_i,\epsilon_1(z_i))\cap e(M)$. The latter set is
strongly convex in $N$. In consequence, the following property must
hold: For every $z\in e(B(y,\epsilon_M))\setminus e(\{y\})$, there
is a minimizing geodesic segment $\gamma_z(s)$ that starts at~$z$,
ends at~$e(y)$, and lies in $B_N(z_i,\epsilon_1(z_i))\cap e(M)$.
This segment is unique up to parametrization.

Let $\Gamma_z$ denote the vector $\frac{\mathrm{d}}{\mathrm{d}
s}\gamma_z(s)|_{s=0}$ tangent to $\gamma_z(s)$ at the point $z\in
e(B(y,\epsilon_M))\setminus e(\{y\})$. Here and in what follows, we
assume $\gamma_z(s)$ is parametrized by arc length. It is
well-understood that $\grad\dist_{N;e(y)}(z)$ must coincide with
$-\Gamma_z$. This fact yields the formula
\begin{equation}\label{grad is der}\grad\dist^2_{N;e(y)}(z)=-2\dist_{N;e(y)}(z)\Gamma_z.\end{equation}
We emphasize that~(\ref{grad is der}) holds when $z$ lies in
$e(B(y,\epsilon_M))\setminus e(\{y\})$.

Given $x\in B(y,\epsilon_M)\setminus\{y\}$, define the curve segment
$\tau_x(s)$ in the manifold $M$ by setting
$\tau_x(s)=e^{-1}(\gamma_{e(x)}(s))$. The notation $\mathrm T_x$
refers to the vector $\frac{\mathrm{d}}{\mathrm{d}
s}\tau_x(s)|_{s=0}$ tangent to $\tau_x(s)$ at the point $x$. We have
the equality $\mathrm T_x=(\mathrm{d} e)^{-1}\Gamma_{e(x)}$.
Together with~(\ref{dist eq}), (\ref{grad dist sq}), and~(\ref{grad
is der}), this implies
\[\grad\dist^2_{y}(x)=-2\dist_{y}(x)\mathrm T_x.\]
If $x$ lies in $\partial M$, then the vector $\mathrm T_x$ satisfies
$\left<\mathrm T_x,\nu\right>\le0.$ Consequently,
\[\left<\grad\dist^2_{y}(x),\nu\right>\ge0.\]
Our arguments prove this for
$x\in(B(y,\epsilon_M)\setminus\{y\})\cap\partial M$. If $y$ belongs
to $\partial M$, then the estimate can be extended to $y$ by
continuity. Hence the desired nonnegativity of
$\frac\partial{\partial\nu}\dist^2_{y}(x)$.

We will now establish Statement~\ref{lem dist st3} of the lemma. A
calculation based on~(\ref{dist eq}) shows that
\begin{equation}\label{DM dist DN dist}
\Delta_M\dist^2_{y}(x)=\left.\Delta_N\dist^2_{N;e(y)}(z)\right|_{z=e(x)}
\end{equation}
when $x\in B(y,\epsilon_M)$. Here, $\Delta_N$ denotes the
Laplace-Beltrami operator on~$N$. Our intension is to estimate the
right-hand side of Eq.~(\ref{DM dist DN dist}) using Theorem~(2.28)
in~\cite{AK82}; cf.~\cite[Section~3.4]{EH02b}.

Let us lay down a few preliminary facts. Consider a point $z\in
e(B(y,\epsilon_M))\setminus e(\{y\})$. As proved above, one can join
$z$ with $e(y)$ by the minimizing geodesic segment $\gamma_z(s)$. We
should point out that this segment is entirely contained in $e(M)$.
It is convenient to assume $\gamma_z(s)$ is parametrized by arc
length. Choose a constant $K>0$ satisfying the formula
\[
-(\dim M-1)K^2\le\inf\Ric(X,X).
\]
The infimum is taken over all the vectors $X\in TM$ with
$\left<X,X\right>=1$. It is finite because $M$ is compact. The
following assertion is easy to verify: At every point of the segment
$\gamma_z(s)$, the Ricci curvature of $N$ in the direction
$\frac{\mathrm{d}}{\mathrm{d} s}\gamma_z(s)$ is greater than or
equal to $-(\dim M-1)K^2$.

We are now ready to estimate $\Delta_N\dist^2_{N;e(y)}(z)$ by means
of Theorem~(2.28) in~\cite{AK82}. Note that the manifold $N$ is not
necessarily complete. Therefore, it is essential to take account of
the remark following Theorem~(2.31) in~\cite{AK82}. As mentioned in
the previous paragraph, one can join $z$ with $e(y)$ by the segment
$\gamma_z(s)$. The Ricci curvature of $N$ in certain directions is
bounded below by $-(\dim M-1)K^2$. With these facts at hand,
Theorem~(2.28) from~\cite{AK82} implies
\begin{align}\label{comp thm ineq}
\Delta_N&\dist^2_{N;e(y)}(z) \nonumber \\ &\le2(\dim
N-1)K\dist_{N;e(y)}(z) \coth\left(K\dist_{N;e(y)}(z)\right)+2.
\end{align}
We emphasize that~(\ref{comp thm ineq}) holds when $z$ belongs to
$e(B(y,\epsilon_M))\setminus e(\{y\})$. For a relevant inequality,
see~\cite[Corollary~3.4.4]{EH02b}.

Only a few simple remarks are now needed to finish the proof. Note
that the function $\dist_{N;e(y)}(z)$ takes its values in the
interval $(0,\epsilon_M)$ when $z$ varies through
$e(B(y,\epsilon_M))\setminus e(\{y\})$. Define the constant
$K_{\epsilon_M}>0$ by setting
\[
K_{\epsilon_M}=2(\dim N-1)K\sup_{r\in(0,\epsilon_M)}(r\coth(Kr))+2.
\]
In view of~(\ref{DM dist DN dist}) and~(\ref{comp thm ineq}), we
must have
\[
\Delta_M\dist^2_{y}(x)\le K_{\epsilon_M},\qquad x\in
B(y,\epsilon_M)\setminus\{y\}.
\]
This estimate extends to the point $y$ by continuity. Hence the
desired result.  \end{proof}

\begin{remark}
If $M$ were a closed manifold, then Statements~\ref{lem dist st1}
and~\ref{lem dist st3} of Lemma~\ref{lemma dist} would hold for
every constant $\epsilon_M$ less than or equal to the injectivity
radius of~$M$; see, for example,~\cite[Section~3.4]{EH02b}.
\end{remark}

We are now ready to establish Proposition~\ref{proposition exit
time}. Our line of reasoning is borrowed from~\cite[Proof of
Lemma~4.1]{MARBAT02}. In particular, we make use of Bernstein's
inequality related to local martingales.

\begin{proof}[Proof of Proposition~\ref{proposition exit time}.]
Introduce the process $N^y_t=\dist_y^2(X^y_t)$. Fix a constant
$\epsilon_M>0$ satisfying Statements~\ref{lem dist st1},~\ref{lem
dist st2}, and~\ref{lem dist st3} of Lemma~\ref{lemma dist}. Denote
$\epsilon_0=\frac12\min\{1,\epsilon_M\}$. Given a number
$r\in(0,1)$, consider the hitting time
\[\upsilon=\inf\left\{t\ge0\,\left|\,N_t^y=\epsilon_0^2r^2\right.\right\}.\]
It is easy to see that
\begin{align}\label{ext tm aux3}
\mathbb P\left\{\tau(y,r)<\kappa r^2\right\}&=\mathbb
P\left\{\sup_{t\in[0,\kappa r^2)} N_t^y>r^2\right\} \nonumber \\
&\le\mathbb P\left\{\sup_{t\in[0,\kappa r^2]}
N_{t\wedge\upsilon}^y\ge\epsilon_0^2r^2\right\}
\end{align}
for all $\kappa\in(0,\infty)$. Let us estimate the rightmost
probability in this formula.

By virtue of~(\ref{Ito u_t}) and~(\ref{lift of operators}), the
process $N_{t\wedge\upsilon}^y$ satisfies
\begin{equation}\label{ext tm aux1}
N_{t\wedge\upsilon}^y=\Upsilon_t+\frac12\int_0^{t\wedge\upsilon}\Delta_M\dist^2_y\left(X_s^y\right)\mathrm{d}
s
-\int_0^{t\wedge\upsilon}\frac{\partial}{\partial\nu}\dist^2_y\left(X_s^y\right)\mathrm{d}
L_s.
\end{equation}
The notation $\Upsilon_t$ refers to the local martingale
\[
\sum_{i=1}^n\int_0^{t\wedge\upsilon}(\mathcal
H_il)\left(u_s^Y\right)\mathrm{d} B_s^i
\]
with $l(u)=\dist^2_y(\pi(u))$ when $u\in O(M)$. Lemma~\ref{lemma
dist} implies that the second term in the right-hand side
of~(\ref{ext tm aux1}) is bounded above by $\frac12K_{\epsilon_M}t$
and the third term is nonnegative. As a consequence, the estimate
\begin{equation}\label{ext tm aux2}
\mathbb P\left\{\sup_{t\in[0,\kappa r^2]}
N^y_{t\wedge\upsilon}\ge\epsilon_0^2r^2\right\}\le\mathbb
P\left\{\sup_{t\in[0,\kappa r^2]}
\left(\Upsilon_t+\frac12K_{\epsilon_M}t\right)\ge\epsilon_0^2r^2\right\}
\end{equation}
holds for all $\kappa\in(0,\infty)$.

We now set $\kappa_0=\frac{\epsilon_0^2}{K_{\epsilon_M}}$ and assume
$\kappa\in(0,\kappa_0)$. Then the probability in the right-hand side
of~(\ref{ext tm aux2}) cannot exceed
\[
\mathbb P \left\{\sup_{t\in[0,\kappa
r^2]}\Upsilon_t\ge\frac12\epsilon_0^2r^2\right\}.
\]
Define $\eta=\frac{\epsilon_0^2}{32}$. As a computation shows, the
quadratic variation $\langle\Upsilon,\Upsilon\rangle_t$ of the local
martingale $\Upsilon_t$ satisfies
\begin{align*}\langle\Upsilon,\Upsilon\rangle_t&=\sum_{i=1}^n\int_0^{t\wedge
\upsilon}(\mathcal H_il)^2\left(u_s^Y\right)\mathrm{d} s \\
&=4\int_0^{t\wedge \upsilon}\dist^2_y(X_s^y)\,\mathrm{d}
s\le4\epsilon_0^2r^2t=128\eta r^2t.
\end{align*} In accordance with Bernstein's
inequality (see, for example,~\cite[Exercise~(3.16) in
Chapter~IV]{DRMY99}), the fact that
$\langle\Upsilon,\Upsilon\rangle_t$ cannot exceed $128\eta r^2t$
implies
\[
\mathbb P \left\{\sup_{t\in[0,\kappa
r^2]}\Upsilon_t\ge\frac12\epsilon_0^2r^2\right\}=\mathbb P
\left\{\sup_{t\in[0,\kappa r^2]}\Upsilon_t\ge16\eta
r^2\right\}\le\e^{-\frac{\eta}{\kappa}}.
\]
Combining this estimate with~(\ref{ext tm aux3}) and~(\ref{ext tm
aux2}) completes the proof.  \end{proof}

Proposition~\ref{proposition exit time} may be important to the
further development of the probabilistic approach to the Yang-Mills
heat equation on manifolds with boundary. In particular, this result
helps us obtain the following estimate for the curvature of the
time-dependent connection $\nabla(t)$ discussed in Section~\ref{sec
YMH}. As before, we deal with the reflecting Brownian motion $X_t^y$
starting at the point $y$. The connection $\nabla(t)$ solves
Eq.~(\ref{YM heat eq}) with the boundary conditions~(\ref{relative
BC}) or~(\ref{absolute BC}).

\begin{theorem}\label{thm a priori}
Let $\partial M$ be totally geodesic. Suppose either
Assumption~\ref{assu curv bndy} of Theorem~\ref{theorem LYH no crv}
or Assumptions~\ref{assu Ricci par} and~\ref{assu nneg sect} of
Theorem~\ref{theorem LYH crv} are fulfilled for $M$. Then there
exist constants $\xi_1>0$ and $\theta>0$, a function $\sigma(s)$ on
$(0,\infty)$, and a function $v(s)$ on $(0,1)$ that depend on
nothing but $M$ and satisfy the following statements:
\begin{enumerate}
\item
The values of $\sigma(s)$ and $v(s)$ lie in $(0,1)$ and
$(0,\infty)$, respectively.
\item
The function $\sigma(s)$ is non-increasing, while $v(s)$ is
non-decreasing.
\item
Given $s_0\in(0,T)$, $a\in(0,1]$, and
$s\in\left(0,\min\left\{\sigma\left(a^{-1}\YM(0)\right),s_0\right\}\right]$,
if
\[
s^2\mathbb
E\left(\left|R^{\nabla(s_0-s)}(X^y_s)\right|_E^2\right)\le a\xi_1,
\]
then the estimate
\begin{equation}\label{a priori est}
\sup_{t\in\left[s_0-(v(s))^2,s_0\right]}\sup_{x\in\bar
B(y,v(s))}\left|R^{\nabla(t)}(x)\right|_E^2\le\frac{a\theta}{(v(s))^4}
\end{equation}
holds true.
\end{enumerate}
\end{theorem}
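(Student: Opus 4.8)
The plan is to treat Theorem~\ref{thm a priori} as an $\epsilon$-regularity statement for the Yang--Mills heat flow and to establish it by the probabilistic mechanism already used for Theorems~\ref{thm YMH dim23}--\ref{thm YMH dim5+}, now \emph{localized} by means of the exit time bound of Proposition~\ref{proposition exit time}. Write $q(t,x)=\left|R^{\nabla(t)}(x)\right|_E^2$, so that in the notation of Section~\ref{sec YMH} the hypothesis reads $\zeta^{s_0,y}(s)=\mathbb E\!\left(q(s_0-s,X^y_s)\right)\le a\xi_1/s^2$. I would take $v(s)$ to be a small fixed multiple of $\sqrt s$ (so that $v(s)^2\le s\le s_0$, $v(s)\in(0,\infty)$, and the parabolic cylinder $[s_0-v(s)^2,s_0]\times\bar B(y,v(s))$ sits well inside a slightly larger parabolic cylinder $P\subset(0,T)\times M$ of parabolic size comparable to $\sqrt s$ based at $(s_0,y)$), and I would reduce \eqref{a priori est} to bounding the scale-invariant worst-point quantity $Q_0:=\sup_P\Psi^2 q$, where $\Psi(t,x)$ is the square of the parabolic distance from $(t,x)$ to the bottom-and-lateral part of $\partial P$. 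This supremum is finite since $q$ is smooth on $(0,T)\times M$ and $P$ is compact; because $\Psi\gtrsim s$ on the smaller cylinder, a bound $Q_0\le C a\xi_1$ with $C$ depending only on $M$ translates at once into \eqref{a priori est} with $\theta$ a suitable multiple of $\xi_1$. The case $\YM(0)=0$ is disposed of immediately: the energy $\YM(t)$ is non-increasing, hence $\YM(t)\equiv0$, so $R^{\nabla(t)}\equiv0$ and both sides of \eqref{a priori est} vanish.

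The first real step is to propagate the smallness of $\zeta^{s_0,y}$ from scale $s$ to all smaller scales. Since $\partial M$ is totally geodesic and either Assumption~\ref{assu curv bndy} of Theorem~\ref{theorem LYH no crv} or Assumptions~\ref{assu Ricci par}--\ref{assu nneg sect} of Theorem~\ref{theorem LYH crv} is in force, Lemma~\ref{lemma monot} applies (this is the only place where the Li--Yau--Hamilton estimate, and therefore the geometric restrictions on $M$ away from $\partial M$, is used). Applying it with $r=s_0$ gives, for $0<t_1\le s$,
\[
\zeta^{s_0,y}(t_1)\le\frac1{t_1^2}\Big(s^2\e^{u(s)}\zeta^{s_0,y}(s)+C_3\,s\,\YM(0)\Big)\le\frac1{t_1^2}\Big(\e^{u(1)}a\xi_1+C_3\,s\,\YM(0)\Big).
\]
Choosing the non-increasing function $\sigma(w)=\min\!\left\{\xi_1/(2C_3w),\tfrac12\right\}$, valued in $(0,1)$, the constraint $s\le\sigma\!\left(a^{-1}\YM(0)\right)$ forces $C_3\,s\,\YM(0)\le\tfrac12a\xi_1$, whence
\[
\zeta^{s_0,y}(t_1)\le\frac{C_4\,a\xi_1}{t_1^2},\qquad t_1\in(0,s],
\]
with $C_4>0$ depending only on $M$. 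This scale-invariant, linear-in-$a\xi_1$ bound is the quantitative engine of the proof.

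The second step is the point-selection argument, run exactly as in the proof of Theorem~\ref{thm YMH dim23}. Pick $(t_*,x_*)\in P$ realizing $Q_0$, let $d_*$ be the parabolic distance with $\Psi(t_*,x_*)=d_*^2$, and write $q_0=q(t_*,x_*)=Q_0/d_*^4$; on the parabolic ball $P'$ of radius $d_*/2$ about $(t_*,x_*)$ one has $q\le 16 q_0$. Running a horizontal reflecting Brownian motion from a frame over $x_*$, stopping its projection $X^{x_*}_t$ at the first exit time $\tau'$ from $\bar B(x_*,d_*/2)$, and applying the It\^o formula \eqref{Ito u_t} together with the differential inequality \eqref{ex23 aux1} (with $\sqrt q\le 4\sqrt{q_0}$ along the stopped path) and Lemma~\ref{lemma Neumann norm} (legitimate because $\partial M$ totally geodesic is convex in the sense of \eqref{def convex}, and $R^{\nabla(t)}$ satisfies \eqref{relative BC} or \eqref{absolute BC}, so the boundary local-time term has the right sign), one obtains, by the argument that yields \eqref{major est pf23} in the proof of Theorem~\ref{thm YMH dim23} now applied to the stopped process,
\[
q_0\le\e^{C_2(1+4\sqrt{q_0})\,\tau}\,\mathbb E\!\left(q\!\left(t_*-(\tau\wedge\tau'),X^{x_*}_{\tau\wedge\tau'}\right)\right),\qquad 0<\tau<\tfrac14 d_*^2.
\]
One then estimates the expectation by splitting on $\{\tau'\le\tau\}$ and $\{\tau'>\tau\}$. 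On $\{\tau'\le\tau\}$ the stopped point still lies in $P'$, where $q\le 16 q_0$, while Proposition~\ref{proposition exit time} (with radius $d_*/2$ and $\kappa=4\tau/d_*^2$) bounds $\mathbb P\{\tau'\le\tau\}$ by $\e^{-\eta d_*^2/(4\tau)}$; choosing $\tau=\kappa_1 d_*^2$ with $\kappa_1>0$ a small constant depending only on $M$ makes this contribution at most $\tfrac12\e^{-C_2(1+4\sqrt{q_0})\tau}q_0$. On $\{\tau'>\tau\}$ one drops the indicator and bounds the remaining expectation by $\zeta^{t_*,x_*}(\tau)$; applying Lemma~\ref{lemma monot} once more, now at $(t_*,x_*)$ with an intermediate time $t_2$ comparable to $s$, and then transferring $\zeta^{t_*,x_*}(t_2)$ to the already-controlled quantity $\zeta^{s_0,y}$ (using $|x_*-y|\lesssim\sqrt s\lesssim\sqrt{t_2}$ and Gaussian two-sided bounds for the Neumann heat kernel, available since $\partial M$ is totally geodesic), one obtains $\zeta^{t_*,x_*}(\tau)\le C_5 a\xi_1/\tau^2$. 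Inserting both bounds and absorbing the $\tfrac12 q_0$ term gives $q_0\le 2\,\e^{C_2(1+4\sqrt{q_0})\tau}C_5 a\xi_1/\tau^2$; since $\tau=\kappa_1 d_*^2$ is at most $\kappa_1 s<1$ the exponential is bounded by a constant, so $q_0\le C_6 a\xi_1/d_*^4$ and hence $Q_0=d_*^4 q_0\le C_6 a\xi_1$. Fixing $v(s)$ as above, $\xi_1$ so that every ``sufficiently small'' requirement is met, and $\theta$ as a multiple of $C_6\xi_1$ yields \eqref{a priori est}, with $v$ non-decreasing and $\sigma$ non-increasing as required.

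The step I expect to be the main obstacle is the control of $\mathbb E\!\left(q\!\left(t_*-(\tau\wedge\tau'),X^{x_*}_{\tau\wedge\tau'}\right)\right)$, and specifically the passage from the reflecting-Brownian-motion average $\zeta^{t_*,x_*}$ based at the selected point $x_*$ to the averages $\zeta^{s_0,y}$ based at $y$ that were controlled in the first step. One has genuine pointwise control of $q$ only on the small ball $\bar B(x_*,d_*/2)$, so the wandering of the path must be tamed by Proposition~\ref{proposition exit time}, and reconciling the two families of averages requires a careful choice of the intermediate time $t_2$ together with precise Gaussian bounds for the Neumann heat kernel on $M$ and the proximity $|x_*-y|\lesssim\sqrt s$. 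Once this estimate is in place, the remaining bookkeeping --- the choice of the constants $\kappa_1,\xi_1,\theta$ and of the functions $\sigma,v$, and the verification of the monotonicity and range conditions --- is routine and parallels the proofs of Theorems~\ref{thm YMH dim23}--\ref{thm YMH dim5+}.
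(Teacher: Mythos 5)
The paper's own ``proof'' of this theorem is not really a proof but a one-paragraph pointer: it says to repeat the argument of Theorem~4.2 of~\cite{MARBAT02}, replacing the Brownian motion by a reflecting Brownian motion, deriving the SDE for the auxiliary process $Y_s$ from Eqs.~(\ref{Ito u_t}) and~(\ref{lift of operators}), and drawing the necessary estimates from Lemmas~\ref{lemma Neumann norm} and~\ref{lemma monot} and Proposition~\ref{proposition exit time}. Your reconstruction is organized around exactly these ingredients --- reflecting BM from the selected point, the It\^o/Bochner submartingale inequality with the local-time term dispatched by Lemma~\ref{lemma Neumann norm}, Lemma~\ref{lemma monot} for scale propagation, Proposition~\ref{proposition exit time} for localization, and the choice of $\sigma$ to absorb the $C_3 s\YM(0)$ error --- so at the level of strategy your plan is faithful to what the paper claims to do, and your first step (propagating $\zeta^{s_0,y}(s)\le a\xi_1/s^2$ to all $t_1\le s$) is correctly executed.

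There are, however, two concrete problems in your second step, and they are precisely the places where the paper gives no detail. First, the split on $\{\tau'\le\tau\}$ versus $\{\tau'>\tau\}$ is circular as you describe it. On $\{\tau'\le\tau\}$ the stopped point lies on $\partial B(x_*,d_*/2)\subset P'$, and on $\{\tau'>\tau\}$ the point $X^{x_*}_\tau$ is in the open ball $B(x_*,d_*/2)$; since $\tau=\kappa_1 d_*^2$ with $\kappa_1\le\tfrac14$, the time $t_*-\tau$ also lies in the time range of $P'$, so in \emph{both} cases $(t_*-(\tau\wedge\tau'),X^{x_*}_{\tau\wedge\tau'})\in P'$ and the integrand is simply $\le 16q_0$. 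Your argument then gives $q_0\le\mathrm{const}\cdot q_0$ and does not close; dropping the indicator on $\{\tau'>\tau\}$ and invoking $\zeta^{t_*,x_*}(\tau)$ is a strict weakening that does not rescue this. The exit time bound must enter the argument in a different way than you have written. Second, the ``transfer'' that you yourself flag as the main obstacle is not handled by two-sided Gaussian bounds: the ratio $g(t_2,z,x_*)/g(t_1,z,y)$ is \emph{not} uniformly bounded in $z$, because the constants in the exponentials of the upper and lower Gaussian bounds differ, so the quotient blows up as $z$ recedes. A uniform comparison of that kind would need a parabolic Harnack step in the forward-time direction, and even then the constant degrades as $t_*\to s_0$ with $d(x_*,y)$ fixed, which is exactly the regime a point-selection argument cannot exclude. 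So while the high-level plan and the list of ingredients are right, the key quantitative chain --- from the smallness of $\zeta^{s_0,y}(s)$ to the smallness of $q_0$ at the selected point --- is not correctly assembled in your sketch, and the paper's own proof does not supply the missing argument either; it must be extracted from the cited proof in~\cite{MARBAT02}.
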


In order to establish Theorem~\ref{thm a priori}, we need to repeat
the arguments from~\cite[Proof of Theorem~4.2]{MARBAT02}. Let us
outline the changes required for these arguments to work on a
manifold with boundary. The Brownian motion $X_t(x^*)$
in~\cite{MARBAT02} must be replaced by a reflecting Brownian motion
starting at~$x^*$. The stochastic differential equation for the
process $Y_s$ in~\cite{MARBAT02} then comes from Eqs.~(\ref{Ito
u_t}) and~(\ref{lift of operators}) of the present paper; cf.~the
proof of Theorem~\ref{thm YMH dim23}. The necessary estimates are
provided by Lemma~\ref{lemma Neumann norm}, Lemma~\ref{lemma monot},
and Proposition~\ref{proposition exit time}. We will not discuss
further details here.

\begin{remark}
Theorem~4.2 in~\cite{MARBAT02} establishes~(\ref{a priori est}) on a
closed manifold. Section~4 of~\cite{MARBAT02} contains a variety of
corollaries of this estimate. Many of those results would likely
generalize to manifolds with boundary by means of our
Theorem~\ref{thm a priori}.
\end{remark}

\begin{remark}
The paper~\cite{MARBAT02} uses its version of~(\ref{a priori est})
to prove that the curvature of the corresponding solution of the
Yang-Mills heat equation does not blow up if the dimension is less
than~4 or if the initial energy is small. Such a line of reasoning
may be inefficient on manifolds with boundary. For example,
Theorems~\ref{thm YMH dim23} and~\ref{thm YMH dim4} cannot be
deduced from Theorem~\ref{thm a priori} because their assumptions
are considerably weaker. The case where $\dim M$ is~5 or higher is
different. It seems likely that a statement similar to
Theorem~\ref{thm YMH dim5+} can indeed be obtained as a consequence
of Theorem~\ref{thm a priori}.
\end{remark}

\section*{Acknowledgements}

I express my profound gratitude to Prof.~Leonard Gross for his
support and numerous productive discussions. I am also thankful to
Prof.~Xiaodong Cao for the helpful conversations about the geometric
side of the paper.


\begin{thebibliography}{99}

\bibitem{SADBRB93}
S.B. Alexander, I.D. Berg, R.L. Bishop, Geometric curvature bounds
in Riemannian manifolds with boundary, Trans. Amer. Math.
Soc.~339~(1993)~703--716.

\bibitem{MARBAT02}
M. Arnaudon, R.O. Bauer, A. Thalmaier, A probabilistic approach to
the Yang-Mills heat equation, J.~Math. Pures
Appl.~81~(2002)~143--166.

\bibitem{MARB82}
M.F. Atiyah, R. Bott, The Yang-Mills equations over Riemann
surfaces, Philos. Trans. Roy. Soc. London
Ser.~A~308~(1982)~523--615.

\bibitem{ZBMHLSCL87}
Z. Bern, M. Halpern, L. Sadun, C.~Taubes, Continuum regularization
of quantum field theory~(II). Gauge theory, Nuclear
Phys.~B~284~(1987)~35--91.

\bibitem{JPBBL81}
J.-P. Bourguignon, H.B. Lawson, Stability and isolation phenomena
for Yang-Mills fields, Comm. Math. Phys.~79~(1981)~189--230.

\bibitem{HDCLN05}
H.-D. Cao, L.~Ni, Matrix Li-Yau-Hamilton estimates for the heat
equation on K\"{a}hler manifolds, Math. Ann.~331~(2005)~795--807.

\bibitem{LGNCpre}
N. Charalambous, L. Gross, The Yang-Mills heat semigroup on
three-manifolds with boundary, in~preparation.

\bibitem{YCCLS94}
Y. Chen, C.-L. Shen, Monotonicity formula and small action
regularity for Yang-Mills flows in higher dimensions, Calc. Var.
Partial Differential Equations~2~(1994)~389--403.

\bibitem{BC_etal07}
B. Chow, S.-C. Chu, D. Glickenstein, C.~Guenther, J.~Isenberg,
T.~Ivey, D.~Knopf, P.~Lu, F.~Luo, L.~Ni, The Ricci flow: techniques
and applications. Part~I: Geometric aspects, American Mathematical
Society, Providence,~RI,~2007.

\bibitem{BC_etal_unfinished}
B. Chow, S.-C. Chu, D. Glickenstein, C.~Guenther, J.~Isenberg,
T.~Ivey, D.~Knopf, P.~Lu, F.~Luo, L.~Ni, The Ricci flow: techniques
and applications. Part~II: Analytic aspects, American Mathematical
Society, Providence,~RI,~2008.

\bibitem{BCRH97}
B. Chow, R.S. Hamilton, Constrained and linear Harnack inequalities
for parabolic equations, Invent. Math.~129~(1997)~213--238.

\bibitem{BCPLLN06}
B. Chow, P. Lu, L. Ni, Hamilton's Ricci flow, American Mathematical
Society, Providence,~RI; Science Press, New~York,~NY,~2006.

\bibitem{SDPK90}
S.K. Donaldson, P.B. Kronheimer, The geometry of four-manifolds,
Clarendon Press, New York,~NY,~1990.

\bibitem{DFKU84}
D.S. Freed, K.K. Uhlenbeck, Instantons and four-manifolds, 2nd
edition, Springer-Verlag, New York,~NY,~1991.

\bibitem{WEG06}
W.E. Gryc, On the holonomy of the Coulomb connection over
3-manifolds with boundary, Ph.D.~Dissertation, Cornell
University,~2006, arXiv:math.DG/0608507.

\bibitem{RH93a}
R.S. Hamilton, A matrix Harnack estimate for the heat equation,
Comm. Anal. Geom.~1~(1993)~113--126.

\bibitem{RH93b}
R.S. Hamilton, Monotonicity formulas for parabolic flows on
manifolds, Comm. Anal. Geom.~1~(1993)~127--137.

\bibitem{EH02a}
E.P. Hsu, Multiplicative functional for the heat equation on
manifolds with boundary, Michigan Math.~J.~50~(2002)~351--367.

\bibitem{EH02b}
E.P. Hsu, Stochastic analysis on manifolds, American Mathematical
Society, Providence,~RI,~2002.

\bibitem{NISW89}
N. Ikeda, S. Watanabe, Stochastic differential equations and
diffusion processes, 2nd edition, North-Holland Publishing Company,
Amsterdam; Kodansha Ltd., Tokyo, 1989.

\bibitem{JJ88}
J. Jost, Nonlinear methods in Riemannian and K\"alerian geometry,
Birkh\"auser Verlag, Basel,~1988.

\bibitem{AK82}
A. Kasue, A Laplacian comparison theorem and function theoretic
properties of a complete Riemannian manifold, Japan. J.
Math.~(N.S.)~8~(1982)~309--341.

\bibitem{SK79}
S. Kronwith, Convex manifolds of nonnegative curvature, J.~Diff.
Geom.~14~(1979)~621--628.

\bibitem{BL85}
H.B. Lawson, The theory of gauge fields in four dimensions, American
Mathematical Society, Providence,~RI,~1985; reprinted~1987.

\bibitem{PLSTY86}
P. Li, S.-T. Yau, On the parabolic kernel of the Schr\"odinger
operator, Acta Math.~156~(1986)~153--201.

\bibitem{AM92}
A. Marini, Dirichlet and Neumann boundary value problems for
Yang-Mills connections, Comm. Pure Appl. Math.~45~(1992)~1015--1050.

\bibitem{JMJS74}
J.W. Milnor, J.D. Stasheff, Characteristic classes, Princeton
University Press, Princeton,~NJ; University of Tokyo Press,
Tokyo,~1974.

\bibitem{HM91}
H. Mori, The Riemannian double of a manifold and eigenspaces of the
Laplacian, Math.~J. Toyama Univ.~14~(1991)~67--80.

\bibitem{LN07}
L. Ni, A matrix Li-Yau-Hamilton estimate for K\"ahler-Ricci flow,
J.~Differential Geom.~75~(2007)~303--358.

\bibitem{Artem}
A. Pulemotov, The Hopf boundary point lemma for vector bundle
sections, Comment. Math. Helv.~83~(2008)~407--419.

\bibitem{JR92}
J. R\r{a}de, On the Yang-Mills heat equation in two and three
dimensions, J.~Reine Angew. Math.~431~(1992)~123--163.

\bibitem{DRIS71}
D.B. Ray, I.M. Singer, $R$-torsion and the Laplacian on Riemannian
manifolds, Adv. Math.~7~(1971)~145--210.

\bibitem{DRMY99}
D. Revuz, M. Yor, Continuous martingales and Brownian motion, 3rd
edition, Springer-Verlag, Berlin, 1999.

\bibitem{LS87}
L.A. Sadun, Continuum regularized Yang-Mills theory,
Ph.D.~Dissertation, University of California, Berkeley,~1987.

\bibitem{MS01}
M. S\'anchez, Geodesic connectedness of semi-Riemannian manifolds,
Nonlinear Anal.~47~(2001)~3085--3102.

\bibitem{JSGS94}
J. \'Sniatycki, G. Schwarz, The existence and uniqueness of
solutions of Yang-Mills equations with bag boundary conditions,
Comm. Math. Phys.~159~(1994)~593--604.

\bibitem{MS94}
M. Struwe, The Yang-Mills flow in four dimensions, Calc. Var.
Partial Differential Equations~2~(1994)~123--150.

\bibitem{JTJS97}
J. Tafel, J. \'Sniatycki, Nonlinear semigroups and the Yang-Mills
equations with the metallic boundary conditions, Comm. Partial
Differential Equations~22~(1997)~49--69.

\bibitem{AT96}
A. Thalmaier, Brownian motion and the formation of singularities in
the heat flow for harmonic maps, Probab. Theory Related
Fields~105~(1996)~335--367.

\bibitem{JW97}
J. Wang, Global heat kernel estimates, Pacific~J.
Math.~178~(1997)~377--398.

\bibitem{JWBRBL95}
J.T. Wloka, B. Rowley, B. Lawruk, Boundary value problems for
elliptic systems, Cambridge University Press, Cambridge,~1995.

\bibitem{FEW85}
F.-E. Wolter, Cut loci in bordered and unbordered Riemannian
manifolds, Ph.D.~Dissertation, Technical University of Berlin,~1985,
www.lems.brown.edu/vision/people/leymarie/Refs/\
\mbox{CompGeom/Wolter.html.}
\end{thebibliography}
\end{document}